\newcommand*{\mailto}[1]{\href{mailto:#1}{\nolinkurl{#1}}}
\newcommand{\arxiv}[1]{\href{http://arxiv.org/abs/#1}{arXiv:#1}}
\newtheorem{theorem}{Theorem}[section]
\newtheorem{definition}[theorem]{Definition}
\newtheorem{lemma}[theorem]{Lemma}
\newtheorem{proposition}[theorem]{Proposition}
\newtheorem{corollary}[theorem]{Corollary}
\newtheorem{remark}[theorem]{Remark}
\theoremstyle{definition}
\newtheorem{example}{Example}
\newcommand{\R}{{\mathbb R}}
\newcommand{\N}{{\mathbb N}}
\newcommand{\Z}{{\mathbb Z}}
\newcommand{\C}{{\mathbb C}}
\newcommand{\U}{U}
\newcommand{\oo}{o}
\newcommand{\ti}{\tilde}
\newcommand{\be}{\begin{equation}}
\newcommand{\ee}{\end{equation}}
\newcommand{\ba}{\begin{array}}
\newcommand{\ea}{\end{array}}
\newcommand{\x}{\mathrm{x}}
\newcommand{\id}{\mathbbm{1}}
\newcommand{\E}{\mathrm{e}}
\newcommand{\I}{\mathrm{i}}
\newcommand{\tr}{\mathrm{tr}}
\newcommand{\loc}{\mathrm{loc}}
\newcommand{\re}{\mathrm{Re}}
\DeclareMathOperator{\ran}{ran}
\newcommand{\floor}[1]{\lfloor#1 \rfloor}
\newcommand{\ledot}{\,\cdot\,}
\newcommand{\redot}{\cdot\,}
\newcommand{\dip}{\upsilon}
\newcommand{\Wr}{\mathsf{w}}
\newcommand{\sig}{\sigma}
\numberwithin{equation}{section}
\begin{document}

\title[Spectral asymptotics]{Spectral asymptotics for canonical systems}

\author[J.\ Eckhardt]{Jonathan Eckhardt}
\address{School of Computer Science \& Informatics\\ Cardiff University\\ Queen's Buildings \\ 
5 The Parade\\ Roath \\ Cardiff CF24 3AA\\ Wales \\ UK}
\email{\mailto{EckhardtJ@cardiff.ac.uk}}
%\urladdr{\url{http://homepage.univie.ac.at/jonathan.eckhardt/}}

\author[A.\ Kostenko]{Aleksey Kostenko}
\address{Faculty of Mathematics\\ University of Vienna\\
Oskar-Morgenstern-Platz 1\\ 1090 Wien\\ Austria}
\email{\mailto{duzer80@gmail.com};\mailto{Oleksiy.Kostenko@univie.ac.at}}
\urladdr{\url{http://www.mat.univie.ac.at/~kostenko/}}

\author[G.\ Teschl]{Gerald Teschl}
\address{Faculty of Mathematics\\ University of Vienna\\
Oskar-Morgenstern-Platz 1\\ 1090 Wien\\ Austria\\ and International
Erwin Schr\"odinger
Institute for Mathematical Physics\\ Boltzmanngasse 9\\ 1090 Wien\\ Austria}
\email{\mailto{Gerald.Teschl@univie.ac.at}}
\urladdr{\url{http://www.mat.univie.ac.at/~gerald/}}

\thanks{J.\ Reine Angew.\ Math.\ {\bf 736}, 285--315 (2018)}
\thanks{{\it Research supported by the Austrian Science Fund (FWF) under Grants No.\ J3455, P26060, and Y330}}

\keywords{Canonical systems, spectral asymptotics, Weyl--Titchmarsh function}
\subjclass[2010]{Primary 34B20, 47B15; Secondary 47B32, 34B30}

\begin{abstract}
Based on continuity properties of the de Branges correspondence, we develop a new approach to study the high-energy behavior of Weyl--Titch\-marsh and spectral functions of $2\times2$ first order canonical systems. Our results improve several
classical results and solve open problems posed by previous authors. Furthermore, they are applied to radial Dirac and radial Schr\"odinger operators as well as to Krein strings and generalized indefinite strings. 
\end{abstract}

\maketitle

\section{Introduction}
\label{sec:int}

The $m$-function or Weyl--Titchmarsh function, introduced by H.\ Weyl in \cite{weyl} and given much more prominence by E.\ C.\ Titchmarsh \cite{tit} and K.\ Kodaira \cite{ko}, plays a fundamental role in the spectral theory for Sturm--Liouville operators. In particular, it is known that all information about spectral properties of the operator 
\be
 -\frac{d^2}{dx^2} + q(x),
\ee
 acting on $L^2([0,\infty))$, is encoded in this function. In 1952, V.\ A.\ Marchenko proved (see \cite[Theorem 2.2.1]{mar52}) that the $m$-function corresponding to the Dirichlet boundary condition at $x=0$ behaves asymptotically at infinity like the corresponding function of the unperturbed operator, that is,
\be\label{eq:mar}
m(z) = -{\sqrt{-z}}\,(1+\oo(1)),
\ee
as $z\to \infty$ in any nonreal sector in the open upper complex half-plane $\C_+$ (let us stress that the high-energy behavior of $m$ can be deduced from the asymptotic behavior of the corresponding spectral function by using Tauberian theorems for the Stieltjes transform, see, e.g., \cite[Theorem~II.4.3]{ls}). A simple proof of this formula was found by B.\ M.\ Levitan in \cite{lev52} (a short self-contained proof of \eqref{eq:mar} can be found in, e.g., \cite[Lemma~9.19]{tschroe}). An extension of \eqref{eq:mar} to general Sturm--Liouville operators was proposed by I.\ S.\ Kac \cite{ka56, ka71, ka72, ka73} and W.\ N.\ Everitt \cite{ev71}. In particular, it was noticed by I.\ S.\ Kac that the high-energy behavior of the $m$-function of a general Sturm--Liouville operator
\be \label{eqnGenSL}
\frac{1}{w(x)}\left(-\frac{d^2}{dx^2} + q(x)\right)
\ee
depends on the behavior of the weight function $w$ at the left endpoint. Namely, it was shown in \cite{ka73} that 
the Dirichlet $m$-function corresponding to~\eqref{eqnGenSL} satisfies 
\begin{align}\label{eq:kac}
m(z) & = - {d_{\frac{1}{2+\alpha}}}{(-z)^{\frac{1}{2+\alpha}}}(C+\oo(1)), & d_\nu & = \frac{(1-\nu)^{\nu}\Gamma(1-\nu)}{\nu^{1-\nu}\Gamma(\nu)},
\end{align}
as $z\to \infty$ in any nonreal sector in $\C_+$ if 
\be\label{eq:kac2}
\lim_{x\to 0}\,\frac{1}{x^{1+\alpha}}\int_0^x w(t)dt = C^{{2+\alpha}}.
\ee
Here $C>0$ and $\alpha>-1$. The necessity of condition \eqref{eq:kac2} for the validity of \eqref{eq:kac} was proven later by Y.\ Kasahara \cite{ka75} and C.\ Bennewitz \cite{ben} (see also Section \ref{sec:06}). A lot of results on spectral asymptotics for Sturm--Liouville and Dirac operators have been obtained since then due to their numerous applications. For instance, the high-energy behavior of $m$-functions is important in inverse spectral theory \cite{kt2, lev, mar, tschroe} (and the references therein); in the theory of stochastic processes \cite{ka75, ka12, kawa}; in the study of Hardy--Littlewood--P\'olya--Everitt type (HELP) inequalities \cite{Ben87, ben, ev72}; in the similarity problem for indefinite Sturm--Liouville operators \cite{fle14, kkm09, kos13}, etc.  The list of applications as well as the list of references are by no means complete. For further details and historical remarks we refer to the excellent paper by C.\ 
Bennewitz \cite{ben} (see also \cite{ka75, kawa}). 

Our main objective in this article is to prove one-term asymptotic formulas for the $m$-functions of one-dimensional Dirac spectral problems of the form
\be\label{eq:II.01D}
JY' + QY = z H Y
\ee 
on an interval $[0,L)$, where $J=\bigl(\begin{smallmatrix} 0 & -1 \\ 1 & 0 \end{smallmatrix}\bigr)$ and $Q$, $H$ are locally integrable, $2\times2$ real matrix-valued functions on $[0,L)$ so that $Q(x)=Q(x)^*$ and $H(x)=H(x)^*\ge 0$, $H(x)\neq 0$ for almost all $x\in[0,L)$. The analog of the Marchenko formula \eqref{eq:mar} in this case is 
\be\label{eq:mar_D}
m(z) =\I + \oo(1)
\ee
as $z\to \infty$ in any nonreal sector in $\C_+$ (see, e.g., \cite{cg}, \cite[Theorem X.6.4]{ls}). The latter holds whenever $L=\infty$, $H\equiv I_2=\bigl(\begin{smallmatrix} 1 & 0 \\ 0 & 1 \end{smallmatrix}\bigr)$
and $Q\in L^1_{\loc}([0,\infty))$. It was noticed by W.\ N.\ Everitt, D.\ B.\ Hinton and J.\ K.\ Shaw \cite{ehs} that \eqref{eq:mar_D} still remains valid in the sense that
\be\label{eq:ehs}
m(z) =\I \sqrt{\frac{a_0}{c_0}}+ \oo(1)
\ee
whenever 
\be
H = \begin{pmatrix} a & 0 \\ 0 & c \end{pmatrix}, \qquad \lim_{x\to 0}\frac{1}{x}\int_0^x |a(t)-a_0| + |c(t)-c_0| \,dt  =0,
\ee
with some positive constants $a_0$, $c_0>0$. Moreover, the problem was posed to characterize all diagonal matrix-functions $H$ such that the $m$-function satisfies \eqref{eq:ehs}. Some progress was made by H.\ Winkler \cite[Theorem 4.2]{wi} and 
as one of the main results in this article we will solve this problem (see Corollary \ref{cor:4.2} and Theorem \ref{th:3.16}).
Our approach can be seen as a development of the ideas in \cite{ka75} and \cite{ben}. The key role is played by continuity properties of de Branges' correspondence for trace normed canonical systems (see Proposition \ref{prop:dB_corresp}). 

Finally, let us briefly outline the content of the paper. Section \ref{sec:pre} is of preliminary character and it collects basic notions and facts about $2\times 2$ first order canonical systems. Furthermore, in Section \ref{sec:model} we present several prototypical examples when the corresponding $m$-function can be computed explicitly. Our main results are given in Section \ref{sec:03}. Namely we characterize the coefficients of all canonicals systems such that the $m$-functions behave asymptotically at infinity like the $m$-functions considered in Section \ref{sec:model}. In Section \ref{sec:04}, we apply the results of Section \ref{sec:03} to study the high-energy behavior of $m$-functions of radial Dirac operators. This, in particular, allows us to extend the results from \cite{kt2} for radial Schr\"odinger operators to the case of potentials which belong to $H^{-1}_{\loc}([0,\infty))$. In Section \ref{sec:06}, we recover the results of I.\ S.\ Kac \cite{ka73} and Y.\ Kasahara \cite{ka75} on spectral asymptotics for Krein strings. The final Section 
\ref{sec:07} deals with generalized indefinite strings \cite{IndString}, that is, with spectral problems of the form
\be\label{eq:indstr0}
-y'' = z\, \omega\, y + z^2 \dip\, y
\ee
on an interval $[0,L)$ with $L\in(0,\infty]$, a real-valued distribution $\omega \in H^{-1}_{\loc}([0,L))$ and a non-negative Borel measure $\dip$ on $[0,L)$. Again, we prove one-term asymptotic formulas for the corresponding $m$-functions. Let us mention that in the special case when $\dip = 0$ and $\omega$ is a signed measure attracted some attention in the past (see, e.g., \cite{am, ben}). The current interest in spectral problems of the form \eqref{eq:indstr0} is motivated by the study of various nonlinear wave equations for which \eqref{eq:indstr0} serves as an isospectral problem under the corresponding nonlinear flow (see \cite{ConservMP, IndString}). Note that our approach enables us to obtain a complete characterization of coefficients $\omega$ and $\dip$ such that the corresponding $m$-function has a certain asymptotic behavior at infinity (for instance, \eqref{eq:mar_D}). 

The supplementary Appendix \ref{app:osv} collects basic information on the classes of regularly, rapidly and slowly varying functions.

% % % % % % % % % % % % % % % % % % % % % % %
\section{First order canonical systems}\label{sec:cansys}
% % % % % % % % % % % % % % % % % % % % % % %

\subsection{Preliminaries}\label{sec:pre}

In this section, the main object under consideration is the two-dimensional canonical system
\be\label{eq:II.01}
JY' = z HY
\ee
on a finite or infinite interval $[0,L)$, where $z$ is a complex spectral parameter,  
\begin{align}\label{eq:II.02}
J & =\begin{pmatrix} 0 & -1\\ 1 & 0 \end{pmatrix}, &
H(x) & =\begin{pmatrix} a(x) & b(x) \\ b(x) & c(x) \end{pmatrix}, \quad x\in[0,L),  
\end{align}
and the coefficients $a$, $b$ and $c$ of $H$ are real-valued, locally integrable functions on $[0,L)$ so that $H(x)=H(x)^\ast\ge 0$ and $H(x)\not=0$ for almost all $x\in [0,L)$. We assume that the limit-point case prevails at the endpoint $L$, that is,  
\be\label{eq:tr=infty}
 \int_0^L\tr\, H(x)dx=\infty,
\ee 
and furthermore, exclude the case when $b(x)=c(x)=0$ for almost all $x\in[0,L)$. A function $H$ with these properties is called a Hamiltonian on $[0,L)$. 

Associated with the problem~\eqref{eq:II.01} is the Hilbert space $L^2([0,L);H)$, which consists of all (equivalence classes of) vector-valued functions $f:[0,L)\to \C^2$  so that 
\be
\int_0^L f(x)^\ast H(x)f(x) dx <\infty.
\ee
We omit the description of a linear relation in $L^2([0,L);H)$ associated with \eqref{eq:II.01} and its spectral properties. Instead we refer the interested reader to \cite{ka83, lema, ro14}.

Now consider the fundamental matrix solution $\U$ of the canonical system \eqref{eq:II.01},
\begin{align}\label{eq:II.05}
\U(z,x) & = \begin{pmatrix} 
\theta_1(z,x) & \phi_1(z,x) \\
\theta_2(z,x) & \phi_2(z,x)
\end{pmatrix}, & \U(z,0) & = I_2.
\end{align}
The assumptions made imply that 
%\eqref{eq:II.01} is regular at $x=0$ and limit point at $x=L$. In particular, this means that 
 for every $z\in\C\backslash\R$ there is a unique (up to a scalar multiple) solution $\Psi(z,\redot)$ to \eqref{eq:II.01} which belongs to $L^2([0,L);H)$. This allows us to introduce the Weyl function ($m$-function or Weyl--Titchmarsh function) $m:\C\backslash\R\to \C$ of the canonical system \eqref{eq:II.01} by requiring that 
\be\label{eq:II.06}
 \U(z,\redot)\begin{pmatrix} 1 \\ m(z) \end{pmatrix}
 \in L^2([0,L); H).
\ee
 The function $m$ is a Herglotz--Nevanlinna function, $m\in (N)$, which means that $m$ is holomorphic in $\C\backslash\R$, maps $\C_+$ into its closure and satisfies $m(z)^* = m(z^*)$. It is known that $m$ thus admits the integral representation
\be\label{eq:II.07}
m(z) = c_1 + c_2z + \int_{\R}\Big(\frac{1}{t-z} - \frac{t}{1+t^2}\Big) d\rho(t),\quad z\in\C\backslash\R,
\ee
where $c_1\in \R$, $c_2\ge 0$ and the function $\rho:\R\to\R$ is non-decreasing and such that
\be\label{eq:II.08}
\int_{\R} \frac{d\rho(t)}{1+t^2}<\infty.
\ee
We normalize $\rho$ by requiring it to be left-continuous and to satisfy $\rho(0)=0$. Under this normalization, $\rho$ is unique. It is called the spectral function and the corresponding measure $d\rho$ is called the spectral measure of the canonical system \eqref{eq:II.01}. 

\begin{remark}\label{rem:II.01}
 In view of condition \eqref{eq:tr=infty}, an equivalent definition of the $m$-function is given by
\be\label{eq:II.10}
m(z)= - \lim_{x\to L} \frac{\theta_1(z,x)\xi(z) + \theta_2(z,x)}{\phi_1(z,x)\xi(z) + \phi_2(z,x)},\quad z\in \C\backslash\R.
\ee
Here $\xi$ is an arbitrary fixed Herglotz--Nevanlinna function and the limit in \eqref{eq:II.10} does not depend on $\xi$.
\end{remark}

A description of all possible $m$-functions of canonical systems \eqref{eq:II.01} was obtained by L.\ de Branges \cite{dBii} (see also \cite{wi95}). In order to state this result, we say that the Hamiltonian  $H$ is trace normed if 
\be\label{eq:II.03}
 \tr\, H(x) = a(x) + c(x)=1 
\ee
for almost all $x\in[0,L)$. 

\begin{theorem}[de Branges]\label{th:dB01}
Each Herglotz--Nevanlinna function is the $m$-function of a canonical system \eqref{eq:II.01} with a trace normed Hamiltonian $H$ on $[0,\infty)$. Upon identifying Hamiltonians that coincide almost everywhere, this correspondence is one-to-one.   
\end{theorem}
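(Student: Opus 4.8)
The plan is to treat this as what it is, the classical de Branges inverse spectral theorem, and to argue within the theory of de Branges' Hilbert spaces of entire functions $\mathcal{H}(E)$: I would realize both directions through the correspondence between trace normed Hamiltonians and maximal chains of such spaces, and then recover $H$ from such a chain.

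\emph{From a canonical system to a chain.} First I would observe that \eqref{eq:II.03} together with \eqref{eq:tr=infty} forces $L=\infty$, so that ``on $[0,\infty)$'' is automatic. Fixing a trace normed Hamiltonian $H$, the fundamental solution $\U(\redot,x)$ is entire for each $x$, has $\det\U(z,x)\equiv1$, and — by differentiating $\U^\ast J\U$ and using $H=H^\ast\ge0$ — is $J$-contractive in the sense that $\tfrac{1}{\I}(\U(z,x)^\ast J\U(z,x)-J)\ge0$ for $z\in\C_+$. From its entries one builds a Hermite--Biehler function $E_x$ such that $\mathcal{H}(E_x)$ is, through $\U$, an isometric copy of $L^2([0,x);H)$; the limit point condition then guarantees that each $\mathcal{H}(E_x)$ embeds isometrically into $L^2(d\rho)$ with $\rho$ the spectral function of \eqref{eq:II.07}, that these spaces increase with $x$, and that together they form a maximal totally ordered chain of de Branges subspaces of $L^2(d\rho)$ (in fact the family of all such subspaces).

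\emph{Existence.} Given an arbitrary Herglotz--Nevanlinna $m$ with representation \eqref{eq:II.07}--\eqref{eq:II.08}, I would run this backwards. Writing $m(z)=c_2z+\hat m(z)$, the term $c_2z$ corresponds, via the M\"obius action of an initial transfer matrix, to prepending an indivisible interval $[0,c_2)$ on which $H=\bigl(\begin{smallmatrix}1&0\\0&0\end{smallmatrix}\bigr)$, which reduces matters to the case $c_2=0$, i.e.\ $m$ given by a Cauchy-type integral against $\rho$. The decisive input is then that the de Branges subspaces of $L^2(d\rho)$ form a maximal chain which is \emph{continuous}: it admits a parametrization in which the functions $A(x,\redot)$, $B(x,\redot)$ with $E_x=A(x,\redot)-\I B(x,\redot)$ (suitably normalized) are locally absolutely continuous in $x$, with no jumps other than indivisible intervals. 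Differentiating de Branges' reproducing kernel identities along the chain then produces $J\,\partial_x\bigl(\begin{smallmatrix}A\\B\end{smallmatrix}\bigr)=z\,\wt H\bigl(\begin{smallmatrix}A\\B\end{smallmatrix}\bigr)$ with $\wt H$ locally integrable and $\ge0$; reparametrizing by $\int\tr\wt H$ makes it trace normed, and the nesting of the Weyl disks attached to the chain together with \eqref{eq:II.10} identifies the $m$-function of the resulting system with $m$.

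\emph{Uniqueness and the main obstacle.} If trace normed $H_1,H_2$ on $[0,\infty)$ have the same $m$-function, then by \eqref{eq:II.07} they share the spectral function $\rho$, hence — since that chain is intrinsic to $\rho$ — the same maximal chain of de Branges subspaces of $L^2(d\rho)$; de Branges' uniqueness theorem for chains, whereby $H$ is read off from the evolution of $E_x$ (equivalently of the reproducing kernel $K_x(0,\redot)$) along the trace normalized chain, then forces $H_1=H_2$ almost everywhere. The hard part is precisely the ``continuity/rectifiability'' statement used in the existence step: one must exclude jumps of the chain beyond indivisible intervals, produce a parametrization making $x\mapsto E_x$ absolutely continuous so that a genuine differential equation — not merely a nested family of Weyl disks — emerges, and check that the resulting coefficient matrix is measurable and nonnegative. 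This rests on de Branges' ordering theorem for de Branges subspaces, whose proof is itself substantial; a more elementary but less conceptual route for existence is to approximate $m$ by rational Herglotz functions — for which $H$ is an explicit finite concatenation of indivisible intervals — and to pass to the limit via a compactness argument for the de Branges correspondence. Because either route is long, in the paper the theorem is simply quoted from \cite{dBii, wi95}.
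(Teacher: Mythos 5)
The paper does not prove this theorem; it is quoted from \cite{dBii} (see also \cite{wi95}), so there is no internal argument to measure yours against. Your outline of the classical route through chains of de Branges spaces is the standard one, and you are right to flag the ordering/continuity theorem for the chain as the substantial input. However, as written your argument has a genuine gap that affects both directions: you reduce everything to the spectral function $\rho$, and $\rho$ does not determine $H$. In the uniqueness step you pass from ``$H_1$ and $H_2$ have the same $m$-function'' to ``they share $\rho$'' and then claim the chain of de Branges subspaces of $L^2(d\rho)$, being intrinsic to $\rho$, forces $H_1=H_2$; this would prove that $H\mapsto\rho$ is injective, which is false. The paper's own Example~1 provides a counterexample: by \eqref{eqn31rel} the constants $\zeta_0=\I$ and $\zeta_0=1+\I$ give the distinct trace normed Hamiltonians $a_0=c_0=\tfrac12$, $b_0=0$ and $a_0=\tfrac23$, $b_0=-\tfrac13$, $c_0=\tfrac13$, yet both have $h_0/c_0=1$ and hence, by \eqref{eq:rho_1}, the same spectral function $\rho(t)=t/\pi$. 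The information you discard is exactly the constants $c_1$ and $c_2$ in \eqref{eq:II.07}. The same loss occurs in your existence step, where after splitting off $c_2 z$ you build $H$ from the subspace chain of $L^2(d\rho)$ alone: that construction cannot distinguish $m$ from $m+1$.

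The structural reason is that the measure determines the nested family of de Branges subspaces only as a family of \emph{sets}; each space determines its structure function $E_x=A_x-\I B_x$ only up to the action of a real $2\times2$ matrix of determinant one on $(A_x,B_x)$, and this residual global $\mathrm{SL}(2,\R)$ freedom acts on $m$ by a real M\"obius transformation (replacing $\U$ by $V^{-1}\U V$ turns $H$ into $V^\ast HV$), which can preserve $\rho$ while changing $H$. A correct argument must therefore retain the full function $m$: one anchors the chain of \emph{pairs} $(A_x,B_x)$ by the normalization $\U(z,0)=I_2$ and identifies the chain through the requirement that the associated Weyl disks shrink to $m(z)$ as in \eqref{eq:II.10}, and only then invokes de Branges' uniqueness theorem for chains. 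With that repair, and with the continuity/ordering theorem supplied from \cite{dBii}, your sketch becomes the standard proof.
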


The next result contains a continuity property for de Branges' correspondence in Theorem \ref{th:dB01}. 

\begin{proposition}[\cite{dBii}]\label{prop:dB_corresp}
Let $H$, $H_n$ be trace normed Hamiltonians on $[0,\infty)$ and $m$, $m_n$ be the corresponding $m$-functions for every $n\in\N$. Then the following conditions are equivalent:
\begin{enumerate}[label=\emph{(\roman*)}, ref=(\roman*), leftmargin=*, widest=iiii]
\item \label{itdBcor1} We have  
\be\label{eq:conv}
\int_0^x H_n(t)dt \to \int_0^x H(t)dt
\ee
locally uniformly for all $x\in[0,\infty)$,
\item For every $x\in[0,\infty)$ we have $\U_n(\ledot,x)\to \U(\ledot,x)$ locally uniformly on $\C$.
\item \label{itdBcor3} We have $m_n\to m$ locally uniformly on $\C\backslash\R$.
\end{enumerate}
\end{proposition}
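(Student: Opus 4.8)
The plan is to prove the cycle of implications $(i)\Rightarrow(ii)\Rightarrow(iii)\Rightarrow(i)$, using the integral reformulation of the canonical system as the central tool. First I would rewrite \eqref{eq:II.01} in integral form: a solution $Y$ with $Y(0)=Y_0$ satisfies
\be
Y(x) = Y_0 + J^{-1} z \int_0^x H(t) Y(t)\, dt = Y_0 - z J \int_0^x H(t) Y(t)\, dt.
\ee
Applied to the columns of the fundamental matrix this gives
\be
\U(z,x) = I_2 - z J \int_0^x H(t)\U(z,t)\, dt.
\ee
The key point is that this Volterra equation depends on $H$ only through the primitive $\int_0^\cdot H(t)\,dt$ once one integrates by parts, or more directly: the Picard iteration for $\U_n$ versus $\U$ can be controlled by $\sup_{t\le x}\|\int_0^t (H_n-H)\|$ together with the uniform bound $\int_0^x \tr H_n = \int_0^x \tr H = x$ coming from the trace normalization. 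So for $(i)\Rightarrow(ii)$ I would set up a Gr\"onwall-type estimate: writing the difference $\U_n - \U$ and iterating, one obtains on compact sets in $x$ and $z$ a bound of the form $\|\U_n(z,x) - \U(z,x)\| \le C(z,x)\, \eps_n$ where $\eps_n \to 0$ is the local-uniform error in the primitives and $C(z,x)$ grows at most like $\exp(c|z|x)$; the trace normalization is exactly what makes the constants uniform in $n$. One has to be slightly careful integrating by parts since $\U(z,\cdot)$ is only absolutely continuous, but $\int_0^x H_n \U_n\, dt - \int_0^x H\,\U\,dt$ splits as $\int_0^x H_n(\U_n - \U)\,dt$ plus a term that after integration by parts involves $\int_0^\cdot(H_n - H)$ tested against $\U$ and its derivative $z H \U$, both of which are bounded on compacts.

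For $(ii)\Rightarrow(iii)$ I would use the representation of the $m$-function in Remark~\ref{rem:II.01}, namely
\be
m(z) = -\lim_{x\to L}\frac{\theta_1(z,x)\xi(z)+\theta_2(z,x)}{\phi_1(z,x)\xi(z)+\phi_2(z,x)},
\ee
and the classical fact that for each fixed $x$ the map
\be
w \mapsto -\frac{\theta_1(z,x)w+\theta_2(z,x)}{\phi_1(z,x)w+\phi_2(z,x)}
\ee
sends the closed upper half-plane into a Weyl disk $D(z,x)$, that these disks are nested and shrink to the point $m(z)$ as $x\to L$ (this is where the limit-point hypothesis \eqref{eq:tr=infty} is used). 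Convergence of $\U_n(z,x)\to\U(z,x)$ gives convergence of the disks $D_n(z,x)\to D(z,x)$ for each fixed $x$; combined with the nesting and a diagonal/three-epsilon argument — choose $x$ so that $D(z,x)$ is small, then $n$ large so that $D_n(z,x)$ is close to $D(z,x)$, and note $m_n(z)\in D_n(z,x)$ — one gets $m_n(z)\to m(z)$ pointwise on $\C\setminus\R$, and then local uniformity follows from Vitali's theorem since the $m_n$ are uniformly bounded on compact subsets of $\C_+$ (being Herglotz functions, they are bounded on any compact set away from $\R$ in a way controlled by their value at a single point, which converges).

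The implication $(iii)\Rightarrow(i)$ is the one I expect to be the main obstacle, and the natural route is a compactness-plus-uniqueness argument rather than a direct estimate. The sequence of primitives $x\mapsto\int_0^x H_n(t)\,dt$ consists of monotone (in the matrix sense), $1$-Lipschitz $2\times 2$ matrix-valued functions with trace equal to $x$; by Helly's selection theorem every subsequence has a further subsequence converging locally uniformly to some limit $x\mapsto\int_0^x \widetilde H(t)\,dt$ which is again the primitive of a trace normed Hamiltonian $\widetilde H$. By the already-proved implication $(i)\Rightarrow(iii)$, the corresponding $m$-functions converge to $\widetilde m$, so $\widetilde m = m$; but then de Branges' uniqueness (Theorem~\ref{th:dB01}) forces $\widetilde H = H$ almost everywhere, hence the primitives agree. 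Since every subsequence has a further subsequence converging to the same limit $\int_0^\cdot H$, the whole sequence converges, which is \ref{itdBcor1}. The subtle points here are checking that a locally uniform limit of such primitives is indeed of the required form (monotonicity and the Lipschitz bound pass to the limit, and the trace condition is preserved, so $\widetilde H$ is a genuine trace normed Hamiltonian — one should note the limit-point condition \eqref{eq:tr=infty} is automatic since $\tr\widetilde H\equiv 1$ on $[0,\infty)$) and invoking Theorem~\ref{th:dB01} in exactly the form stated.
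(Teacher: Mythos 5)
The paper offers no proof of Proposition \ref{prop:dB_corresp}: it is quoted from de Branges \cite{dBii} with only Remark \ref{rem:dBth} added, so there is no in-paper argument to compare against. Your outline is the standard proof of this continuity statement and is essentially sound: the Volterra/Gr\"onwall estimate for \ref{itdBcor1} $\Rightarrow$ (ii) (with the integration by parts transferring the error onto the primitives, and the trace normalization supplying the uniform bound $\|H_n\|\le 1$), the nested Weyl disks plus normality of Herglotz functions for (ii) $\Rightarrow$ \ref{itdBcor3}, and Helly compactness plus the uniqueness half of Theorem \ref{th:dB01} for \ref{itdBcor3} $\Rightarrow$ \ref{itdBcor1} are all the right tools, and your identification of the last implication as the one requiring an indirect argument is correct.

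One point deserves more care than you give it. In the step \ref{itdBcor3} $\Rightarrow$ \ref{itdBcor1} you apply the already-proved implication \ref{itdBcor1} $\Rightarrow$ \ref{itdBcor3} to the Helly limit $\widetilde H$, but that implication is only available if $\widetilde H$ is a Hamiltonian in the sense of the paper, which \emph{excludes} $b=c=0$ a.e.; a locally uniform limit of primitives of trace normed Hamiltonians can perfectly well be $x\mapsto\bigl(\begin{smallmatrix} x & 0\\ 0 & 0\end{smallmatrix}\bigr)$, i.e.\ $\widetilde H=\bigl(\begin{smallmatrix} 1 & 0\\ 0 & 0\end{smallmatrix}\bigr)$, for which no finite $m$-function exists (the point $m=\infty$ of the extended de Branges correspondence). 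You must rule this degenerate limit out before invoking \ref{itdBcor1} $\Rightarrow$ \ref{itdBcor3} and Theorem \ref{th:dB01}; this can be done by showing directly (e.g.\ via the Weyl disks of the $\widetilde H$-system, whose intersection over $x$ would then be empty or $\{\infty\}$, or via Remark \ref{rem:II.01}) that convergence of the primitives to this degenerate limit forces $m_{n_k}(z)\to\infty$, contradicting $m_{n_k}(z)\to m(z)\in\C$. A similar, milder caveat applies in (ii) $\Rightarrow$ \ref{itdBcor3}: for small $x$ the set $D(z,x)$ may be a half-plane rather than a disk, so the Hausdorff convergence $D_n(z,x)\to D(z,x)$ should only be asserted for $x$ large enough that $\int_0^x\phi^*H\phi>0$, which suffices for your three-epsilon argument. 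With these repairs the proof is complete.
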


\begin{remark}\label{rem:dBth}
Note that the locally uniform convergence in \eqref{eq:conv} can be replaced by simple pointwise convergence upon employing a compactness argument. Clearly, we only need to show that pointwise convergence in \eqref{eq:conv} implies locally uniform convergence. Since the functions $\int_0^x H_n(t)dt$ are equicontinuous and  uniformly bounded on each compact interval, the Arzel\'{a}--Ascoli compactness criterion implies that this set of functions is pre-compact in $C[0,c]$ for all $c>0$. However, pointwise convergence in \eqref{eq:conv} implies that this sequence has only one possible limit point and hence the convergence in \eqref{eq:conv} is locally uniform.   
\end{remark}

\begin{remark}
Let us mention that \ref{itdBcor1} also implies \ref{itdBcor3} if one drops the trace normalization assumption.
\end{remark}

To complete this subsection, let us consider the spectral problem \eqref{eq:II.01} also without the trace normalization assumption \eqref{eq:II.03}. The next result is an analog of Theorem \ref{th:dB01} (see, e.g., \cite{ekt,wowi}) in this case.

\begin{lemma}\label{th:uniq}
Let $H_1$, $H_2$ be two Hamiltonians on $[0,L_1)$, $[0,L_2)$, respectively, and $m_1$, $m_2$ be the corresponding $m$-functions. Then we have $m_1=m_2$ if and only if there is a locally absolutely continuous bijection $\eta:[0,L_1)\to [0,L_2)$ such that 
\be\label{eq:II.eta}
H_1(x)=\eta'(x)H_2(\eta(x))
\ee
for almost all $x\in[0,L_1)$. 
In particular, if $H_2$ is trace normed, then $\eta' = \tr\, H_1$.
\end{lemma}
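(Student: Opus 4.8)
The plan is to reduce the assertion to de~Branges' uniqueness result, Theorem~\ref{th:dB01}, via the elementary fact that a reparametrization of a Hamiltonian leaves the $m$-function unchanged.

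For the ``if'' direction, let $\eta\colon[0,L_1)\to[0,L_2)$ be a locally absolutely continuous bijection with $H_1(x)=\eta'(x)H_2(\eta(x))$ for a.e.\ $x$; as a continuous bijection between the two half-open intervals, $\eta$ is increasing with $\eta(0)=0$. With $\U_2$ the fundamental matrix of the system with Hamiltonian $H_2$, the function $W(z,x):=\U_2(z,\eta(x))$ is locally absolutely continuous and, by the chain rule for the composition of an absolutely continuous function with a monotone absolutely continuous one, solves $JW'(z,x)=z\,\eta'(x)H_2(\eta(x))W(z,x)=zH_1(x)W(z,x)$ with $W(z,0)=I_2$; hence $\U_1(z,x)=\U_2(z,\eta(x))$. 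The change-of-variables formula for absolutely continuous $\eta$ shows that $f\mapsto f\circ\eta^{-1}$ is unitary from $L^2([0,L_1);H_1)$ onto $L^2([0,L_2);H_2)$, and it maps $\U_1(z,\redot)\bigl(\begin{smallmatrix}1\\ m_1(z)\end{smallmatrix}\bigr)$ to $\U_2(z,\redot)\bigl(\begin{smallmatrix}1\\ m_1(z)\end{smallmatrix}\bigr)$; since the former lies in $L^2([0,L_1);H_1)$, so does the latter in $L^2([0,L_2);H_2)$, and \eqref{eq:II.06} gives $m_2=m_1$.

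For the converse, set $\eta_i(x):=\int_0^x\tr H_i(t)\,dt$ for $i=1,2$. Since $H_i\ge0$ and $H_i\ne0$ a.e.\ one has $\tr H_i>0$ a.e., so $\eta_i$ is a strictly increasing, locally absolutely continuous bijection of $[0,L_i)$ onto $[0,\infty)$, the surjectivity being precisely \eqref{eq:tr=infty}. Using the Banach indicatrix identity $\int_B|\eta_i'|=|\eta_i(B)|$ (valid as $\eta_i$ is injective) together with $\tr H_i>0$ a.e., one verifies that $\eta_i^{-1}$ has the Luzin $(N)$ property and hence, by the Banach--Zaretskii theorem, is locally absolutely continuous, with $(\eta_i^{-1})'=1/(\tr H_i\circ\eta_i^{-1})$ a.e. Consequently $\hat H_i:=(\eta_i^{-1})'\,H_i(\eta_i^{-1}(\redot))$ is a trace normed Hamiltonian on $[0,\infty)$ satisfying $H_i(x)=\eta_i'(x)\hat H_i(\eta_i(x))$ a.e., and by the ``if'' direction already proved $\hat H_i$ has the same $m$-function as $H_i$. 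Thus $\hat H_1$ and $\hat H_2$ are trace normed Hamiltonians on $[0,\infty)$ with the common $m$-function $m_1=m_2$, so Theorem~\ref{th:dB01} forces $\hat H_1=\hat H_2=:\hat H$ a.e. The map $\eta:=\eta_2^{-1}\circ\eta_1\colon[0,L_1)\to[0,L_2)$ is then a locally absolutely continuous bijection, and combining $H_1(x)=\eta_1'(x)\hat H(\eta_1(x))$, $H_2(y)=\eta_2'(y)\hat H(\eta_2(y))$, $\eta_2(\eta(x))=\eta_1(x)$ and the chain rule yields $H_1(x)=\eta'(x)H_2(\eta(x))$ a.e.; taking traces in this relation when $H_2$ is trace normed gives $\eta'=\tr H_1$.

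The conceptual content is therefore just ``reparametrization invariance of $m$ plus de~Branges''. The step I expect to require the most care is the measure-theoretic bookkeeping in the converse: showing that $\eta_i^{-1}$, and then $\eta_2^{-1}\circ\eta_1$, really is locally absolutely continuous and that the chain rule holds almost everywhere for these compositions. This is exactly where the positivity hypotheses $H_i\ge0$, $H_i\ne0$ a.e.\ (equivalently $\tr H_i>0$ a.e., not merely $\ge0$) are indispensable; without them $\eta_i^{-1}$ would in general fail to be absolutely continuous.
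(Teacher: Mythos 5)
Your argument is correct and is essentially the standard one: reparametrization invariance of the $m$-function (via $\U_1(z,x)=\U_2(z,\eta(x))$ and the unitary change of variables on the weighted $L^2$ spaces) combined with de Branges' uniqueness theorem for trace normed Hamiltonians, after reducing each $H_i$ to trace normed form through $\eta_i(x)=\int_0^x\tr H_i$. The paper itself gives no proof, deferring to \cite{ekt,wowi}, but those references proceed exactly along these lines; your measure-theoretic care with the local absolute continuity of $\eta_i^{-1}$ (via the Luzin $(N)$ property, using $\tr H_i>0$ a.e.) is precisely the point that needs attention and you handle it correctly.
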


Finally, we will need the following simple fact.

\begin{lemma}\label{lem:scaling}
Let $H$ be a Hamiltonian on $[0,L)$ and $m$ be the corresponding $m$-function. If $r_1$, $r_2$, $r_3$ are fixed positive numbers and the Hamiltonian $H_r$ on $[0,L/r_1)$ is defined by 
\be
 H_r(x)=r_2\begin{pmatrix}
r_3a(r_1x) & b(r_1x)\\
b(r_1x) & \frac{1}{r_3}c(r_1x)
\end{pmatrix}
\ee
for almost all $x\in[0,L/r_1)$, then 
the corresponding $m$-function is given by 
\be
 m_r(z)=r_3\, m\Big(\frac{r_2}{r_1} z\Big),\quad z\in\C\backslash\R.
\ee
\end{lemma}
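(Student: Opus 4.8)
The plan is to factor the passage $H\mapsto H_r$ into three elementary operations and then carry them out simultaneously at the level of fundamental matrices. Conceptually, $H_r$ arises from $H$ by (a) conjugating with the real diagonal matrix $D:=\mathrm{diag}(\sqrt{r_3},1/\sqrt{r_3})$, so that $DH(x)D=\bigl(\begin{smallmatrix} r_3a(x) & b(x)\\ b(x) & c(x)/r_3\end{smallmatrix}\bigr)$; (b) multiplying the resulting Hamiltonian by the positive constant $r_2$; and (c) dilating the independent variable by $r_1$, which in particular shrinks the underlying interval to $[0,L/r_1)$. The crucial algebraic fact is the identity $DJD=J$, equivalently $JD^{-1}=DJ$, which lets $D$ pass through the canonical system harmlessly.

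First I would record that $H_r$ is indeed a Hamiltonian on $[0,L/r_1)$: since $D$ is real, diagonal and invertible, $H_r(x)^\ast=H_r(x)$ and $H_r(x)\ge 0$, with $H_r(x)\neq 0$ precisely when $H(r_1x)\neq 0$; local integrability and the limit-point condition $\int_0^{L/r_1}\tr\,H_r(x)dx=\infty$ follow from the corresponding properties of $H$ via the substitution $y=r_1x$ together with the crude bound $\tr(DHD)\ge\min\{r_3,1/r_3\}\,\tr\,H$; and the degenerate case $b\equiv c\equiv 0$ excluded in the definition is not attained, since the lower-right and off-diagonal entries of $H_r$ vanish exactly where those of $H(r_1\redot)$ do. Next I would establish the fundamental-matrix identity
\be
 \U_r(z,x)=D^{-1}\,\U\Big(\frac{r_2}{r_1}z,\,r_1x\Big)\,D,\qquad x\in[0,L/r_1).
\ee
Both sides equal $I_2$ at $x=0$ by \eqref{eq:II.05}; and differentiating the right-hand side in $x$, using $JD^{-1}=DJ$ and the differential equation \eqref{eq:II.01} for $\U$ with spectral parameter $\tfrac{r_2}{r_1}z$ evaluated at $r_1x$, one checks that it satisfies the canonical system \eqref{eq:II.01} with Hamiltonian $H_r$ and spectral parameter $z$. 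Uniqueness for this initial value problem then gives the identity.

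Finally I would extract $m_r$. Computing the norm of $L^2([0,L/r_1);H_r)$ with the substitution $y=r_1x$ shows that a function $f$ lies in $L^2([0,L/r_1);H_r)$ if and only if $y\mapsto Df(y/r_1)$ lies in $L^2([0,L);H)$. Applying this to the Weyl solution $f=\U_r(z,\redot)\binom{1}{m_r(z)}$ of $H_r$ and inserting the fundamental-matrix identity, the membership condition becomes $\U(\tfrac{r_2}{r_1}z,\redot)\binom{\sqrt{r_3}}{m_r(z)/\sqrt{r_3}}\in L^2([0,L);H)$. By \eqref{eq:II.06} the square-integrable solution of the canonical system for $H$ at the nonreal point $\tfrac{r_2}{r_1}z$ is unique up to a scalar multiple, so comparison with $\U(\tfrac{r_2}{r_1}z,\redot)\binom{1}{m(r_2z/r_1)}$ forces $m_r(z)/\sqrt{r_3}=\sqrt{r_3}\,m(r_2z/r_1)$, i.e.\ $m_r(z)=r_3\,m(\tfrac{r_2}{r_1}z)$, as claimed.

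I expect the only genuine subtlety to be the bookkeeping in the last step: $r_1$ enters both the argument of $H$ and the length of the interval, and one must check that the change of variables in the $L^2$-pairing meshes correctly with the rescaling $z\mapsto\tfrac{r_2}{r_1}z$ of the spectral parameter. A tidier alternative would be to bypass the $L^2$-pairing and use the limit representation \eqref{eq:II.10} for $m$ instead (the one-sided limit at the endpoint transforms in the obvious way under $x\mapsto r_1x$); note, however, that one cannot simply invoke Lemma \ref{th:uniq} for the $r_1$-dilation alone, since $H(r_1\redot)$ on $[0,L/r_1)$ and $H$ on $[0,L)$ genuinely have different $m$-functions.
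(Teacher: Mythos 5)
Your argument is correct and is essentially the paper's proof: your fundamental-matrix identity $\U_r(z,x)=D^{-1}\U(\tfrac{r_2}{r_1}z,r_1x)D$ with $D=\mathrm{diag}(\sqrt{r_3},1/\sqrt{r_3})$ is, entry by entry, exactly the matrix the paper exhibits, and the $m$-function is then read off from \eqref{eq:II.06} in the same way. You merely spell out the verification (via $JD^{-1}=DJ$) and the $L^2$ bookkeeping that the paper leaves implicit.
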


\begin{proof}
It suffices to notice that
\[
 \U_r(z,x)=\begin{pmatrix}
\theta_1(\frac{r_2}{r_1} z,r_1x) & \frac{1}{r_3}\phi_1(\frac{r_2}{r_1} z,r_1x)\\
r_3\theta_2(\frac{r_2}{r_1} z,r_1x) & \phi_2(\frac{r_2}{r_1} z,r_1x)
\end{pmatrix}, \quad x\in[0,L/r_1),~z\in\C, 
\]
is the fundamental matrix solution for \eqref{eq:II.01} with $H_r$ in place of $H$.
\end{proof}

Let us mention that there are a lot of results which connect the asymptotic behavior of the Weyl function $m$ and the corresponding spectral function $\rho$. We need the following result (see Theorem 7.5 in \cite{ben}).

\begin{theorem}[\cite{ben}]\label{th:tauber}
Let $m$ and $\ti{m}$ be Herglotz--Nevanlinna functions with the spectral functions $\rho$ and $\ti{\rho}$, respectively, and let $f$ be a positive function defined for large $r$ so that $f(r)/r$ decreases as $r\to \infty$. If 
\be\label{eq:tauberm}
m(r\mu) = f(r)\ti{m}(\mu)(1+\oo(1)),\quad r\to \infty,
\ee
holds locally uniformly for non-real $\mu$, then
\be\label{eq:tauberr}
\rho(\pm t) = tf(t) (\tilde{\rho}(\pm 1) + \oo(1)), \quad t\to +\infty,
\ee
except if $\ti{\rho}$ has a jump at $0$. In this case, $\rho$ satisfies
\be
\rho(t) - \rho(-t) = tf(t) (\tilde{\rho}(1) - \tilde{\rho}(-1) +\oo(1)),\quad t\to +\infty.
\ee 
 \end{theorem}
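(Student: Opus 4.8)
The final statement to prove is the Tauberian Theorem \ref{th:tauber}, which is attributed to Bennewitz \cite{ben}. The plan is to reduce the statement about $m$ to a statement about the spectral functions $\rho$, $\ti\rho$ via the Stieltjes transform and then invoke a Tauberian theorem for that transform.

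The plan is as follows. First I would recall that, from the integral representation \eqref{eq:II.07}, the imaginary part of a Herglotz--Nevanlinna function evaluated on the imaginary axis is (up to the linear term $c_2 z$, which one checks is compatible with the hypothesis) a Stieltjes-type transform of the measure $d\rho$. Concretely, writing $\mu = \I s$ with $s>0$ and taking imaginary parts in \eqref{eq:II.07} gives
\be
\im m(\I s) = c_2 s + \int_\R \frac{s}{t^2 + s^2}\, d\rho(t),
\ee
so that $\im m(\I s)$ is governed by the symmetric combination of the tails of $\rho$ at $\pm\infty$. The relation \eqref{eq:tauberm} with $r\to\infty$ then translates, after setting $\mu = \I$ so that $r\mu = \I r$, into the asymptotic statement
\be
\int_\R \frac{r}{t^2 + r^2}\, d\rho(t) = f(r)\,\im\ti m(\I)\,(1+\oo(1)),\quad r\to\infty,
\ee
and $\im\ti m(\I)$ is itself a weighted integral against $d\ti\rho$. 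The first key step is thus to make precise how the behavior of $m$ along a ray encodes the behavior of $\rho(\pm t)$; this is a classical Abelian/Tauberian duality.

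The central step is the Tauberian argument. The hypothesis that $f(r)/r$ decreases as $r\to\infty$ is exactly the regularity condition that legitimizes passing from the integrated (transform) asymptotics back to the pointwise asymptotics of $\rho(\pm t)$. I would set this up as a statement about the Stieltjes transform of the measures $d\rho_\pm$ obtained by restricting $d\rho$ to the positive and negative half-lines, and apply the monotone-density form of Karamata's Tauberian theorem, using the machinery of regularly and slowly varying functions recorded in Appendix \ref{app:osv}. The monotonicity of $f(r)/r$ ensures $rf(r)$ is (essentially) regularly varying of a definite index, which is precisely the input Karamata's theorem requires in order to conclude $\rho(\pm t) \sim t f(t)\,\ti\rho(\pm 1)$. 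The split into $\pm$ is handled by noting that the transform sees only the \emph{sum} of the two tails; when $\ti\rho$ has no jump at $0$ the two half-line contributions can be separated (each is individually regularly varying and recoverable), yielding \eqref{eq:tauberr}.

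The main obstacle, and the reason for the dichotomy in the statement, is precisely this separation of the two tails. A jump of $\ti\rho$ at $0$ corresponds to a point mass, which in the limiting transform contaminates the clean $\pm$-splitting, so that only the \emph{difference} $\rho(t)-\rho(-t)$ — equivalently, the combined two-sided object $\rho(t) - \rho(-t) = tf(t)(\ti\rho(1) - \ti\rho(-1) + \oo(1))$ — can be controlled, rather than each of $\rho(t)$ and $\rho(-t)$ separately. I would therefore treat the two-sided symmetric transform as the fundamental object throughout and only perform the one-sided splitting in the generic case where $\ti\rho$ is continuous at $0$; handling the jump case amounts to carefully tracking the contribution of the atom at $0$ through the Tauberian limit. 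The linear term $c_2 z$ in \eqref{eq:II.07}, should it be present, must also be checked for compatibility with \eqref{eq:tauberm}, but since $c_2 s$ grows linearly in $s$ while $f(r)$ is constrained by the decrease of $f(r)/r$, this term either forces $f$ to be asymptotically linear or is negligible, and in either case does not affect the conclusion.
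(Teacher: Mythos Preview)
The paper does not prove this theorem at all: it is simply quoted from Bennewitz \cite{ben} (see the sentence ``see Theorem 7.5 in \cite{ben}'' immediately preceding the statement). So there is no proof in the paper to compare against.

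As for the merits of your sketch on its own, there are two genuine gaps.

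First, your claim that ``the monotonicity of $f(r)/r$ ensures $rf(r)$ is (essentially) regularly varying of a definite index'' is not correct. A function whose ratio to $r$ is decreasing need not be regularly varying; the hypothesis only guarantees that $f(r)/r$ is eventually bounded and decreasing, nothing more. Karamata's theorem in the form you want to invoke requires regular variation as an \emph{input}, and you have not established it. In Bennewitz's actual argument this issue does not arise because regular variation is never assumed for $f$; instead the monotonicity of $f(r)/r$ is used directly as a Tauberian side condition.

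Second, and more seriously, your proposed mechanism for separating $\rho(t)$ from $\rho(-t)$ does not work. Evaluating only at $\mu=\I$ gives
\[
\im m(\I r) = c_2 r + \int_\R \frac{r}{t^2+r^2}\,d\rho(t),
\]
which is symmetric in $t\mapsto -t$ and therefore sees only the combined tail mass; no amount of Tauberian reasoning applied to this single transform can recover $\rho(t)$ and $\rho(-t)$ individually. Your sentence ``each is individually regularly varying and recoverable'' is precisely the step that needs an argument and does not have one. The hypothesis \eqref{eq:tauberm} gives you locally uniform convergence for \emph{all} non-real $\mu$, and this is what must be exploited: varying $\mu$ over a half-plane (for instance via a Stieltjes-inversion--type integral of $\im m$ along segments $[0,t]+\I\varepsilon$, or equivalently by looking at $m$ along several rays) is what separates the positive and negative half-lines. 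Once you use the full uniformity in $\mu$, the absence of a jump of $\tilde\rho$ at $0$ is exactly what allows the inversion integral to localize to one side, and the presence of a jump is what forces you to settle for the two-sided quantity $\rho(t)-\rho(-t)$.
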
 
 
\begin{remark} \label{rem:2.9}
The converse of Theorem~\ref{th:tauber} is not true in general as the example $m(z)= \alpha_- \log(z) - \alpha_+ \log(-z)$ shows.
The corresponding measure is $\rho(t) = \alpha_\pm t$,  $\pm t\geq 0$, and \eqref{eq:tauberr} holds (choosing $m=\tilde{m}$) with $f(t)=1$ but \eqref{eq:tauberm} does not hold. However, under  additional assumptions the converse is true as well, as, for example, various Tauberian theorems for the Stieltjes transform show (see \cite{bgt, Kor04}). 
\end{remark}

\subsection{Examples}\label{sec:model}
In this subsection we present three model examples which play an important role in our future considerations.

\begin{example}
Consider the system \eqref{eq:II.01} with the Hamiltonian 
\be\label{eq:H_0}
H(x) = H_0:=\begin{pmatrix} a_0 & b_0 \\ b_0 & c_0 \end{pmatrix}, \quad x\in [0,\infty), 
\ee
where $a_0\geq0$, $b_0\in \R$ and $c_0>0$ are constants so that 
\be
h_0^2:=\det H_0 = a_0c_0 - b_0^2\geq0.
\ee
The fundamental matrix solution $\U_0$ in this case is simply 
\be\label{eq:u_0}
\U_0(z,x) = \E^{-zJH_0x} = \cos(zh_0x)I_2 - \frac{\sin(zh_0x)}{h_0} JH_0, \quad x\in[0,\infty),~z\in\C,  
\ee
where the fraction on the right-hand side has to be read as $zx$ if $h_0=0$. 
From this one readily infers that on $\C_+$, the $m$-function $m_0$ is equal to a constant $\zeta_0$ given by 
\be\label{eq:m_1}
\zeta_0 = \frac{\I h_0 - b_0}{c_0} = \frac{-a_0}{b_0+\I h_0}. 
\ee
Consequently, the corresponding spectral function is given by
\be\label{eq:rho_1}
\rho_0(t) = \frac{1}{\pi} \frac{h_0}{c_0}t,\quad t\in\R. 
\ee

Note that for each constant $\zeta_0$ in the closure of $\C_+$ there is a unique trace normed matrix $H_0$ so that the corresponding $m$-function is equal to $\zeta_0$ on $\C_+$. 
In fact, the entries of this matrix are explicitly given by  
\begin{align}\label{eqn31rel}
  a_0 & = \frac{|\zeta_0|^2}{|\zeta_0|^2+1}, & b_0 & = \frac{-\re\,\zeta_0}{|\zeta_0|^2+1}, & c_0 & = \frac{1}{|\zeta_0|^2+1}.  
\end{align}
Of course this fact is reminiscent of Theorem~\ref{th:dB01}. 
\end{example}

\begin{example}
Consider the system \eqref{eq:II.01} with the Hamiltonian 
\be\label{eq:R_a1}
H(x) = H_\alpha(x):=\begin{pmatrix}
p_{\alpha}(x) & 0\\
0 & p_{-\alpha}(x)
\end{pmatrix}, \quad x\in[0,\infty), 
\ee
where the function $p_\alpha:[0,\infty)\to[0,\infty)$ is defined by
\begin{align}\label{eq:R_a2}
 p_{\alpha}(x) & =\begin{cases}
(1+\alpha)x^{\alpha}, & \alpha\ge 0,\\
1, & \alpha<0,
\end{cases} \quad x\in[0,\infty). 
\end{align}
Clearly it suffices to look at the case $\alpha\ge 0$ since the case  $\alpha< 0$ follows by
exchanging rows and columns in the fundamental matrix solution. In this case the solutions of \eqref{eq:II.01} are connected with solutions of the scalar ordinary differential equation (see, e.g., \cite[Lemma 4.6]{ben}, \cite{ez,ka75})
\begin{align}\label{eq:sp_ex2}
 -y'' = \zeta\, p_{\alpha}y, \qquad \zeta=z^2,
\end{align}
which has a system of entire solutions given by (cf.\ \cite[(10.13.3)]{dlmf})
\begin{align}\begin{split}
\widetilde{\theta}_\alpha(\zeta,x)= 
\sqrt{x}\, \frac{\Gamma(1-\nu)}{(\nu-\nu^2)^{\frac{\nu}{2}}} \zeta^{-\frac{\nu}{2}}&J_{-\nu}\big(2\sqrt{(\nu-\nu^2)\zeta}\, x^{\frac{1}{2\nu}}\big)\\
&={}_0F_1\big(1-\nu, (\nu^2-\nu) \zeta x^{\frac{1}{\nu}}\big), \end{split} \\ \begin{split}
\widetilde{\phi}_\alpha(\zeta,x)=
\sqrt{x}\, \Gamma(1+\nu) \big(\sqrt{(\nu-\nu^2) \zeta}\big)^{\nu}&J_{\nu}\big(2\sqrt{(\nu-\nu^2)\zeta}\, x^{\frac{1}{2\nu}}\big)\\
&= x\, {}_0F_1(1+\nu, (\nu^2-\nu) \zeta x^{\frac{1}{\nu}}),
\end{split}\end{align}
where $1/\nu=2+\alpha$, $\Gamma$ is the usual Gamma function, $J_\nu$ is the Bessel function of the first kind and 
\begin{align}
{}_0F_1(c,z)=\sum_{k=0}^\infty \frac{\Gamma(c)}{\Gamma(k+c)} \frac{z^k}{k!}
\end{align}
 is the hypergeometric function.
Using \cite[(10.2.5), (10.4.7)]{dlmf}, we conclude that the $m$-function $\ti{m}_\alpha$ defined by 
\begin{align}
\widetilde{\psi}_\alpha(\zeta,\redot) = \widetilde{\theta}_\alpha(\zeta,\redot) +\ti{m}_\alpha(\zeta) \widetilde{\phi}_\alpha(\zeta,\redot) \in L^2(\R_+; p_\alpha)
\end{align}
is given by
\begin{align}\label{eq:m_alpha2C}
\widetilde{m}_\alpha(\zeta) & = -d_{\frac{1}{2+\alpha}} (-\zeta)^{\frac{1}{2+\alpha}}, &
d_\nu & = \frac{(1-\nu)^{\nu}\Gamma(1-\nu)}{\nu^{1-\nu}\Gamma(\nu)}.
\end{align}
%\begin{remark}
%The Liouville transformation $u(t) = x^{-\alpha/4} y(t)$, $t=\frac{2\sqrt{\alpha+1}}{\alpha+2} x^{\alpha/2+1}$ maps the above operator to the usual Bessel operator
%\begin{align}
% -u'' + \frac{l(l+1)}{t^2}= \zeta u, \qquad l+\frac{1}{2}=\frac{1}{\alpha+2},
%\end{align}
%and our above system is connected with the usual one for the Bessel operator via
%\begin{align*}
%\widetilde{\theta}_\alpha(\zeta,x)&= \frac{\Gamma(1-\frac{1}{2+\alpha})}{\sqrt{\pi}} \left(\frac{\sqrt{1+\alpha}}{2+\alpha}\right)^{\frac{1}{2+\alpha}-\frac{1}{2}} \sin(\frac{\pi}{2+\alpha})
%x^{-\alpha/4}\theta_l(\zeta,t),\\
%\widetilde{\phi}_\alpha(\zeta,x)&= \frac{\Gamma(1+\frac{1}{2+\alpha})}{\sqrt{\pi}} \left(\frac{2+\alpha}{\sqrt{1+\alpha}}\right)^{\frac{1}{2+\alpha}+\frac{1}{2}} x^{-\alpha/4} \phi_l(\zeta,t),
%\end{align*}
%\end{remark}
Thus the fundamental matrix solution of \eqref{eq:II.01} with $H=H_{\alpha}$ for $\alpha\ge 0$ reads
\be
U(z,x) = \begin{pmatrix}
\widetilde{\theta}_\alpha(z^2,x) & z\widetilde{\phi}_\alpha(z^2,x)\\
z^{-1}\widetilde{\theta}_\alpha'(z^2,x) & \widetilde{\phi}_\alpha'(z^2,x)
\end{pmatrix},\quad x\in[0,\infty),~z\in \C,
\ee
and hence the corresponding $m$-function $m_\alpha$ is given by $m_\alpha(z)=\frac{1}{z} \widetilde{m}_\alpha(z^2)$.
Using $m_{-\alpha}(z) = -1/m_{\alpha}(z)=1/m_{\alpha}(-z)$ we finally obtain
\be\label{eq:m_alpha2}
m_\alpha(z)=\begin{cases}
-d_{\frac{1}{2+\alpha}} \E^{-\I \pi\frac{1}{2+\alpha}}z^{-\frac{\alpha}{2+\alpha}}, & \alpha>0,\\
\quad \I, & \alpha=0,\\
d_{\frac{1+|\alpha|}{2+|\alpha|}} \E^{\I \pi\frac{1}{2+|\alpha|}}z^{\frac{|\alpha|}{2+|\alpha|}}, & \alpha<0,
\end{cases}
\quad z\in \C_+.
\ee 
Here the branch cut of the root is taken along the negative semi-axis as usual.
Finally, the corresponding spectral function $\rho_\alpha$ is given by
\be\label{eq:rho_alpha}
\rho_\alpha(t) = \rho_\alpha(-t) = \frac{1}{\pi}\begin{cases}
\frac{2+\alpha}{2}\sin(\frac{\pi}{2+\alpha})d_{\frac{1}{2+\alpha}}\, t^{\frac{2}{2+\alpha}}, & \alpha>0,\\
t, & \alpha=0,\\
\frac{2+|\alpha|}{2+2|\alpha|}\sin(\frac{\pi}{2+|\alpha|})d_{\frac{1+|\alpha|}{2+|\alpha|}}\, t^{\frac{2+2|\alpha|}{2+|\alpha|}}, & \alpha<0,
\end{cases}\quad t\ge 0.
\ee

Note that the trace normed Hamiltonian $\tilde{H}_\alpha$ corresponding to the $m$-function $m_{\alpha}$ is given by  (cf.\ Lemma \ref{th:uniq}) 
\begin{align}\label{eq:R_a3}
 \ti{H}_\alpha(x) = \frac{1}{\eta_\alpha'(\xi_\alpha(x))}\begin{pmatrix} p_{\alpha}(\xi_\alpha(x)) & 0 
 \\ 0 & p_{-\alpha}(\xi_\alpha(x)) \end{pmatrix},\quad x\in [0,\infty),
\end{align} 
where $\eta_\alpha'=\tr\,H_\alpha$ and $\xi_\alpha$ is the inverse of $\eta_\alpha$, that is,
\begin{align}\label{eq:R_a4}
\eta_\alpha'(x) & =1+(1+|\alpha|)x^{|\alpha|}, & \xi_{\alpha}(x)+(\xi_\alpha(x))^{1+|\alpha|} & =x.
\end{align}
 For this Hamiltonian we have 
 \begin{align}\label{eq:R_a5}
 \ti{H}_0(x) & =\frac{1}{2}H_0(x), & \ti{H}_\alpha(x) & = H_\alpha(x)+\oo(H_\alpha(x))\ \ \text{as}\ \ x\to 0,\quad \alpha\neq 0. 
 \end{align}
\end{example}

\begin{example}
Consider the system \eqref{eq:II.01} with the Hamiltonian  
\be\label{eq:ex2}
H(x)=\begin{pmatrix} \id_{[1,\infty)}(x) & 0 \\ 0 & \id_{[0,1)}(x) \end{pmatrix},\quad x\in[0,\infty).
\ee
It is immediate to check that the fundamental matrix solution is
\be
\U(z,x)=\begin{pmatrix} 
1 & z - z(1-x)\id_{[0,1)}(x) \\ z(1-x)\id_{[1,\infty)}(x) & 1+ z^2(1-x)\id_{[1,\infty)}(x)
\end{pmatrix},\quad x\in[0,\infty),~z\in\C.
\ee
Therefore, the corresponding $m$-function is given by 
\be\label{eq:m_ex2}
m(z)=-\frac{1}{z},\quad z\in\C\backslash\R,
\ee
and the corresponding spectral function is a step function with a jump at zero
\be\label{eq:rho_ex2}
\rho(t) = \id_{(0,\infty)}(t),\quad t\in\R.
\ee

Also note that the $m$-function corresponding to the Hamiltonian 
\be
\tilde{H}(x)=\begin{pmatrix} \id_{[0,1)}(x) & 0 \\ 0 & \id_{[1,\infty)}(x) \end{pmatrix}, \quad x\in [0,\infty),
\ee
is simply given by $\tilde{m}(z)=-1/m(z) = z$.
\end{example}

%%%%%%%%%%%%%%%%%%%%%%%%%%%%%%%
\section{Spectral asymptotics for canonical systems}\label{sec:03}
%%%%%%%%%%%%%%%%%%%%%%%%%%%%%

The main aim of the current section is to characterize Hamiltonians $H$ for which the corresponding Weyl function $m$ of \eqref{eq:II.01} behaves asymptotically at infinity like the function $m_\alpha$ given by \eqref{eq:m_alpha2}. 

\subsection{The case \texorpdfstring{$\alpha=0$}{alpha=0}} \label{sec:5.1}

To explain our approach, we start with the simplest case when the $m$-function behaves asymptotically at infinity like a constant.

\begin{theorem}\label{th:4.1}
Let $H$ be a trace normed Hamiltonian on $[0,\infty)$ and $m$ be the corresponding $m$-function. Then the following conditions are equivalent:
\begin{enumerate}[label=\emph{(\roman*)}, ref=(\roman*), leftmargin=*, widest=iii]
\item \label{it3.1i} For some $a_0\geq0$, $b_0\in\R$ and $c_0>0$ with $h_0^2=a_0c_0-b_0^2\geq 0$ one has
\be\label{eq:5.01}
 \frac{1}{x} \int_0^x H(t) dt \rightarrow \begin{pmatrix} a_0 & b_0 \\ b_0 & c_0 \end{pmatrix}, \quad x\to 0. 
\ee
\item \label{it3.1ii} For some $\zeta_0$ in the closure of $\C_+$ one has 
\be\label{eq:5.02}
m(z) \rightarrow \zeta_0
\ee
 as $z\rightarrow\infty$ in any closed sector in $\C_+$.
\end{enumerate}
In this case, the quantities $a_0$, $b_0$, $c_0$ and $\zeta_0$ are related as in~\eqref{eq:m_1} and~\eqref{eqn31rel}.  
\end{theorem}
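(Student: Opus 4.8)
The plan is to use Proposition~\ref{prop:dB_corresp} together with the scaling Lemma~\ref{lem:scaling} to reduce the asymptotic statement for $m$ to a convergence statement for averaged Hamiltonians. The natural bridge is the family of rescaled Hamiltonians $H_n(x) := H(x/n)$ on $[0,n L)$ (where $L=\infty$ here), or equivalently $H_r(x) = H(x/r)$ with $r\to\infty$; by Lemma~\ref{lem:scaling} (applied with $r_1 = 1/r$, $r_2 = 1/r$, $r_3 = 1$, so that the trace normalization is preserved) the corresponding $m$-function is $m_r(z) = m(z/r)$. Thus $m(z)\to\zeta_0$ as $z\to\infty$ in any closed sector in $\C_+$ is essentially equivalent (modulo the usual care about uniformity versus the one-parameter family, which one handles by passing to sequences and invoking the locally uniform convergence in Proposition~\ref{prop:dB_corresp}\ref{itdBcor3}) to $m_r\to\zeta_0$ locally uniformly on $\C\backslash\R$ as $r\to\infty$, where $\zeta_0$ is the constant $m$-function of the model Hamiltonian $H_0$ with entries given by~\eqref{eqn31rel}.

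Next I would compute the averaged Hamiltonians of the rescaled family. We have
\[
\int_0^x H_r(t)\,dt = \int_0^x H(t/r)\,dt = r\int_0^{x/r} H(s)\,ds,
\]
so condition~\ref{it3.1i}, namely $\frac{1}{x}\int_0^x H(t)\,dt \to H_0$ as $x\to 0$, says precisely that $\int_0^x H_r(t)\,dt \to x\,H_0 = \int_0^x H_0\,dt$ for every fixed $x>0$ as $r\to\infty$; by Remark~\ref{rem:dBth} this pointwise convergence upgrades to locally uniform convergence. Since $H_0$ is exactly the Hamiltonian of Example~1 with $m$-function the constant $\zeta_0$, Proposition~\ref{prop:dB_corresp} (the equivalence \ref{itdBcor1}$\Leftrightarrow$\ref{itdBcor3}) gives $m_r \to \zeta_0$ locally uniformly on $\C\backslash\R$, i.e.\ $m(z/r)\to\zeta_0$, which after renaming is~\eqref{eq:5.02}. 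Conversely, if~\eqref{eq:5.02} holds, then $m_r\to\zeta_0$ locally uniformly on $\C\backslash\R$; but the constant $\zeta_0$ is the $m$-function of the trace normed Hamiltonian $H_0$ from~\eqref{eqn31rel}, so Proposition~\ref{prop:dB_corresp} yields $\int_0^x H_r(t)\,dt \to x H_0$ locally uniformly, which unwinds to~\ref{it3.1i}. The final sentence of the theorem (the relation between $a_0,b_0,c_0$ and $\zeta_0$) is then just~\eqref{eq:m_1} and~\eqref{eqn31rel} recorded in Example~1.

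The step I expect to require the most care is the passage between the two-sided limit ``$z\to\infty$ in any closed sector'' and the one-parameter rescaling $z\mapsto z/r$: one must check that $m(z)\to\zeta_0$ uniformly on sectors is genuinely equivalent to $m_r\to\zeta_0$ locally uniformly on $\C\backslash\R$ and not merely along sequences $z_n\to\infty$. The clean way is: given a closed sector $\Sigma\subset\C_+$ and $\eps>0$, pick a compact $K\subset\Sigma\cap\C_+$ (say an arc of the unit circle intersected with $\Sigma$) so that every $z$ with $|z|$ large in $\Sigma$ is of the form $z = r w$ with $w\in K$ and $r = |z|$; locally uniform convergence $m(\cdot/r)\to\zeta_0$ on the compact set $K$ then bounds $|m(z)-\zeta_0| = |m_r(w)-\zeta_0|$ uniformly for $|z|\ge R$, $z\in\Sigma$. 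For the reverse implication one runs a subsequence/Arzelà--Ascoli argument together with the fact (from Proposition~\ref{prop:dB_corresp}) that the only possible locally uniform limit of the normal family $\{m_r\}$ is forced once one knows the averaged Hamiltonians converge; alternatively one simply invokes Proposition~\ref{prop:dB_corresp} directly for the sequence $r=n$, which suffices since convergence of $\int_0^x H_r$ along integers together with monotonicity/equicontinuity gives the full limit. Everything else is bookkeeping with the explicit formulas of Example~1.
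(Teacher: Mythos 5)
Your overall strategy is exactly the paper's: rescale via $H_r(x)=H(x/r)$, identify the limit of $\int_0^x H_r(t)\,dt$ with the constant Hamiltonian $H_0$ of Example~1, and pass back and forth using Proposition~\ref{prop:dB_corresp} (with Remark~\ref{rem:dBth} to upgrade pointwise to locally uniform convergence). However, there is a concrete error in your application of Lemma~\ref{lem:scaling}. The Hamiltonian $H_r(x)=H(x/r)$ corresponds to the parameters $r_1=1/r$, $r_2=1$, $r_3=1$; it is already trace normed, since $\tr H(x/r)=1$, and the lemma then gives $m_r(z)=m(rz)$, \emph{not} $m(z/r)$. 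Your choice $r_1=r_2=1/r$ produces $\tfrac1r H(x/r)$, which is not trace normed (its trace is $1/r$) and whose $m$-function is just $m(z)$ itself, since $\tfrac1r H(x/r)$ is a reparametrization of $H$ in the sense of Lemma~\ref{th:uniq}. This matters: with your stated relation $m_r(z)=m(z/r)$, the conclusion $m_r\to\zeta_0$ would assert that $m(w)\to\zeta_0$ as $w\to 0$, i.e.\ it would tie condition \ref{it3.1i} to the \emph{low}-energy behavior of $m$ rather than to~\eqref{eq:5.02}, and the argument as written does not prove the theorem.

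The error is purely local and the fix is immediate: your final paragraph (writing $z=rw$ with $r=|z|$, $w$ on the unit circle, and $|m(z)-\zeta_0|=|m_r(w)-\zeta_0|$) already presupposes the correct relation $m_r(w)=m(rw)$, so you only need to correct the parameter choice in the first paragraph. Once that is done, both directions coincide with the paper's proof: the computation $\int_0^x H_r(t)\,dt = r\int_0^{x/r}H(s)\,ds \to xH_0$ handles \ref{it3.1i}$\Rightarrow$\ref{it3.1ii}, and the identity $\tfrac1x\int_0^x H(t)\,dt = \int_0^1 H_{1/x}(t)\,dt$ together with Proposition~\ref{prop:dB_corresp} handles the converse, with the constants matched through~\eqref{eq:m_1} and~\eqref{eqn31rel}.
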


\begin{proof}
\ref{it3.1i} $\Rightarrow$ \ref{it3.1ii}. Upon setting 
\begin{align*}
 H_0 & = \begin{pmatrix} a_0 & b_0 \\ b_0 & c_0 \end{pmatrix}, & H_r(x) & = H\Big(\frac{x}{r}\Big),\quad x\in[0,\infty),
\end{align*}
for all $r>0$, we obtain from \eqref{eq:5.01} that  
\[
\int_0^x H_r(t)-H_0\,dt = \int_0^x H(t/r)-H_0\,dt= r\int_0^{x/r} H(t)dt - H_0 x\rightarrow 0
\]
as $r\to\infty$ for every $x\in[0,\infty)$. By Proposition \ref{prop:dB_corresp}, the corresponding $m$-functions $m_r$ converge locally uniformly to the function $m_0$ given by \eqref{eq:m_1}. 
 However, in view of Lemma \ref{lem:scaling} we have 
\[
m_r(\mu)=m(r\mu),\quad \mu \in\C_+,\ r>0,
\]
and hence  
\[
m(r\mu)=m_r(\mu) \rightarrow m_0(\mu) = \frac{\I h_0 -b_0}{c_0}, \quad r\to\infty, 
\]
where the convergence holds uniformly for all $\mu$ in any compact subset in $\C_+$.

\ref{it3.1ii} $\Rightarrow$ \ref{it3.1i}. Let $a_0$, $b_0$, $c_0$ be defined by~\eqref{eqn31rel} such that the relation~\eqref{eq:m_1} holds. 
With the notation from the first part of the proof, we infer from~\eqref{eq:5.02} that the $m$-functions $m_r$ converge locally uniformly to the function $m_0$ as $r\to\infty$. 
Consequently, we have 
\[
 \frac{1}{x} \int_0^{x} H(t)dt - H_0 = \int_0^1 H(xt) - H_0\,dt = \int_0^1 H_{\frac{1}{x}}(t) - H_0\,dt \to0
\]
as $x\to0$ in view of Proposition~\ref{prop:dB_corresp}. 
\end{proof}
 
\begin{corollary}\label{cor:4.2}
Let $H$ be a Hamiltonian on $[0,L)$ and $m$ be the corresponding $m$-function. Then the following conditions are equivalent:
\begin{enumerate}[label=\emph{(\roman*)}, ref=(\roman*), leftmargin=*, widest=iii]
\item  For some $a_0\geq0$, $b_0\in\R$ and $c_0>0$ with $h_0^2=a_0c_0-b_0^2\geq 0$ one has
\begin{align}\label{eq:5.21}
  \frac{1}{\eta(x)} \int_0^{x} H(t)dt &  \rightarrow \begin{pmatrix}
a_0 & b_0\\
b_0 & c_0
\end{pmatrix}, \quad x\to 0, & \eta(x) & =\int_0^x\tr\, H(t)dt.
\end{align}
\item
For some $\zeta_0$ in the closure of $\C_+$ one has~\eqref{eq:5.02} 
as $z\rightarrow\infty$ in any closed sector in $\C_+$.
\end{enumerate}
In this case, the quantities $a_0$, $b_0$, $c_0$ and $\zeta_0$ are related as in~\eqref{eq:m_1} and~\eqref{eqn31rel}. 
\end{corollary}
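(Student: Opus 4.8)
This is a formal consequence of Theorem~\ref{th:4.1}: the point is simply to remove the trace normalization by passing to an equivalent trace normed Hamiltonian. Since the $m$-function $m$ of $H$ is a Herglotz--Nevanlinna function, Theorem~\ref{th:dB01} furnishes a trace normed Hamiltonian $\ti{H}$ on $[0,\infty)$ whose $m$-function is again $m$. By Lemma~\ref{th:uniq} there is then a locally absolutely continuous bijection $\eta\colon[0,L)\to[0,\infty)$ with $H(x)=\eta'(x)\ti{H}(\eta(x))$ for almost all $x$ and, since $\ti{H}$ is trace normed, $\eta'=\tr\,H$. Thus $\eta$ is precisely the function $\eta(x)=\int_0^x\tr\,H(t)dt$ appearing in~\eqref{eq:5.21}; because $H(x)\ge 0$ and $H(x)\neq 0$ force $\tr\,H(x)>0$ for almost all $x$, this $\eta$ is a continuous strictly increasing bijection of $[0,L)$ onto $[0,\infty)$ with $\eta(0)=0$, hence a homeomorphism, and in particular $\eta(x)\to 0$ as $x\to 0$ and conversely.

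The next step is the change of variables $s=\eta(t)$ in the integral of $\ti{H}$. Since $\eta$ is absolutely continuous and monotone and $\ti{H}(\eta(t))\eta'(t)=\ti{H}(\eta(t))\tr\,H(t)=H(t)$ for almost all $t$, the substitution rule gives
\be
\int_0^{\eta(x)}\ti{H}(s)\,ds=\int_0^x\ti{H}(\eta(t))\,\eta'(t)\,dt=\int_0^x H(t)\,dt,\qquad x\in[0,L).
\ee
Dividing by $\eta(x)$ and putting $y=\eta(x)$ we obtain the identity
\be
\frac{1}{\eta(x)}\int_0^x H(t)\,dt=\frac{1}{y}\int_0^y\ti{H}(s)\,ds .
\ee
As $x\to 0$ the parameter $y=\eta(x)$ runs through a full right neighbourhood of $0$, so the limit in~\eqref{eq:5.21} exists (and equals the matrix on the right-hand side of~\eqref{eq:5.21}) if and only if $\frac1y\int_0^y\ti{H}(s)\,ds$ has that same limit as $y\to 0$; that is, condition~(i) of the corollary for $H$ is equivalent to condition~\ref{it3.1i} of Theorem~\ref{th:4.1} for $\ti{H}$.

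On the other hand, $H$ and $\ti{H}$ have the same $m$-function $m$, so condition~(ii) of the corollary is literally condition~\ref{it3.1ii} of Theorem~\ref{th:4.1} for $\ti{H}$. Applying Theorem~\ref{th:4.1} to the trace normed Hamiltonian $\ti{H}$ now yields the asserted equivalence, with $a_0$, $b_0$, $c_0$, $\zeta_0$ related as in~\eqref{eq:m_1} and~\eqref{eqn31rel}. The only point requiring a little care is the justification of the substitution formula for the absolutely continuous monotone map $\eta$ (and the elementary observation that $\eta$ is a homeomorphism fixing $0$); there is no analytic difficulty here beyond what is already contained in Theorem~\ref{th:4.1}, which is where the real work was done.
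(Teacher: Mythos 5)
Your proof is correct and follows essentially the same route as the paper: reduce to Theorem~\ref{th:4.1} by passing to the trace normed reparametrization $\ti{H}(s)=\eta'(\xi(s))^{-1}H(\xi(s))$ with $\xi=\eta^{-1}$, which the paper writes down directly and you instead obtain from Theorem~\ref{th:dB01} combined with Lemma~\ref{th:uniq}. The only difference is presentational — you make explicit the change of variables $\frac{1}{\eta(x)}\int_0^x H(t)\,dt=\frac{1}{y}\int_0^y\ti{H}(s)\,ds$ that the paper leaves implicit — so there is nothing to correct.
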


\begin{proof}
Consider the trace normed Hamiltonian $\ti{H}$ on $[0,\infty)$ defined by 
\[
\ti{H}(x) = \eta'(\xi(x))^{-1} H(\xi(x)),\quad x\in[0,\infty),
\]
where $\xi$ is the inverse of $\eta$. By Lemma \ref{th:uniq} the $m$-functions of $H$ and $\ti{H}$ coincide and it remains to apply Theorem \ref{th:4.1}.
\end{proof}

\begin{corollary}\label{cor:4.3}
Let $H$ be a Hamiltonian on $[0,L)$ and $\rho$ be the corresponding spectral function. 
If for some $a_0\geq0$, $b_0\in\R$ and $c_0>0$ with $h_0^2=a_0c_0-b_0^2\geq 0$ one has~\eqref{eq:5.21}, then
\be
\rho(t) =  \frac{1}{\pi}\frac{h_0}{c_0} t +\oo(t),\quad t\to\pm \infty. 
\ee
\end{corollary}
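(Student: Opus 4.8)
The plan is to deduce the statement by combining the equivalence already established in Corollary~\ref{cor:4.2} with the Tauberian-type Theorem~\ref{th:tauber}, using the constant model $m_0\equiv\zeta_0$ of Example~1 in Section~\ref{sec:model} as the comparison function.

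First, by Corollary~\ref{cor:4.2}, the hypothesis~\eqref{eq:5.21} implies that $m(z)\to\zeta_0$ as $z\to\infty$ in every closed sector in $\C_+$, where $\zeta_0=(\I h_0-b_0)/c_0$ is the constant in~\eqref{eq:m_1}. Let $H_0$ be the constant Hamiltonian~\eqref{eq:H_0}; then its $m$-function is $m_0\equiv\zeta_0$ and, by~\eqref{eq:rho_1}, its spectral function is $\rho_0(t)=\tfrac1\pi\tfrac{h_0}{c_0}t$, which is continuous at $0$.

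Next I would recast this convergence in the form required by Theorem~\ref{th:tauber}. Choosing $f\equiv 1$, so that $f(r)/r=1/r$ decreases, it suffices to check that $m(r\mu)=m_0(\mu)(1+\oo(1))$ as $r\to\infty$, locally uniformly for non-real $\mu$. If $\mu$ ranges over a compact subset of $\C_+$, the points $r\mu$ with $r\ge 1$ eventually lie in one fixed closed sector in $\C_+$, so $m(r\mu)\to\zeta_0$ uniformly; if $\mu$ ranges over a compact subset of $\C_-$, the reflection $m(z)^*=m(z^*)$ gives $m(r\mu)=\overline{m(r\mu^*)}\to\zeta_0^*=m_0(\mu)$. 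Since $|m_0(\mu)|=|\zeta_0|$ is a fixed nonzero constant on $\C\setminus\R$, dividing yields the required multiplicative form. Theorem~\ref{th:tauber} (the branch in which $\tilde\rho$ has no jump at $0$) then gives
\[
\rho(\pm t)=t\bigl(\rho_0(\pm 1)+\oo(1)\bigr)=\pm\tfrac1\pi\tfrac{h_0}{c_0}\,t+\oo(t),\qquad t\to+\infty,
\]
which is exactly the asserted asymptotics $\rho(t)=\tfrac1\pi\tfrac{h_0}{c_0}\,t+\oo(t)$ as $t\to\pm\infty$.

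I do not expect a genuine obstacle: once Corollary~\ref{cor:4.2} and Theorem~\ref{th:tauber} are available, the argument is essentially bookkeeping. The one case that is not covered verbatim is the degenerate one $\zeta_0=0$, i.e.\ $h_0=b_0=0$ (so $a_0=b_0=0$, $c_0=1$), where $m_0\equiv 0$ and one cannot divide by $m_0$. There the conclusion $\rho(t)=\oo(t)$ follows directly from $m(z)\to0$ in sectors: evaluating~\eqref{eq:II.07} at $z=\I y$ forces $c_2=0$ and $\int\frac{y}{t^2+y^2}\,d\rho(t)\to0$, and since $\rho$ is non-decreasing with $\rho(0)=0$ one has $0\le\rho(y)/y\le(\rho(y)-\rho(-y))/y\le 2\int\frac{y}{t^2+y^2}\,d\rho(t)\to0$, with the analogous bound for $|\rho(-y)|/y$.
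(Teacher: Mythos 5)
Your proof is correct and follows essentially the same route as the paper, whose proof is the one-line combination of Corollary~\ref{cor:4.2}, formula~\eqref{eq:rho_1} and the Tauberian Theorem~\ref{th:tauber}. Your explicit handling of the degenerate case $\zeta_0=0$ (where the multiplicative form $m(r\mu)=f(r)m_0(\mu)(1+\oo(1))$ required by Theorem~\ref{th:tauber} is unavailable and one must argue directly from the Herglotz representation) is a detail the paper's proof leaves implicit, and you treat it correctly.
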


\begin{proof}
The proof follows by combining Corollary \ref{cor:4.2} and formula \eqref{eq:rho_1} with the Tauberian Theorem~\ref{th:tauber}. 
\end{proof}

Clearly, the converse of Corollary \ref{cor:4.3} is not true since different Hamiltonians $H_0$ given by \eqref{eq:H_0} have the same spectral function \eqref{eq:rho_1} (see also Remark \ref{rem:2.9}).

\subsection{The case \texorpdfstring{$\alpha\neq 0$}{alpha<>0}}\label{sec:5.2}
 
Given $H$ as in \eqref{eq:II.02} we set
\begin{align}\label{eq:ABC}
A(x) & =\int_0^x a(t)dt, &  B(x) & =\int_0^x b(t)dt, &  C(x) & =\int_0^x c(t)dt.
\end{align}
We begin with the case $\alpha>0$.

\begin{definition}\label{def:f_func}
Let $F:[0,\infty)\to [0,\infty)$ be the inverse of $x\mapsto 1/(xA(x))$ and set
\be\label{eq:f_func}
f(r)=rF(r^2),\quad r>0.
\ee
\end{definition}

By definition, we have $A(x) = 1/(xF^{-1}(x))$ and 
\be\label{eq:f=F}
rf(r)A\Big(\frac{f(r)}{r}\Big)=r^2F(r^2)A(F(r^2))=1,\quad r>0.
\ee
It is easy to see that $F(r)\downarrow  0$ as $r\uparrow \infty$. Moreover, $f(r)\uparrow \infty$ as $r\uparrow \infty$ whenever $A(x)=\oo(x)$ as $x\downarrow 0$.

To formulate our theorem we recall the set of regularly varying functions $RV_\alpha$ defined in Definition~\ref{def:f_rv}.

\begin{theorem}\label{th:}
Let $H$ be a  trace normed Hamiltonian on $[0,\infty)$ and $m$ be the corresponding $m$-function \eqref{eq:II.06}.
Then, for given $\alpha>0$, the following conditions are equivalent:
\begin{enumerate}[label=\emph{(\roman*)}, ref=(\roman*), leftmargin=*, widest=iii]
\item\label{italphai} The coefficients \eqref{eq:ABC} satisfy
\be\label{eq:5.07c}
A\in RV_{1+\alpha}(0)\quad\text{and}\quad B(x)=\oo(\sqrt{xA(x)}),\quad x\to 0.
\ee
\item\label{italphaii} For some strictly increasing function $f\in RV_{\frac{\alpha}{2+\alpha}}(\infty)$ and $m_\alpha$ from \eqref{eq:m_alpha2} %, the $m$-function \eqref{eq:II.06} satisfies
\be\label{eq:5.07}
m(r\mu) = \frac{m_\alpha(\mu)}{f(r)}(1+\oo(1)),\quad r\to \infty,
\ee
where the estimate holds uniformly for $\mu$ in any compact subset of $\C$ not intersecting the real line.
\end{enumerate}
In this case the function $f$ and the Hamiltonian $H$ are connected via Definition \ref{def:f_func}. 
\end{theorem}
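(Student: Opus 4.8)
The plan is to reduce the theorem to the model Example with Hamiltonian $H_\alpha$ from \eqref{eq:R_a1} via the scaling argument already used in the proof of Theorem~\ref{th:4.1}, combined with Proposition~\ref{prop:dB_corresp}. For $r>0$ introduce the rescaled Hamiltonians $H_r(x) = \frac{1}{\eta_r'(\xi_r(x))} H_r^{\#}(x)$, where $H_r^{\#}(x) = f(r) H(x/r)$ (with $f$ from Definition~\ref{def:f_func}) and then trace-normalize using Lemma~\ref{th:uniq}; by Lemma~\ref{lem:scaling} the corresponding $m$-functions are $m_r(\mu) = c(r)\, m(r'\mu)$ for the appropriate scaling constants. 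The aim is to choose the scaling so that $m_r(\mu) = f(r) m(r\mu)$ (up to the trace normalization), so that \eqref{eq:5.07} becomes exactly the statement $m_r \to m_\alpha$ locally uniformly on $\C\setminus\R$, which by Proposition~\ref{prop:dB_corresp} is equivalent to $\int_0^x H_r(t)\,dt \to \int_0^x \tilde H_\alpha(t)\,dt$ locally uniformly, with $\tilde H_\alpha$ the trace-normed Hamiltonian from \eqref{eq:R_a3}. Because of \eqref{eq:R_a5}, near the origin $\tilde H_\alpha$ is asymptotically $H_\alpha$, so it will be equivalent (and cleaner) to work with the non-trace-normed pair, invoking the Remark after Proposition~\ref{prop:dB_corresp} that \ref{itdBcor1} $\Rightarrow$ \ref{itdBcor3} without trace normalization.

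So the core reduction becomes: condition \eqref{eq:5.07c} is equivalent to
\[
\int_0^x f(r) H\!\Big(\frac{t}{r}\Big)\,dt \longrightarrow \int_0^x H_\alpha(t)\,dt = \begin{pmatrix} x^{1+\alpha} & 0 \\ 0 & x \end{pmatrix}, \qquad r\to\infty,
\]
locally uniformly in $x$. The $(1,1)$-entry is $f(r) \cdot r^{-1} A(x r^{-1} \cdot r)$... more precisely $\int_0^x f(r) a(t/r)\,dt = r f(r) A(x/r)$; using the defining relation \eqref{eq:f=F}, $rf(r)A(f(r)/r)=1$, together with $A\in RV_{1+\alpha}(0)$ and the fact that $f\in RV_{\frac{\alpha}{2+\alpha}}(\infty)$ (which must be extracted from Definition~\ref{def:f_func} using the inversion theorem for regularly varying functions, Appendix~\ref{app:osv}), one computes $r f(r) A(x/r) = \frac{A(x/r)}{A(f(r)/r)} \to x^{1+\alpha}$ by the uniform convergence theorem for $RV$. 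The $(2,2)$-entry is controlled by trace-normalization: $a+c=1$ forces $\int_0^x f(r)c(t/r)\,dt = x f(r) - r f(r) A(x/r) \to x$ once we check $f(r)\to\infty$ appropriately — here one needs $A(x)=o(x)$, which follows from $A\in RV_{1+\alpha}(0)$ with $\alpha>0$. The off-diagonal entry $\int_0^x f(r) b(t/r)\,dt = r f(r) B(x/r)$ tends to $0$ precisely because $B(x) = o(\sqrt{x A(x)})$ and $r f(r)\sqrt{(x/r)A(x/r)} = \sqrt{ r f(r) A(x/r)}\cdot\sqrt{r f(r)} \cdot \sqrt{x/r}$, which after using \eqref{eq:f=F} and the $RV$ asymptotics stays bounded; hence the $o(\cdot)$ in the hypothesis on $B$ kills this term. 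This establishes \ref{italphai} $\Rightarrow$ \ref{italphaii}. For the converse, run Proposition~\ref{prop:dB_corresp} backwards: \eqref{eq:5.07} with some strictly increasing $f\in RV_{\frac{\alpha}{2+\alpha}}(\infty)$ forces the integral-average convergence above along $r\to\infty$, from which one reads off $A(x/r)/A(f(r)/r)\to x^{1+\alpha}$, hence $A\in RV_{1+\alpha}(0)$ (and the identification of $f$ with Definition~\ref{def:f_func} up to asymptotic equivalence), and similarly the vanishing of the off-diagonal average yields $B(x)=o(\sqrt{xA(x)})$.

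The main obstacle I expect is the bookkeeping around regular variation: one must (i) show that $F$ from Definition~\ref{def:f_func} is well-defined (the map $x\mapsto 1/(xA(x))$ is eventually strictly decreasing to $0$ near the origin — this needs $A\in RV_{1+\alpha}$ with $\alpha>0$ so that $xA(x)\in RV_{2+\alpha}$ is increasing) and (ii) correctly transfer regular variation at $0$ through the inversion $x\mapsto 1/(xA(x))$ and the substitution $r\mapsto r^2$ to get $f\in RV_{\frac{\alpha}{2+\alpha}}(\infty)$, with the precise index $\frac{\alpha}{2+\alpha} = 1 - \frac{2}{2+\alpha}$ coming out of $2+\alpha \mapsto 1/(2+\alpha) \mapsto$ the square. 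A second, more delicate point is making the scaling genuinely compatible with the trace normalization assumed on $H$: the cleanest route, as noted, is to drop trace-normalization in the intermediate step (legitimate by the Remark following Proposition~\ref{prop:dB_corresp}), prove the equivalence for the pair $(H, H_\alpha)$, and only at the end note via \eqref{eq:R_a5} and Lemma~\ref{th:uniq} that passing to $\tilde H_\alpha$ changes nothing in the asymptotics \eqref{eq:5.07}. Everything else — the uniformity in $\mu$ on compact subsets of $\C\setminus\R$, the strict monotonicity of $f$ — is inherited directly from the corresponding clauses of Proposition~\ref{prop:dB_corresp} and from Definition~\ref{def:f_func}.
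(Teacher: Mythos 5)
Your overall strategy --- rescale the Hamiltonian, invoke the de Branges continuity (Proposition~\ref{prop:dB_corresp}), and reduce to the model Hamiltonian $H_\alpha$ --- is exactly the paper's, but the scaling you actually write down is wrong, and the key limits you claim for it fail. With $H_r^{\#}(x)=f(r)H(x/r)$, Lemma~\ref{lem:scaling} (with $r_1=1/r$, $r_2=f(r)$, $r_3=1$) gives $m_r(\mu)=m(rf(r)\mu)$, not the desired $f(r)m(r\mu)$. Worse, your $(1,1)$-entry computation is incorrect: $rf(r)A(x/r)=A(x/r)/A(F(r^2))$ by \eqref{eq:f=F}, and since $F(r^2)=f(r)/r$ with $f(r)\to\infty$, the argument $x/r$ is asymptotically \emph{much smaller} than $F(r^2)$, so this ratio tends to $0$, not to $x^{1+\alpha}$ (the uniform convergence theorem for $RV$ functions needs the two arguments to differ by a factor in a compact subset of $(0,\infty)$). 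Likewise the $(2,2)$-entry $f(r)x-rf(r)A(x/r)$ blows up because of the factor $f(r)x$. The correct choice, which the paper makes, is the \emph{anisotropic} scaling $r_1=F(r^2)=f(r)/r$, $r_2=r_3=f(r)$, so that the $a$-entry carries $f(r)^2$, the $b$-entry carries $f(r)$, the $c$-entry carries $1$, and the spatial argument is $F(r^2)x$; then $\int_0^x f(r)^2a(F(r^2)t)\,dt=rf(r)A(xF(r^2))=A(xF(r^2))/A(F(r^2))\to x^{1+\alpha}$ as required, the trace normalization gives the $(2,2)$-limit $x$, and $m_r(\mu)=f(r)m(r\mu)$.

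There is a second gap in your converse direction. The Remark following Proposition~\ref{prop:dB_corresp} only asserts that \ref{itdBcor1} implies \ref{itdBcor3} without trace normalization; it does \emph{not} give the reverse implication for non-trace-normed Hamiltonians, which is what you need when you ``run the proposition backwards.'' Since the rescaled $H_r$ is not trace normed, the paper must first pass to the reparametrized primitives $\int_0^{\xi_r(x)}H_r(t)\,dt$ (with $\xi_r$ the inverse of $\eta_r(x)=\int_0^x\tr H_r$), prove $\xi_r\to\xi_\alpha$ locally uniformly via the identity $A+C=\mathrm{id}$, and only then extract $A(x)=\bigl(xF^{-1}(x)\bigr)^{-1}(1+\oo(1))$ and hence $A\in RV_{1+\alpha}(0)$ through Lemma~\ref{lem:a.asymp}, together with $rB(F(r^2))\to0$ for the off-diagonal condition. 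This reparametrization step is essential and is missing from your sketch.
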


\begin{proof}
\ref{italphai} $\Rightarrow$ \ref{italphaii}. Assume that $A\in RV_{1+\alpha}(0)$ and let $f$ and $F$ be the functions in Definition \ref{def:f_func}. Clearly, we have 
\[
F\in RV_{\frac{-1}{2+\alpha}}(\infty)\quad \text{and} \quad f\in RV_{\frac{\alpha}{2+\alpha}}(\infty). 
\]
Using Lemma \ref{lem:scaling} with 
\begin{align*}
r_1 & :=\frac{f(r)}{r}=F(r^2), & r_2 & :=f(r), & r_3 & :=f(r),
\end{align*}
 we obtain a family of scaled Hamiltonians $H_r$ with corresponding $m$-functions $m_r$
\begin{align}\label{eq:5.10}
H_r(x) & =\begin{pmatrix}
f(r)^2 a(f(r)x/r) & f(r)b(f(r)x/r)\\
f(r)b(f(r)x/r) & c(f(r)x/r)
\end{pmatrix}, & m_r(\mu) & = f(r) m(\mu r).
\end{align}
Note that $H_r$ is no longer trace normed. Since $A\in RV_{1+\alpha}(0)$, we get 
%by employing \cite[Theorem IV.5.2]{Kor04}
\be\label{eq:5.37}
\int_0^x H_r(t)dt \to \int_0^x H_\alpha(t)dt=\begin{pmatrix}
x^{1+\alpha} & 0\\
0 & x
\end{pmatrix}
\ee
for every $x>0$ as $r\to \infty$. Indeed, using \eqref{eq:f=F}, we compute
\begin{align*}
\int_0^x f(r)^2 a(f(r)t/r)dt = rf(r)A\big(xF(r^2)\big)= \frac{A\big(xF(r^2)\big)}{A\big(F(r^2)\big)}= x^{1+\alpha}(1+\oo(1))
\end{align*}
as $r\to \infty$. Furthermore, since $\tr\, H(x)=1$, we get
\begin{align*}
\int_0^x c(f(r)t/r)dt = x-\int_0^x a(f(r)t/r)dt = x - \frac{A(xF(r^2))}{F(r^2)} = x(1+\oo(1))
\end{align*}
as $r\to \infty$. Here we also used the fact that $A(x) = \oo(x)$ as $x\to 0$ since $A\in RV_{1+\alpha}(0)$ with $\alpha>0$. Finally, in view of the second condition in \eqref{eq:5.07c} and employing \eqref{eq:f=F} once again, we obtain 
\begin{align*}
\int_0^x f(r)b(f(r)t/r)dt =r B(xF(r^2))=\oo\Big(r\sqrt{xF(r^2) A\big(xF(r^2)\big)}\Big)=x^{1+\frac{\alpha}{2}}\oo(1)
\end{align*}
as $r\to\infty$. Therefore, by Proposition \ref{prop:dB_corresp} and Remark \ref{rem:dBth}, we have 
\be
m_r(\mu)=f(r)m(r\mu)= m_\alpha(\mu)(1+\oo(1)), \quad r\to \infty.
\ee
%as $r\to \infty$.

\ref{italphaii} $\Rightarrow$ \ref{italphai}. Now assume that $m$ satisfies \eqref{eq:5.07} for some strictly increasing function $f\in RV_{\frac{\alpha}{2+\alpha}}(\infty)$. Consider the scaled Hamiltonians $H_r$ defined by \eqref{eq:5.10} with $F(r):=f(\sqrt{r})/\sqrt{r}$, $r>0$. In view of \eqref{eq:5.07}, the corresponding $m$-functions $m_r$ converge uniformly on compact subsets to $m$ as $r\to \infty$. By Proposition \ref{prop:dB_corresp} and Lemma \ref{th:uniq}, we conclude that
\be
\int_0^{\xi_r(x)} H_r(t)dt \to \int_0^{\xi_\alpha(x)} H_\alpha(t)dt,\quad r\to\infty,
\ee
locally uniformly on $[0,\infty)$. Here $\xi_r(x)$ and $\xi_\alpha(x)$ are, respectively, the inverse functions to 
\[
\eta_r(x) = \int_0^x \tr\, H_r(t)dt,\quad \eta_\alpha(x) = \int_0^x \tr\, H_\alpha(t)dt=x+x^{1+\alpha},\quad x>0.
\]
 The latter implies that 
\begin{align}\label{eq:5.41}
\begin{split}
\lim_{r\to\infty}rf(r)A(F(r^2)\xi_r(x))= \xi_{\alpha}(x)^{1+\alpha},\quad 
\lim_{r\to\infty} \frac{C(F(r^2)\xi_r(x))}{F(r^2)} = \xi_{\alpha}(x)
  \end{split}
  \end{align}
  and 
 \begin{align}\label{eq:5.41c}
 \lim_{r\to\infty}  rB(F(r^2)\xi_r(x))= 0 
\end{align}
 locally uniformly on $[0,\infty)$. Noting that $A(x)+C(x)=x$ and using the first relation in \eqref{eq:5.41}, we get
\begin{align*}
\xi_r(x) - \frac{C(F(r^2)\xi_r(x))}{F(r^2)}  = & \frac{A(F(r^2)\xi_r(x))}{F(r^2)} = \frac{rf(r)A(F(r^2)\xi_r(x))}{f(r)^2} \\
&= \frac{\xi_{\alpha}(x)^{1+\alpha}(1+\oo(1))}{f(r)^2}=\xi_{\alpha}(x)^{1+\alpha}\oo(1)
\end{align*}
as $r\to \infty$. In view of the second relation in \eqref{eq:5.41}, we conclude that 
\be\label{eq:5.42}
\xi_r(x) =\xi_\alpha(x)(1+\oo(1))
\ee
locally uniformly as $r\to\infty$. Since $\xi_\alpha(x)$ is the inverse to $\eta_\alpha(x)=x+x^{1+\alpha}$ and $\eta_\alpha'(x) \ge 1$ on $[0,\infty)$, we conclude that $\eta_r(x)\to \eta_\alpha(x)$ locally uniformly on $[0,\infty)$ as $r\to \infty$. The latter implies that
\[
r{f(r)}A(F(r^2)x) \to x^{\alpha+1},\quad r\to \infty,
\]
locally uniformly on $[0,\infty)$. Setting $x=1$,
we get
\[
A(F(r^2)) = \frac{1}{rf(r)}(1+\oo(1)) = \frac{1}{r^2F(r^2)}(1+\oo(1))
\]
as $r\to \infty$. Hence 
\be\label{eq:5.43}
A(x) = \frac{1}{xF^{-1}(x)}(1+\oo(1)),\quad x\to 0.
\ee
Upon observing that $F^{-1}\in RV_{-(2+\alpha)}(0)$ and applying Lemma \ref{lem:a.asymp}, we conclude that $A\in RV_{1+\alpha}(0)$.

Finally, noting that \eqref{eq:5.41c} and \eqref{eq:5.42} hold locally uniformly on $[0,\infty)$, one can show that
$rB(F(r^2))\to 0$ as $  r\to\infty$. 
In view of \eqref{eq:5.43}, we conclude that $B$ satisfies the second condition in \eqref{eq:5.07c}.
 \end{proof}

\begin{corollary}\label{cor:3.08}
 With the assumptions of Theorem \ref{th:}, for given $\alpha>0$, the following conditions are equivalent:
\begin{enumerate}[label=\emph{(\roman*)}, ref=(\roman*), leftmargin=*, widest=iii]
\item The coefficients \eqref{eq:ABC} satisfy
\be\label{eq:5.07d2}
A(x) =x^{1+\alpha}(d+\oo(1))\quad\text{and}\quad B(x)=\oo(x^{\frac{2+\alpha}{2}}),\ \ x\to 0.
\ee
\item With $m_\alpha$ from \eqref{eq:m_alpha2}, the $m$-function \eqref{eq:II.06} satisfies
\be\label{eq:5.07e1}
m(z)=  m_\alpha(z)(d^{\frac{1}{2+\alpha}}+\oo(1)),
\ee
as $z\to \infty$ in any closed sector in $\C_+$. 
\end{enumerate}
\end{corollary}

\begin{proof}
The proof immediately follows from Theorem \ref{th:}. We only need to mention that under the above assumptions
\[
f(r)=d^{-\frac{1}{2+\alpha}}r^{\frac{\alpha}{2+\alpha}}(1+\oo(1)),\quad r\to\infty.\qedhere
\]
\end{proof}

Using Theorem \ref{th:} we can easily investigate the case $\alpha<0$ as well. Indeed, it suffices to note that the change of the Hamiltonian $\tilde{H}=-JHJ$ implies that the corresponding $m$-functions are connected by 
\begin{align}
\tilde{m}(z)={1}/{m(-z)},\quad z\in\C_+.
\end{align}
%It remains to apply Theorem \ref{th:}.To this end we will need the following definition. 

\begin{definition}\label{def:g_func}
Let $G:[0,\infty)\to [0,\infty)$ be the inverse of $x\mapsto 1/(xC(x))$ and set
\be\label{eq:g_func}
g(r)=rG(r^2),\quad r>0.
\ee
\end{definition}

%Note that the definition of $g$ is analogous to the definition of $f$ and hence $g$ has the same properties as $f$.

\begin{theorem}\label{cor:}
 With the assumptions of Theorem \ref{th:}, for given $\alpha<0$, the following conditions are equivalent:
\begin{enumerate}[label=\emph{(\roman*)}, ref=(\roman*), leftmargin=*, widest=iii]
\item The coefficients \eqref{eq:ABC} satisfy
\be\label{eq:5.07d3}
C\in RV_{1+|\alpha|}(0)\quad\text{and}\quad B(x)=\oo(\sqrt{xC(x)}),\ \ x\to 0.
\ee
\item For some strictly increasing function $g\in RV_{\frac{\alpha}{2+\alpha}}(\infty)$ and $m_\alpha$ from \eqref{eq:m_alpha2}, the $m$-function \eqref{eq:II.06} satisfies
\be\label{eq:5.07e2}
m(r\mu)= m_\alpha(\mu)g(r)(1+\oo(1)),\quad r\to \infty,
\ee
where the estimate holds uniformly for $\mu$ in any compact subset of $\C$ not intersecting the real line.
\end{enumerate}
In this case the function $g$ and the Hamiltonian $H$ are connected via Definition \ref{def:g_func}. 
\end{theorem}

\begin{corollary}\label{cor:3.08B}
 With the assumptions of Theorem \ref{th:}, for given $\alpha>0$, the following conditions are equivalent:
\begin{enumerate}[label=\emph{(\roman*)}, ref=(\roman*), leftmargin=*, widest=iii]
\item The coefficients \eqref{eq:ABC} satisfy
\be\label{eq:5.07d}
C(x)  = x^{1+\alpha}(d+\oo(1))\quad\text{and}\quad B(x)=\oo(x^{\frac{2+\alpha}{2}}),\ \ x\to 0,
\ee
\item With $m_\alpha$ from \eqref{eq:m_alpha2}, the $m$-function \eqref{eq:II.06} satisfies
\be\label{eq:5.07e3}
m(z)=  m_{-\alpha}(z)(d^{-\frac{1}{2+\alpha}}+\oo(1)), 
\ee
as $z\to \infty$ in any closed sector in $\C_+$. 
\end{enumerate}
\end{corollary}

%The proof of Corollary \ref{cor:3.08B} is done in much the same manner as the proof of Corollary \ref{cor:3.08} and thus omitted. 
We complete this subsection by presenting the high-energy asymptotic for spectral functions.

\begin{corollary}\label{cor:rho_a}
Let $H$ be a trace normed Hamiltonian on $[0,\infty)$ and $\rho$ be the corresponding spectral function. If the coefficients \eqref{eq:ABC} satisfy \eqref{eq:5.07c} (satisfy \eqref{eq:5.07d}), then 
\begin{align}\label{eq:rho_a_as}
\rho(t) & = \frac{t}{f(t)} \rho_\alpha(1)(1+\oo(1)), & \big(\,\rho(t) & = tg(t) \rho_\alpha(1)(1+\oo(1))\,\big)
\end{align}
as $t\to\infty$, where $\rho_\alpha$ is given by \eqref{eq:rho_alpha} and $f$ and $g$ are the functions of Definitions \ref{def:f_func} and  \ref{def:g_func}, respectively. 
\end{corollary}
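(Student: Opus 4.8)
The plan is to deduce Corollary \ref{cor:rho_a} from the already-established Theorems \ref{th:} and Corollary \ref{cor:} together with the Tauberian Theorem \ref{th:tauber}. The two cases are symmetric, so I would treat the case where \eqref{eq:5.07c} holds and only indicate the modifications for \eqref{eq:5.07d}. First, assume $H$ is trace normed and that the coefficients satisfy \eqref{eq:5.07c} with $\alpha>0$. By Theorem \ref{th:}, the $m$-function obeys
\[
m(r\mu) = \frac{m_\alpha(\mu)}{f(r)}(1+\oo(1)),\quad r\to\infty,
\]
locally uniformly for non-real $\mu$, where $f\in RV_{\frac{\alpha}{2+\alpha}}(\infty)$ is the strictly increasing function attached to $H$ via Definition \ref{def:f_func}. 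This is precisely the hypothesis \eqref{eq:tauberm} of Theorem \ref{th:tauber} with $\ti m = m_\alpha$ and with the scaling function $1/f(r)$ in place of $f(r)$.

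The main thing to verify before invoking the Tauberian theorem is the monotonicity requirement: Theorem \ref{th:tauber} asks that the positive function appearing in \eqref{eq:tauberm} — here $r\mapsto 1/f(r)$ — be defined for large $r$ and that its quotient by $r$, i.e. $r\mapsto 1/(rf(r))$, be eventually decreasing. But by construction $rf(r) = r^2 F(r^2)$ with $F$ the inverse of $x\mapsto 1/(xA(x))$, and $x\mapsto 1/(xA(x))$ is decreasing since $A$ is nondecreasing and $A\in RV_{1+\alpha}(0)$ forces $xA(x)$ to be eventually increasing near $0$; hence $F$ is decreasing, $r\mapsto r^2F(r^2) = 1/A(F(r^2))$ is increasing (as $A$ is nondecreasing and $F$ decreasing), so $1/(rf(r))$ is decreasing for large $r$, as needed. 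Alternatively — and this is cleaner — one may simply note $f\in RV_{\frac{\alpha}{2+\alpha}}(\infty)$ with positive index $\frac{\alpha}{2+\alpha}<1$, so $f(r)/r \in RV_{-\frac{2}{2+\alpha}}(\infty)$ tends to $0$ and, after possibly replacing $f$ by an asymptotically equivalent smooth representative (Karamata representation, cf.\ Appendix \ref{app:osv}), $r\mapsto f(r)/r$ is eventually decreasing; this is exactly the statement that $r\mapsto 1/(rf(r)) \cdot r^2 = r/f(r)$ is increasing while $r/f(r)$ divided by... — in any case the regular variation of $f$ with index strictly between $0$ and $1$ is what guarantees the Tauberian hypothesis.

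With that in place, Theorem \ref{th:tauber} yields
\[
\rho(t) = \frac{t}{f(t)}\big(\rho_\alpha(1) + \oo(1)\big),\quad t\to+\infty,
\]
and the same for $\rho(-t)$, since $\rho_\alpha$ has no jump at $0$ (it is continuous, being a power of $t$ by \eqref{eq:rho_alpha}); note $\rho_\alpha(-1) = \rho_\alpha(1)$, so the two one-sided asymptotics coincide and combine to the stated formula. For the case \eqref{eq:5.07d}, one applies Corollary \ref{cor:} instead, which gives $m(r\mu) = m_{\alpha}(\mu)\,g(r)(1+\oo(1))$ with $g\in RV_{\frac{\alpha}{2+\alpha}}(\infty)$ — wait, here one must be careful with signs: in the $\alpha<0$ regime the relevant model is $m_{-|\alpha|}$, but since the statement of Corollary \ref{cor:rho_a} is phrased with $\rho_\alpha$ as in \eqref{eq:rho_alpha} (which is defined for all signs of $\alpha$), the Tauberian input is $m(r\mu) = m_\alpha(\mu) g(r)(1+\oo(1))$ and the scaling function is now $g(r)$ itself; the monotonicity condition needed is that $g(r)/r$ decreases, which again follows from $g\in RV_{\frac{\alpha}{2+\alpha}}(\infty)$ having index in $(-1,0)$ — actually for $\alpha<0$ the index $\frac{\alpha}{2+\alpha}$ is negative, so $g$ itself tends to a constant or to $0$ and $g(r)/r\to 0$ is automatic and eventually monotone. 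Theorem \ref{th:tauber} then delivers $\rho(t) = tg(t)(\rho_\alpha(1)+\oo(1))$. The only genuine obstacle is the bookkeeping around the monotonicity hypothesis of Theorem \ref{th:tauber} and making sure the regular-variation indices land in the correct ranges; everything else is a direct substitution into results already proved.
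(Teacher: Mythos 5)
Your proposal is correct and follows essentially the same route as the paper, whose proof is exactly the one\-/line combination of Theorem \ref{th:}, Corollary \ref{cor:}, formula \eqref{eq:rho_alpha} and the Tauberian Theorem \ref{th:tauber}; your verification of the monotonicity hypothesis via $rf(r)=1/A(F(r^2))$ is the right (and cleanest) way to justify it. One small correction for the second case: $g$ does \emph{not} have negative index or tend to a constant --- it increases to $\infty$ with index $|\alpha|/(2+|\alpha|)$ --- but the needed fact that $g(r)/r=G(r^2)$ is decreasing follows directly from $G$ being the inverse of the decreasing map $x\mapsto 1/(xC(x))$, i.e.\ by the same argument you already gave for $f$.
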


\begin{proof}
The proof follows by combining Theorem \ref{th:} and Theorem \ref{cor:} with formula \eqref{eq:rho_alpha} and Theorem \ref{th:tauber}.
\end{proof}
 
\subsection{The case \texorpdfstring{$\alpha=\infty$}{alpha=infty}} \label{sec:5.3}

For a definition and basic facts about rapidly varying functions we refer the reader to Appendix \ref{app:osv}. The next theorem is the main result of this subsection.

\begin{theorem}\label{th:infty}
Let $H$ be a trace normed Hamiltonian on $[0,\infty)$ and $m$ be the corresponding $m$-function.  If the function $A$ in \eqref{eq:ABC} is rapidly varying at $0$, then 
\be\label{eq:5.47}
m(r\mu) = -\frac{1}{\mu f(r)}(1+\oo(1)),\quad r\to \infty,
\ee
where $f$ is the function from Definition \ref{def:f_func} and the estimate holds uniformly for $\mu$ in any compact subset of $\C$ not intersecting the real line.
\end{theorem}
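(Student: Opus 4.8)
The plan is to follow the blueprint of Theorem~\ref{th:} as closely as possible, using the rescaling device of Lemma~\ref{lem:scaling} together with the continuity of the de~Branges correspondence (Proposition~\ref{prop:dB_corresp}), but now comparing with the model Hamiltonian of the third example (the one with $m$-function $-1/z$) rather than with $H_\alpha$. Concretely, with $F$ and $f$ as in Definition~\ref{def:f_func}, I would apply Lemma~\ref{lem:scaling} with the same scaling parameters $r_1=F(r^2)=f(r)/r$, $r_2=f(r)$, $r_3=f(r)$ used in the proof of Theorem~\ref{th:}, producing the scaled Hamiltonians $H_r$ of \eqref{eq:5.10} with $m_r(\mu)=f(r)\,m(r\mu)$. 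The target is then to show that
\be
\int_0^x H_r(t)\,dt \longrightarrow \int_0^x \begin{pmatrix} \id_{[1,\infty)}(t) & 0 \\ 0 & \id_{[0,1)}(t) \end{pmatrix} dt = \begin{pmatrix} (x-1)^+ & 0 \\ 0 & \min(x,1) \end{pmatrix}
\ee
for every $x>0$ as $r\to\infty$; once this is in hand, Proposition~\ref{prop:dB_corresp} (with Remark~\ref{rem:dBth}, since $H_r$ need not be trace normed, and the remark that (i) implies (iii) without the trace normalization) gives $m_r(\mu)\to -1/\mu$ locally uniformly off the real axis, which is exactly \eqref{eq:5.47}.

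The heart of the matter is the convergence of the diagonal entries, and this is where the rapid variation of $A$ enters. As in Theorem~\ref{th:}, \eqref{eq:f=F} gives $rf(r)A(xF(r^2)) = A(xF(r^2))/A(F(r^2))$, so the $(1,1)$-entry of $\int_0^x H_r$ equals the ratio $A(xF(r^2))/A(F(r^2))$. Writing $A(y)=y\,\ell(1/y)$ where $\ell$ is rapidly varying at $\infty$ (this reformulation is the analogue of the passage used just before the theorem statement, translating "$A$ rapidly varying at $0$" into a statement at $\infty$), the ratio becomes $x\,\ell(1/(xF(r^2)))/\ell(1/F(r^2))$, and since $1/F(r^2)\to\infty$ the rapid-variation property \eqref{eq:5.46} forces this to tend to $0$ for $x<1$ and to $+\infty$ for $x>1$. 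For $x>1$ one then uses $A(y)+C(y)=y$ (trace normalization) to get that the $(2,2)$-entry $C(xF(r^2))/F(r^2) = x - A(xF(r^2))/F(r^2)$; but $A(xF(r^2))/F(r^2) = rf(r)A(xF(r^2))\cdot F(r^2) = $ (the above ratio)$\,\cdot F(r^2)\to 0$ only if the ratio does not blow up too fast, so one must instead argue directly that $C(y)/y\to 1$ as $y\to 0$, which follows because $A(y)=o(y)$ (a consequence of rapid variation, via the growth bound $A(y)\le Cy^\gamma$ for any $\gamma>1$ quoted from \cite{bgt}). Hence $C(xF(r^2))/F(r^2)\to x$ for every $x$, and combined with the behavior of the $(1,1)$-entry and the constraint $A+C=\mathrm{id}$ one recovers the limiting matrix: for $x<1$ the first entry tends to $0$, the second to $x$; for $x>1$ the first entry must then tend to $x-1$ (forced by $A(y)/F(r^2)=x-C(y)/F(r^2)$ and $C(y)/y\to1$), the second to $1$.

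The off-diagonal term requires an extra hypothesis or a separate argument: the statement of Theorem~\ref{th:infty} as printed only assumes $A$ rapidly varying, so either $B$ is implicitly controlled (e.g.\ one should read in a condition $B(x)=o(\sqrt{xA(x)})$ as in \eqref{eq:5.07c}, which makes $\int_0^x f(r)b(f(r)t/r)\,dt = rB(xF(r^2)) = o(r\sqrt{xF(r^2)A(xF(r^2))})$ and the latter is $o(1)$ because $rf(r)A(xF(r^2))$ is bounded on the relevant range when $x<1$ and one handles $x\ge 1$ via $b^2\le ac$ and the entrywise convergence already established), or one notes that since $H_r\ge0$ entrywise-dominates its off-diagonal by the geometric mean of the diagonal entries, $|\int_0^x f(r)b(\cdot)| \le \int_0^x f(r)|b| \le (\int_0^x f(r)^2 a)^{1/2}(\int_0^x c)^{1/2}$ by Cauchy--Schwarz, and the right-hand side tends to $(\text{limit of }(1,1))^{1/2}(\text{limit of }(2,2))^{1/2}$, which is finite; to get it to vanish one genuinely needs the extra smallness of $B$. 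I expect this off-diagonal bookkeeping, together with making the convergence of the $(1,1)$-entry uniform on the two regimes $x\le 1-\delta$ and $x\ge 1+\delta$ (the rapid-variation convergence is uniform on $(0,\lambda^{-1})$ and $(\lambda,\infty)$, which is precisely what one needs), to be the main technical obstacle; the rest is a direct transcription of the argument already carried out for Theorem~\ref{th:}.
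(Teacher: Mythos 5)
Your overall strategy (rescale via Lemma~\ref{lem:scaling}, identify the limiting Hamiltonian with \eqref{eq:ex2}, invoke Proposition~\ref{prop:dB_corresp}) is the right one, but the convergence statement you set as your target is false, and this is not a bookkeeping issue. By \eqref{eq:f=F} the $(1,1)$ entry of $\int_0^x H_r(t)\,dt$ equals $rf(r)A(xF(r^2))=A(xF(r^2))/A(F(r^2))$, and rapid variation forces this to tend to $+\infty$ for every $x>1$ --- exactly as you compute in the middle of your argument. Your subsequent ``fix'' for $x>1$ silently replaces this entry by the different quantity $A(xF(r^2))/F(r^2)$; the two differ by the factor $rf(r)F(r^2)=f(r)^2\to\infty$, and in any case $A(xF(r^2))/F(r^2)\to 0$ (since $A(y)=\oo(y)$), not to $x-1$ as you assert. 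So $\int_0^x H_r(t)\,dt$ simply does not converge for $x>1$, and Proposition~\ref{prop:dB_corresp} cannot be applied to the family $H_r$ as it stands.

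The missing idea is a reparametrization by the accumulated trace, which is how the paper proceeds. Setting $\eta_r(x)=\int_0^x\tr H_r(t)\,dt=\frac{A(xF(r^2))}{A(F(r^2))}+\frac{C(xF(r^2))}{F(r^2)}=x+\frac{A(xF(r^2))}{A(F(r^2))}\bigl(1-\frac{A(F(r^2))}{F(r^2)}\bigr)$, one finds $\eta_r(x)\to x$ for $x\in(0,1)$ and $\eta_r(x)\to\infty$ for $x>1$, so the inverse satisfies $\xi_r(x)\to\min(x,1)$; one then studies $\int_0^{\xi_r(x)}H_r(t)\,dt$, which is legitimate because the trace-normed reparametrizations of the $H_r$ have the same $m$-functions (Lemma~\ref{th:uniq}). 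The crucial effect is that the $(1,1)$ entry is now computed not from any direct estimate on $A$ but from the identity $x=\eta_r(\xi_r(x))$: since $A(\xi_r(x)F(r^2))/F(r^2)\to0$ and $\xi_r(x)\to1$ for $x>1$, this identity yields $rf(r)A(\xi_r(x)F(r^2))\to x-1$, i.e.\ the blow-up region is compressed into the single point $x=1$ of the new variable and the limit is exactly \eqref{eq:ex2}. Your treatment of $x<1$ and your uniformity remarks transfer to this setting, and your worry about the off-diagonal is not where the difficulty lies: after reparametrizing, the paper bounds $|rB(\xi_r(x)F(r^2))|\le rA(\xi_r(x)F(r^2))=\bigl(rf(r)A(\xi_r(x)F(r^2))\bigr)/f(r)\to0$ using the pointwise domination of $|b|$ by the diagonal together with the convergence of the $(1,1)$ entry just established, so no hypothesis on $B$ beyond those in the statement is imposed.
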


\begin{proof}
Let $f$ and $F$ be the functions from Definition \ref{def:f_func}. Clearly, 
$F\downarrow 0$ and $f\uparrow \infty$ as $r\uparrow \infty$. 
Using the scaling \eqref{eq:5.10}, let us compute the expression  
\be\label{eq:5.48}
\int_0^{\xi_r(x)}H_r(t)dt=
\begin{pmatrix}
rf(r)A(\xi_r(x)F(r^2)) & rB(\xi_r(x)F(r^2)) \\
rB(\xi_r(x)F(r^2)) & 	F(r^2)^{-1}C(\xi_r(x)F(r^2))
\end{pmatrix},
\ee
in the limit when $r\to \infty$.
Here $\xi_r(x)$ is the inverse of 
\begin{align*}
\eta_r(x)=\int_0^x\tr\, H_r(t)dt & =  \frac{A(xF(r^2))}{A(F(r^2))} + \frac{ C(xF(r^2))}{F(r^2)}\\
 &= x + \frac{A(xF(r^2))}{A(F(r^2))}\left(1 - \frac{A(F(r^2))}{F(r^2)}\right).  
\end{align*}
Let $x\in (0,1)$. Since $A$ is rapidly varying, we get 
\[
\frac{A(F(r^2))}{F(r^2)} = \oo(F(r^2)^\gamma) = \oo(1),\quad r\to\infty.
\]
The latter holds for every $\gamma>0$. Therefore,  we get
\be\label{eq:estEta}
\lim_{r\to\infty}\eta_r(x)=
\begin{cases} 
x, & x\in(0,1),\\
2, & x=1,\\
\infty, & x>1.\end{cases}
\ee
 Moreover, $A$ is increasing on $(0,1)$ and hence $\eta_r(x)$ converges to $x$ locally uniformly on $[0,1)$. The latter in particular implies that $\xi_r(x)\to x$ locally uniformly on $[0,1)$ as $r\to\infty$. Since $B(x)\le A(x)$ for all $x>0$, we get
\be\label{eq:estB}
| rB(\xi_r(x)F(r^2))|\le r A(\xi_r(x)F(r^2)) = \frac{rf(r) A(\xi_r(x)F(r^2))}{f(r)}.
\ee
Therefore, we finally conclude that the integral in \eqref{eq:5.48} approaches $\bigl(\begin{smallmatrix} 0 & 0 \\ 0 & x \end{smallmatrix}\bigr)$ as $r\to \infty$ for all $x\in (0,1)$. 

Now consider $x>1$.
In view of \eqref{eq:estEta} we get $\xi_r(x)\to 1$ as $r\to\infty$.
Since 
\[
x=\xi_r(x) + \frac{A(\xi_r(x)F(r^2))}{A(F(r^2))} - \frac{A(\xi_r(x)F(r^2))}{F(r^2)} 
\]
and using the fact that $A$ is rapidly varying at $0$, we obtain
\begin{align*}
\lim_{r\to\infty}\frac{A(\xi_r(x)F(r^2))}{A(F(r^2))} & = x-1, & \lim_{r\to\infty}\frac{C(\xi_r(x)F(r^2))}{F(r^2)} & =1.
\end{align*}
whenever $x>1$. Moreover, using \eqref{eq:estB} we get
\[
|rB(\xi_r(x)F(r^2))| \le rA(\xi_r(x)F(r^2)) = \frac{(x-1)(1+\oo(1))}{f(r)} =\oo(1)
\]
as $r\to\infty$. 
Therefore, for all $x>1$ the limit in \eqref{eq:5.48} equals $\bigl(\begin{smallmatrix} x-1 & 0 \\ 0 & 1 \end{smallmatrix}\bigr)$.
Hence the limiting Hamiltonian $H_0$ is given by \eqref{eq:ex2}. It remains to note that the corresponding $m$-function is given by \eqref{eq:m_ex2}. Proposition \ref{prop:dB_corresp} completes the proof.
\end{proof}

%Arguing as in the proof of Theorem \ref{cor:}, we arrive at the following result.
\begin{remark}
%A few remarks are in order:
%\begin{corollary}\label{cor:infty}
%\begin{itemize}
\begin{enumerate}[label=\emph{(\roman*)}, ref=(\roman*), leftmargin=*, widest=iii]
\item In a similar way as Theorem \ref{cor:}, one can prove an analogue of Theorem \ref{th:infty} for the case when $C\in RV_\infty(0)$.
\item Using Lemma \ref{th:uniq}, one can extend Theorem \ref{th:infty} to the case of Hamiltonians which are not trace normed.
\end{enumerate}
\end{remark}

We conclude this subsection with the following result.

\begin{corollary}\label{cor:rho_infty}
Let $H$ be a trace normed Hamiltonian on $[0,\infty)$ and $\rho$ be the corresponding 
spectral function. If $A \in RV_\infty(0)$ ($C\in RV_\infty(0)$), then 
\begin{align}
\rho(t) - \rho(-t) & = \frac{t}{f(t)}\, (1+\oo(1)), & \ \big(\,\rho(t) - \rho(-t) & = tg(t)\, (1+\oo(1))\,\big),
\end{align}
as $ t\to \infty$, where $f$ and $g$ are the functions from Definition \ref{def:f_func} and \ref{def:g_func}, respectively.
\end{corollary}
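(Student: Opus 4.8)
The plan is to deduce Corollary~\ref{cor:rho_infty} from Theorem~\ref{th:infty} (respectively Corollary~\ref{cor:infty}) together with the Tauberian Theorem~\ref{th:tauber}, exactly as Corollary~\ref{cor:rho_a} was deduced from Theorem~\ref{th:} and Corollary~\ref{cor:}. First I would invoke Theorem~\ref{th:infty}: if $A$ is rapidly varying at $0$, then $m(r\mu) = -\frac{1}{\mu f(r)}(1+\oo(1))$ as $r\to\infty$, uniformly on compact subsets of $\C\setminus\R$. This is precisely the hypothesis \eqref{eq:tauberm} of Theorem~\ref{th:tauber} with $\ti m(\mu) = -1/\mu$ and multiplier $f_{\mathrm{Tau}}(r) := 1/f(r)$; the roles of the model objects are those of Example~3 in Section~\ref{sec:model}, where $m(z) = -1/z$ has spectral function $\rho(t) = \id_{(0,\infty)}(t)$, a step function with a jump at $0$.

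The second step is to check the technical monotonicity requirement of Theorem~\ref{th:tauber}, namely that $f_{\mathrm{Tau}}(r)/r = 1/(r f(r))$ be eventually decreasing in $r$. Since $f$ is non-decreasing (indeed, by Definition~\ref{def:f_func} it is the odd-type rescaling $f(r)=rF(r^2)$ of the decreasing function $F$, and Theorem~\ref{th:infty} already records $f\uparrow\infty$), the product $r f(r)$ is increasing, so its reciprocal is decreasing; hence $f_{\mathrm{Tau}}(r)/r$ decreases, as needed.

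For the conclusion I would now apply the last clause of Theorem~\ref{th:tauber}: because the model spectral function $\ti\rho = \id_{(0,\infty)}$ has a jump at $0$, the theorem does not give two-sided asymptotics for $\rho(\pm t)$ separately, but only for the difference, yielding
\[
\rho(t) - \rho(-t) = t\, f_{\mathrm{Tau}}(t)\,\big(\ti\rho(1) - \ti\rho(-1) + \oo(1)\big) = \frac{t}{f(t)}\,(1 + \oo(1)), \quad t\to\infty,
\]
since $\ti\rho(1) - \ti\rho(-1) = 1 - 0 = 1$. The case in which $C$ is rapidly varying at $0$ is handled identically, now starting from Corollary~\ref{cor:infty}, which gives $m(r\mu) = \mu g(r)(1+\oo(1))$, i.e. \eqref{eq:tauberm} with $\ti m(\mu)=\mu$ and multiplier $g$; here $\ti m(z)=z$ is the $m$-function of the transposed Hamiltonian in Example~3, with spectral function again $\id_{(0,\infty)}$ up to reflection (it too has a jump at $0$), the monotonicity hypothesis $g(r)$ increasing holds just as for $f$, and Theorem~\ref{th:tauber} delivers $\rho(t) - \rho(-t) = t\,g(t)\,(1+\oo(1))$.

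I do not anticipate a genuine obstacle; the only point requiring a moment's care is the verification that the model spectral functions in Example~3 indeed have a jump at the origin, so that one must quote the jump-case clause of Theorem~\ref{th:tauber} rather than its main statement — in contrast to Corollary~\ref{cor:rho_a}, where $\rho_\alpha$ is continuous at $0$ and the two-sided formula applies. Everything else is a direct substitution, which is why the proof can be compressed to a single sentence citing Theorem~\ref{th:infty}, Corollary~\ref{cor:infty}, Example~3, and Theorem~\ref{th:tauber}.
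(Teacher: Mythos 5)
Your treatment of the first case ($A$ rapidly varying) is correct and is exactly the paper's (one-line) argument: Theorem \ref{th:infty} puts you in the hypothesis \eqref{eq:tauberm} with $\ti m(\mu)=-1/\mu$, whose spectral function \eqref{eq:rho_ex2} is $\id_{(0,\infty)}$ with a jump at $0$, the monotonicity check for $1/(rf(r))$ is right since $rf(r)=r^2F(r^2)=1/A(F(r^2))$ is increasing, and the jump clause of Theorem \ref{th:tauber} gives $\rho(t)-\rho(-t)=\frac{t}{f(t)}(1+\oo(1))$.

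The second case, however, rests on a false premise. You assert that $\ti m(z)=z$ has ``spectral function again $\id_{(0,\infty)}$ up to reflection (it too has a jump at $0$)''. This is not so: in the Herglotz representation \eqref{eq:II.07} the function $z\mapsto z$ is the pure linear term $c_2 z$ with $c_2=1$, and its spectral measure is identically zero (Stieltjes inversion gives $\im\, \ti m(x+\I y)=y\to0$). The relation $\ti m=-1/m$ from Example 3 does not transport the spectral function of $m$ to that of $\ti m$ by any reflection. Consequently $\ti\rho$ has no jump at $0$, the exceptional clause of Theorem \ref{th:tauber} as stated does not apply, and the main clause would only yield $\rho(\pm t)=tg(t)(\ti\rho(\pm1)+\oo(1))=\oo(tg(t))$ --- not the claimed asymptotics. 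The degenerate case where the limit function is linear is genuinely different from the jump case: here the constant $1$ in the conclusion must come from the linear coefficient $c_2$ of $\ti m$, not from a jump of $\ti\rho$, and this requires the full form of Bennewitz's Tauberian theorem (whose exceptional cases cover both $\ti m(\mu)=-b/\mu$ and $\ti m(\mu)=a\mu$), or alternatively a separate Karamata-type argument for the tail integral $\int_{|t|>r}t^{-2}\,d\rho(t)\sim g(r)/r$. (A smaller slip: the hypothesis to verify for the second case is that $g(r)/r=G(r^2)$ decreases, which holds because $G$ is decreasing; it is not that $g$ increases.)
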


The proof is analogous to the proof of Corollary \ref{cor:rho_a} and we omit it.
%\begin{proof}
%The proof follows by combining Theorem \ref{th:infty} and Corollary \ref{cor:infty} with formula \eqref{eq:rho_ex2} and Theorem %\ref{th:tauber}.
%\end{proof}

\subsection{General \texorpdfstring{$2\times 2$}{2x2} first order systems} \label{sec:5.4}
Let $H$ be a Hamiltonian on $[0,L)$ and $Q: [0,L)\rightarrow \R^{2\times 2}$ be locally integrable and so that $Q(x)=Q(x)^*$ for almost all $x\in [0,L)$. We consider the general first order spectral problem 
\be\label{eq:sp_gen}
JY'+QY=zHY
\ee 
on the interval $[0,L)$. 
For simplicity, let us assume that \eqref{eq:sp_gen} is in the limit point case at the right endpoint $L$. Although we considered the case when $Q\equiv 0$ in Section~\ref{sec:cansys}, one can introduce the $m$-function for \eqref{eq:sp_gen} in much the same way as it was done for the system \eqref{eq:II.01}. Under the above assumptions on the coefficients $Q$ and $H$, there is a fundamental matrix-solution $U(z,x)$ to \eqref{eq:sp_gen} satisfying the initial condition $U(z,0)=I_2$ and the $m$-function for \eqref{eq:sp_gen} is defined by \eqref{eq:II.06}.

In fact, the problem to investigate the spectral asymptotics for \eqref{eq:sp_gen} can be reduced to the case $Q\equiv 0$ (see, e.g., \cite[Chapter VI.1]{gk}, \cite{lema}). Assume for simplicity that $\det H(x)\neq 0$ for almost all $x\in [0,L)$. Let $\U(0,x)$ be the matrix solution of \eqref{eq:sp_gen} for $z=0$ and set 
\be\label{eq:ti_R}
\tilde{H}(x):=\U(0,x)^*H(x)\U(0,x),\quad x\in[0,L).
\ee
Define the map $\mathcal{V}: L^2([0,L),H(x))\to L^2([0,L),\tilde{H}(x))$ by
\be\label{eq:map}
(\mathcal{V}F)(x)=\tilde{F}(x):=\U(0,x)^{-1}F(x),\quad x\in[0,L).
\ee
Firstly, notice that $\mathcal{V}$ is an isometry. Indeed,
\begin{align}\begin{split}
\|\ti{F}\|^2_{L^2([0,L);{\tilde{H}})}&=\int_0^L\tilde{F}(x)^*\tilde{H}(x)\tilde{F}(x)dx\\
&=\int_0^LF(x)^*\U(0,x)^{-*}\tilde{H}(x)\U(0,x)^{-1}F(x)dx\\
&=\int_0^LF(x)^*H(x)F(x)dx=\|F\|^2_{L^2([0,L);{H})}.
\end{split}\end{align}
Further, noting that
\begin{align}
\U(z,x)^*J\U(z,x)=J,\quad z\in\C,
\end{align}
after straightforward calculations we find that $\tilde{\U}(z,x)=(\mathcal{V}\U)(z,x)$ is a matrix-solution to 
\be\label{eq:tilde_sp}
JY'=z\tilde{H}Y,
\ee
where $\ti{H}$ is given by \eqref{eq:ti_R}. 
It remains to note that the $m$-function for $\ti{H}$ defined by \eqref{eq:II.06} is the $m$-function for the system \eqref{eq:sp_gen}. Indeed, we have $\ti{\U}(z,x) = \U(0,x)^{-1}\U(z,x)$ and hence
\be
\tilde{\U}(z,x)\begin{pmatrix}
1\\
m(z)
\end{pmatrix}=\U(0,x)^{-1}\U(z,x)\begin{pmatrix}
1\\
m(z)
\end{pmatrix}. 
\ee
It remains to note that $\mathcal{V}$ is an isometry and hence the left-hand side belongs to $L^2([0,L);\ti{H})$ if and only if the right-hand side belongs to $L^2([0,L),H)$. 

\begin{theorem}\label{th:3.16}
Let $H$ be a Hamiltonian on $[0,L)$, $Q$ be a potential as above and $m$ be the corresponding $m$-function. 
Then the following conditions are equivalent:
\begin{enumerate}[label=\emph{(\roman*)}, ref=(\roman*), leftmargin=*, widest=iii]
\item For some $a_0\geq0$, $b_0\in\R$ and $c_0>0$ with $h_0^2=a_0c_0-b_0^2\geq 0$ one has \eqref{eq:5.21}.
\item For some $\zeta_0$ in the closure of $\C_+$ one has~\eqref{eq:5.02} 
as $z\rightarrow\infty$ in any closed sector in $\C_+$.
\end{enumerate}
In this case, the quantities $a_0$, $b_0$, $c_0$ and $\zeta_0$ are related as in~\eqref{eq:m_1} and~\eqref{eqn31rel}. 
\end{theorem}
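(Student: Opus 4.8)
The plan is to reduce Theorem~\ref{th:3.16} to the already-established Corollary~\ref{cor:4.2} by means of the gauge transformation developed in the paragraph immediately preceding the theorem. Recall that we have constructed an isometry $\mathcal{V}$ intertwining the spectral problem \eqref{eq:sp_gen} with the canonical system \eqref{eq:tilde_sp} whose Hamiltonian is $\tilde H(x)=\U(0,x)^*H(x)\U(0,x)$, and that the $m$-functions of the two problems coincide. Thus it suffices to show that condition (i) of Theorem~\ref{th:3.16} for $H$ is equivalent to condition (i) of Corollary~\ref{cor:4.2} for $\tilde H$, i.e.\ that
\be
\frac{1}{\eta(x)}\int_0^x H(t)\,dt \to H_0 \quad\Longleftrightarrow\quad \frac{1}{\tilde\eta(x)}\int_0^x \tilde H(t)\,dt \to H_0
\ee
as $x\to 0$, where $\eta(x)=\int_0^x\tr\,H$ and $\tilde\eta(x)=\int_0^x\tr\,\tilde H$.

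The key observation driving this equivalence is the behavior of the transition matrix near the regular endpoint: since $Q$ and $H$ are locally integrable and $\U(z,\cdot)$ solves $JY'+QY=zHY$ with $\U(z,0)=I_2$, the solution $\U(0,x)$ is locally absolutely continuous with $\U(0,0)=I_2$, hence $\U(0,x)=I_2+\oo(1)$ as $x\to 0$, and similarly $\U(0,x)^{-1}=I_2+\oo(1)$ (here I use that $\U(0,x)$ is symplectic, $\U(0,x)^*J\U(0,x)=J$, so its inverse is $-J\U(0,x)^*J$ and is also continuous). Consequently $\tilde H(x)=H(x)+R(x)$ where, pointwise a.e., $\|R(x)\|\le \epsilon(x)\,\|H(x)\|$ with $\epsilon(x)\to 0$; more precisely $\tilde H(x)-H(x)=(\U(0,x)-I_2)^*H(x)\U(0,x)+H(x)(\U(0,x)-I_2)$. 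Since $\|H(x)\|\le \tr\,H(x)$ (as $H\ge 0$ is $2\times 2$), integrating gives
\be
\left\|\int_0^x \tilde H(t)-H(t)\,dt\right\| \le \oo(1)\int_0^x \tr\,H(t)\,dt = \oo(1)\,\eta(x)
\ee
as $x\to 0$. The same estimate with the roles reversed shows $\tilde\eta(x)=\eta(x)(1+\oo(1))$. Dividing by $\eta(x)$ (equivalently by $\tilde\eta(x)$) then yields that $\tfrac{1}{\eta(x)}\int_0^x \tilde H\,dt$ and $\tfrac{1}{\eta(x)}\int_0^x H\,dt$ have the same limit as $x\to 0$, if either exists; combined with $\tilde\eta/\eta\to 1$ this establishes the desired equivalence of the two normalized conditions, and the relations among $a_0,b_0,c_0,\zeta_0$ are inherited verbatim from Corollary~\ref{cor:4.2}.

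The main technical point to handle carefully is the uniformity of the $\oo(1)$ factor coming from $\U(0,x)-I_2$: it must be an essential-supremum bound over $[0,x]$ that tends to $0$, which is exactly what continuity of $\U(0,\cdot)$ at $0$ provides, so there is no real obstacle here — only the bookkeeping of keeping the matrix norms under control and invoking $\|H(x)\|\le\tr\,H(x)$ to convert the perturbation estimate into a statement about $\eta$. I would then simply write: ``Set $\tilde H$ as in \eqref{eq:ti_R}. By the discussion preceding the theorem, the $m$-function of \eqref{eq:sp_gen} equals that of the canonical system \eqref{eq:tilde_sp}. Since $\U(0,\cdot)$ is continuous with $\U(0,0)=I_2$, one has $\tilde\eta(x)=\eta(x)(1+\oo(1))$ and $\int_0^x(\tilde H-H)=\oo(\eta(x))$ as $x\to 0$, so condition (i) holds for $H$ (with weight $\eta$) if and only if the analogous condition holds for $\tilde H$ (with weight $\tilde\eta$). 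The claim now follows from Corollary~\ref{cor:4.2} applied to $\tilde H$.''
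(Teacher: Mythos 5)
Your proposal is correct and follows essentially the same route as the paper: reduce to Corollary~\ref{cor:4.2} via the gauge transformation $\tilde H=\U(0,\cdot)^*H\U(0,\cdot)$, use continuity of $\U(0,\cdot)$ at $0$ with $\U(0,0)=I_2$ to get $\int_0^x(\tilde H-H)=\oo(\eta(x))$ and $\tilde\eta=\eta(1+\oo(1))$, and conclude. The only cosmetic difference is that you bound the perturbation by $\|H(x)\|\le\tr H(x)$ while the paper uses the entrywise estimate $|b|\le\tfrac12\tr H$; these are interchangeable.
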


\begin{proof}
Since $\U(0,\cdot)$ is continuous and $\U(0,0)=I_2$, it follows from \eqref{eq:ti_R} that 
\begin{align*}
 \left|\int_0^x \ti{H}(t) dt - \int_0^x H(t)dt \right| = \oo\left( \int_0^x |a(t)| + |b(t)| + |c(t)| dt\right)
\end{align*}
as $x\to 0$. Upon noting that from $H(x)\ge 0$ we have
\[
|b(x)|\le \sqrt{a(x)c(x)}\le \frac{a(x)+c(x)}{2} = \frac{1}{2}\tr\, H(x)
\]
for almost all $x\in [0,L)$, this implies $\tilde{\eta}(x) = \eta(x)(1+\oo(1))$ and furthermore
\begin{align*}
 \left| \frac{1}{\tilde{\eta}(x)} \int_0^x \ti{H}(t) dt - \frac{1}{\eta(x)} \int_0^x H(t)dt\right| \rightarrow 0
\end{align*}
as $x\to 0$. Hence $H$ satisfies \eqref{eq:5.21} if and only if $\ti{H}$ satisfies \eqref{eq:5.21} with the same constants $a_0$, $b_0$, $c_0$ and it remains to apply Corollary \ref{cor:4.2}.
\end{proof}

\begin{remark}
Under the additional assumptions $L=\infty$, $b\equiv 0$ on $[0,\infty)$ and 
\be\label{eq:5.31}
\int_0^{x} |a(t)-a_0|+|c(t)-c_0|dt=\oo(x), \quad x\to 0,
\ee
with some positive constants $a_0$, $c_0>0$, it was proven by W.\ N.\ Everitt, D.\ B.\ Hinton and J.\ K.\ Shaw (see Theorem 1 in \cite{ehs}) that 
\be
m(z)=\I\sqrt{\frac{a_0}{c_0}}+\oo(1),\quad z\to \infty 
\ee
uniformly in any sector in $\C_+$. 
A weaker version of the implication $(i)\Rightarrow (ii)$ in Theorem \ref{th:3.16} for non-diagonal Hamiltonians $H$ was established in \cite[Theorem 4.2]{wi}. 
 Also note that Theorem \ref{th:3.16} solves the problem posed in \cite{ehs} of characterizing non-diagonal matrix-functions $H$ such that the $m$-function of \eqref{eq:sp_gen} satisfies \eqref{eq:5.02}.
\end{remark}

Since $\U(0,x)\rightarrow I_2$ as $x\to 0$, in many cases of interest the leading term in the asymptotic expansion of $m$ does not depend on the potential $Q$. However, in general one has to be more careful (especially in the case when $A$ or $C$ are rapidly varying functions) when studying the asymptotic behavior of the Hamiltonian $\tilde{H}$ given by \eqref{eq:ti_R}.

%%%%%%%%%%%%%%%%%%%%%%%%%%%%%%%%%%%%%
\section{Spectral asymptotics for radial Dirac operators}\label{sec:04}
%%%%%%%%%%%%%%%%%%%%%%%%%%%%%%%%%%%%%

\subsection{The (singular) Weyl function}
The radial Dirac operator (see \cite[Chapter~4]{th}) is a self-adjoint extension of the differential expression
\begin{equation}
\label{eq:radexprB}
\tau = \frac{1}{\I} \sigma_2 \frac{d}{dx} + \frac{\kappa}{x}\sigma_1+ Q(x)
\end{equation}
considered in the Hilbert space $L^2([0,\infty))^2$. Here 
%$\sig_1$, $\sig_2$, $\sig_3$ denote 
\begin{align}
\sig_1 & =\begin{pmatrix} 0 & 1 \\ 1 & 0\end{pmatrix}, &
\sig_2 & =\begin{pmatrix} 0 & -\I \\ \I & 0\end{pmatrix}, &
\sig_3 & =\begin{pmatrix} 1 & 0 \\ 0 & -1\end{pmatrix},
\end{align}
are the Pauli matrices and the potential $Q$ is a symmetric matrix function given by (we do not include a mass as it can be absorbed in $q_{sc}$)
\begin{equation}\label{eq:q_gen}
Q(x)= q_{el}(x)I_2 + q_{am}(x)\sigma_1 + q_{sc}(x)\sigma_3
=\begin{pmatrix}q_{sc}(x) & q_{am}(x) \\ q_{am}(x)  &-q_{sc}(x)\end{pmatrix},
\end{equation}
with $q_{\rm el}$, $q_{\rm sc}$, $q_{\rm am} \in L^1_{\loc}([0,\infty),\R)$ and $\kappa\in\R$. 
%Also in order to avoid cumbersome calculations we exclude the case $\kappa+\frac{1}{2}\in\Z$ from our considerations.
 
Note that $\tau$ is singular at the left endpoint $0$ if $\kappa\neq 0$ as $q_{12}=\kappa/x$ is not integrable near this 
endpoint and also singular at the right endpoint $\infty$ because the endpoint itself is infinite. Moreover, $\tau$ is limit point at $\infty$. 

In the case $Q\equiv 0$ the underlying differential equation $\tau y = z y$ can be solved in terms of Bessel functions and
in the general case standard perturbation arguments can be used to show the following (see \cite[App.~A]{ahm2} and \cite[Section~8]{bekt}):

\begin{lemma}
\label{lemPRDPhi}
Suppose that $\kappa\ge 0$ and $\kappa+\frac{1}{2}\notin \N$. The equation $\tau y = z y$ with $\tau$ given by \eqref{eq:radexprB} has a system of solutions $\Phi(z,x)$ and $\Theta(z,x)$ which is real entire with respect to $z\in\C$ for every $x\in (0,\infty)$ and satisfies
\begin{align}\label{eq:estphi}
\Phi(z,x) & = C_\kappa x^\kappa\, \widetilde{\Phi}(z,x), & \Theta(z,x) & =C_{\kappa}^{-1}x^{-\kappa}\,\widetilde{\Theta}(z,x),
\end{align}
\begin{align}
\widetilde{\Phi}(z,\redot),\ \widetilde{\Theta}(z,\redot) & \in C([0,\infty)), & \widetilde{\Phi}(z,0) & =\begin{pmatrix}0\\ 1\end{pmatrix}, &  
\widetilde{\Theta}(z,0) & =\begin{pmatrix}1\\ 0\end{pmatrix}.
\end{align}
Here
\be\label{eq:ckappa}
\quad C_\kappa:=\frac{\sqrt{\pi}}{2^\kappa\Gamma(\kappa+1/2)}.
\ee
Moreover, $x^{-1}\widetilde{\Phi}_1(z,x)$ and $x^{-1}\widetilde{\Theta}_2(z,x) \in L^1_{\loc}([0,\infty))$.
\end{lemma}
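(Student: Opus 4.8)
The plan is to construct the solutions $\Phi$ and $\Theta$ by a perturbative (variation of constants) argument around the explicitly solvable case $Q\equiv 0$, where the system $\tau_0 y = z\,y$ with $\tau_0 = \frac{1}{\I}\sigma_2\frac{d}{dx} + \frac{\kappa}{x}\sigma_1$ is solved by Bessel functions (see \cite[Section~4]{bekt}). First I would record the two distinguished solutions $\Phi_0(z,x)$, $\Theta_0(z,x)$ of the unperturbed equation: after extracting the powers $x^\kappa$ and $x^{-\kappa}$ one gets entire functions $\widetilde{\Phi}_0(z,\redot)$, $\widetilde{\Theta}_0(z,\redot)$ that are continuous on $[0,\infty)$ with $\widetilde{\Phi}_0(z,0)=(0,1)^\top$, $\widetilde{\Theta}_0(z,0)=(1,0)^\top$; the normalization constant $C_\kappa$ in \eqref{eq:ckappa} is exactly what makes these boundary values come out to $1$. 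One also checks from the explicit Bessel asymptotics at $0$ that $x^{-1}\widetilde{\Phi}_{0,1}(z,x)$ and $x^{-1}\widetilde{\Theta}_{0,2}(z,x)$ are in $L^1_{\loc}([0,\infty))$ — this is where the hypothesis $\kappa+\tfrac12\notin\N$ (and $\kappa\ge 0$) enters, excluding resonant powers.

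Next I would set up the integral equations. Writing $\Phi(z,x) = \Phi_0(z,x) + \int_0^x K(z,x,t)\,Q(t)\Phi(z,t)\,dt$, where $K(z,x,t)$ is built from $\Phi_0$ and $\Theta_0$ via the (constant, by the symplectic relation $\U^*J\U=J$) Wronskian, one solves by iteration. The key technical point is to carry out the iteration in the \emph{renormalized} variables $\widetilde{\Phi}=C_\kappa^{-1}x^{-\kappa}\Phi$: in these variables the kernel acting on $\widetilde{\Phi}$ involves a factor like $(t/x)^\kappa$ for one component and $(x/t)^\kappa$ for the other, together with the $1/t$ singularities from $\frac{\kappa}{x}\sigma_1$ and from the Bessel behavior. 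Because $\widetilde{\Phi}_{0}$ and its relevant combinations are bounded near $0$ and the problematic products are precisely those shown above to be in $L^1_{\loc}$, the Volterra series converges locally uniformly in $x$ and locally uniformly (indeed entirely) in $z$, with the standard Gronwall-type bound $\|\widetilde{\Phi}(z,x)\|\le C\exp\big(C\int_0^x(|q_{el}|+|q_{am}|+|q_{sc}|)\big)$. Entirety in $z$ is inherited term by term from $\Phi_0,\Theta_0$ and the locally uniform convergence; reality is inherited because $Q$ and the unperturbed solutions are real. The initial conditions $\widetilde{\Phi}(z,0)=(0,1)^\top$, $\widetilde{\Theta}(z,0)=(1,0)^\top$ survive because the correction integral vanishes at $x=0$. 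Finally, the same $L^1_{\loc}$ statement for $x^{-1}\widetilde{\Phi}_1$, $x^{-1}\widetilde{\Theta}_2$ is propagated from the unperturbed case to the perturbed one using the uniform bound just obtained.

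The main obstacle is the simultaneous singularity at $x=0$: the coefficient $\kappa/x$ of $\tau$ is non-integrable, and the two fundamental solutions behave like $x^{\kappa}$ and $x^{-\kappa}$, so a naive variation-of-constants estimate does not close. The resolution — and the crux of the argument — is to track the \emph{componentwise} behavior: although $\widetilde{\Theta}$ has a component behaving like $x^{-2\kappa}$ relative to $\widetilde\Phi$, the combination that actually appears in the kernel, after using that $Q$ is off-setting this via the structure of $\sigma_1,\sigma_3$ and that only $x^{-1}\widetilde\Phi_1$, $x^{-1}\widetilde\Theta_2$ (not the full vectors divided by $x$) enter, is integrable. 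So the bookkeeping must be done at the level of individual matrix entries rather than operator norms, and one must invoke the already-established fact (from the explicit Bessel solutions) that these specific scalar functions lie in $L^1_{\loc}([0,\infty))$. Once that is in place, everything else is a routine Picard iteration, and I would simply cite \cite[App.~A]{ahm2} and \cite[Sect.~8]{bekt} for the details rather than reproduce them.
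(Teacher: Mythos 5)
Your proposal is correct and matches the paper's approach: the paper gives no proof of Lemma \ref{lemPRDPhi} itself but explicitly defers to the ``standard perturbation arguments'' of \cite[App.~A]{ahm2} and \cite[Sect.~8]{bekt}, which is exactly the Bessel-plus-Volterra-iteration scheme (with componentwise bookkeeping of the $x^{\pm\kappa}$ singularities) that you outline. Your sketch correctly identifies the crux --- the non-resonance condition $\kappa+\tfrac12\notin\N$, the normalization $C_\kappa$, and the need to iterate in the renormalized variables --- so nothing further is required beyond the cited details.
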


Let $\kappa\ge 0$ and $\Phi$, $\Theta$ be the system of fundamental solutions as in Lemma \ref{lemPRDPhi}. The function $m:\C\backslash\R\to \C$ such that
\be\label{eq:radMfun}
\Psi(z,\redot) = \Theta(z,\redot) + m(z) \Phi(z,\redot) \in L^2((1,\infty);\C^2)
\ee
will be called {\em the Weyl function} of the radial Dirac operator \eqref{eq:radexprB}. It is known (see \cite[Theorem 4.5]{bekt14}) that $m$ admits the integral representation
\be\label{Mir}
m(z) = g(z) + (1+z^2)^{\floor{\kappa+\frac{1}{2}}} \int_\R \left(\frac{1}{t-z} - \frac{t}{1+t^2}\right) \frac{d\rho(t)}{(1+t^2)^{\floor{\kappa+\frac{1}{2}}}},
\quad z\in\C\backslash\R,
\ee
where $g$ is a real entire function and the nondecreasing function $\rho$, called the spectral function, satisfies 
\begin{align}
\int_\R \frac{d\rho(t)}{(1+t^2)^{\floor{\kappa+\frac{1}{2}}}} & =\infty, & \int_\R \frac{d\rho(t)}{(1+t^2)^{\floor{\kappa+\frac{1}{2}}+1}} & <\infty.
\end{align}

Let us mention that all the above considerations can be easily extended to the case $\kappa<0$. Indeed, note that 
\begin{align}
{\tau}_-:=\sigma_2\tau \sigma_2 = \frac{1}{\I} \sigma_2 \frac{d}{dx} -\frac{\kappa}{x}\sigma_1 + {Q}_-(x),
\end{align}
 where the potentials $Q$ and ${Q}_-$ are connected by
\begin{align}
{Q}_-(x) = \sigma_2Q(x)\sigma_2 = q_{el}(x) I_2 - q_{am}(x)\sigma_1 -  q_{sc}(x)\sigma_3.
\end{align}
If $\kappa<-\frac{1}{2}$, then the corresponding system of fundamental solutions is given by $\Phi_-(z,x)={\I}\sig_2\Phi(z,x)$ and $\Theta_-(z,x)={\I}\sig_2\Theta(z,x)$. Note that we again have 
\begin{align}
W(\Theta_-(z),\Phi_-(z))=W({\I}\sig_2\Theta(z),{\I}\sig_2\Phi(z))=W(\Theta(z),\Phi(z))=1.
\end{align}
Moreover, the corresponding Weyl function $m_-$ coincides with the Weyl function $m$ defined by \eqref{eq:radMfun}.

In the case $\kappa\in (-\frac{1}{2},0)$, we choose another fundamental system of solutions:
\begin{align}\label{eq:FSol-}
\Phi_-(z,x) & ={\I}\cos(\pi \kappa)\sig_2\Theta(z,x), & \Theta_-(z,x) & =\frac{1}{\I\cos(\pi\kappa)}\sig_2\Phi(z,x)
\end{align}
and hence the corresponding  $m$-functions $m$ and ${m}_-$ are related via
\be\label{eq:tim=m}
{m}_-(z)=-\frac{1}{\cos^2(\pi \kappa)\,m(z)},\quad z\in\C\backslash\R.
\ee
Therefore, it suffices to investigate the case $\kappa\ge 0$.

Finally, let us mention that in the case $Q\equiv 0$ and $\kappa+\frac{1}{2}\notin\Z$ one of the Weyl functions and the corresponding spectral function are given by (see \cite[Section 4]{bekt})
\begin{align}\label{eq:unpMrho}
M_\kappa(z) & = \begin{cases}
 \frac{-1}{\cos(\pi\kappa)}\frac{(-z^2)^{\kappa+\frac{1}{2}}}{z}, & \kappa>-\frac{1}{2},\\
 \frac{-1}{\cos(\pi\kappa)}\frac{(-z^2)^{|\kappa|+\frac{1}{2}}}{z}, & \kappa<-\frac{1}{2},
\end{cases},
& \rho_\kappa(t) & = \begin{cases}  \frac{{\rm sign}(t)}{2\kappa+1}|t|^{2\kappa+1}, & \kappa>-\frac{1}{2},\\
\frac{{\rm sign}(t)}{2|\kappa|+1}|t|^{2|\kappa|+1}, & \kappa<-\frac{1}{2}.
\end{cases}
\end{align}

\subsection{The case \texorpdfstring{$|\kappa|<\frac{1}{2}$}{|kappa|<1/2}} We begin with the case
 when $\tau$ is limit circle at $x=0$. Moreover, as pointed out in the previous subsection, it suffices to consider the case when $\kappa\in [0,\frac{1}{2})$. Let $\U(x)=\U(0,x)$ be the fundamental matrix solution  of $\tau y=0$ given by 
\be
\U(x)=\begin{pmatrix}
\theta_1(0,x) & \phi_1(0,x)\\
\theta_2(0,x) & \phi_2(0,x)
\end{pmatrix}.
\ee
Due to our assumption on $\kappa$, $\U^*\U\in L^1((0,a),\C^{2\times 2})$ for all $a>0$. Therefore, we can apply the gauge transformation \eqref{eq:map} in order to transform \eqref{eq:radexprB} to the canonical form \eqref{eq:II.01}. More precisely, the gauge transformation \eqref{eq:map} transforms $\tau y = zy$  to the canonical differential expression 
\begin{align}\label{eq:transf}
J Y' & =z HY, & H(x) & =\U^*(x)\U(x),\quad x\in [0,\infty).
\end{align}
Moreover, it is straightforward to check that the $m$-function for \eqref{eq:transf} defined by \eqref{eq:II.06} coincides with the Weyl function defined by \eqref{eq:radMfun}.

Next notice that by Lemma \ref{lemPRDPhi} we have 
\be\label{eq:M_0}
\U(x)\sim \begin{pmatrix}
C_\kappa^{-1}x^{-\kappa} & o(x^{\kappa})\\
o(x^{-\kappa}) & C_\kappa x^{\kappa}
\end{pmatrix},\quad x\to 0,
\ee
and thus
\be\label{eq:as_H}
{H}(x)\sim \begin{pmatrix}
C_\kappa^{-2}x^{-2\kappa} & o(1)\\
o(1) & C_\kappa^2x^{2\kappa}
\end{pmatrix},\quad x\to 0.
\ee

Applying Theorem \ref{th:} to \eqref{eq:transf}, we can easily obtain the high-energy asymptotic behavior of the $m$-function associated with \eqref{eq:radexprB}. Namely, define 
\[
\eta'(x) = \tr\, {H}(x),\quad x\in[0,\infty),
\]
and set 
\be\label{eq:as_H2}
\tilde{{H}}(x) = \frac{1}{\eta'(\eta^{-1}(x))}{H}(\eta^{-1}(x)),\quad x\in[0,\infty),
\ee
where $\eta^{-1}$ is the inverse function of $\eta$. The Hamiltonian $\tilde{{H}}$ is trace normed and it remains to find the asymptotic behavior of its elements near $x=0$. Noting that
\be
\int_0^x \tilde{{H}}(t) dt = \int_0^{\eta^{-1}(x)}{H}(t)dt
\ee
and 
\be\label{eq:eta02}
\eta^{-1}(x)= (1+\oo(1))\begin{cases}(C_\kappa^2(1-2\kappa)\, x)^{\frac{1}{1-2\kappa}},& \kappa\in(0,1/2),\\
\quad \frac{1}{2}x,&\kappa=0,
\end{cases}
\ee
as $x\to 0$, we get from \eqref{eq:as_H} and \eqref{eq:as_H2} by applying Lemma \ref{lem:a.asymp} that $\tilde{{H}}(x) \rightarrow \frac{1}{2}I_2$ as $x\to 0$ for $\kappa=0$ and 
\be
\int_0^x\tilde{{H}}(t)dt = \begin{pmatrix}
1+\oo(1) & \oo (x^{\frac{1}{1-2\kappa}}) \\
\oo (x^{\frac{1}{1-2\kappa}})  & C_\kappa^{\frac{4}{1-2\kappa}} (1+2\kappa)^{-1}(1-2\kappa)^{\frac{1+2\kappa}{1-2\kappa}}x^{\frac{1+2\kappa}{1-2\kappa}}(1+\oo(1))
\end{pmatrix},
\ee
as $x\to 0$ whenever $\kappa\in (0,1/2)$. 
It remains to apply Corollary \ref{cor:3.08B} with $\alpha = \frac{4\kappa}{1-2\kappa}$. After straightforward calculations we find that
\be\label{eq:4.12}
m(z) =  \frac{-1}{\sin((\kappa+\frac{1}{2})\pi)}\frac{(-z^2)^{\kappa+\frac{1}{2}}}{z}(1+\oo(1)), 
\ee
as $z\to \infty$ in any sector in $\C_+$. Furthermore, Theorem \ref{th:tauber} implies that the corresponding spectral function satisfies
\be
\rho(t) =  \frac{1}{\pi (2\kappa+1)} {\rm sign}(t)|t|^{2\kappa+1}(1+\oo(1)),\quad |t|\to \infty.
\ee
Moreover, in view of \eqref{eq:FSol-} and \eqref{eq:tim=m}, \eqref{eq:4.12} remains true for $\kappa\in (-\frac{1}{2},0)$.

\subsection{The case \texorpdfstring{$\kappa+\frac{1}{2}\notin\Z$}{kappa+1/2 not in Z}}
Now assume that $\kappa>\frac{1}{2}$. Then in view of Theorem 4.5 and Remark 4.7 from \cite{bekt14}, one of the Weyl functions $m$ of the radial Dirac operator \eqref{eq:radexprB} has the form  
\be\label{Msum}
M(z)=z^{2\floor{\kappa+\frac{1}{2}}} M_0(z)
\ee
as $z\to \infty$ in any nonreal sector in $\C$. Here $M_0$ is a Weyl function of the radial Dirac operator with the angular momentum $\kappa_0=\kappa - \floor{\kappa+\frac{1}{2}} \in (-\frac{1}{2},\frac{1}{2})$. 

Summarizing, we get the following result.

\begin{theorem}\label{th:dir_bess}
Let $\tau$ be the radial Dirac differential expression \eqref{eq:radexprB}--\eqref{eq:q_gen} with $\kappa\in\R$, $\kappa+\frac{1}{2}\notin \Z$ and $q_{el}$, $q_{sc}$, $q_{am}\in L^1_{\loc}([0,\infty),\R)$. Then there is a real entire function $g$ such that in any nonreal sector the Weyl function  defined by \eqref{eq:radMfun} satisfies
\be
m(z) - g(z)= M_\kappa(z)(1+\oo(1)),\quad |z|\to \infty.
\ee 
Moreover, the corresponding spectral function satisfies 
\be
\rho(t)= \rho_\kappa(t)(1+\oo(1)),\quad |t|\to \infty.
\ee 
Here $M_\kappa$ and $\rho_\kappa$ are given by \eqref{eq:unpMrho}. 
\end{theorem}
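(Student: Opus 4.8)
The plan is to reduce the general statement to the already-established canonical-system results by carefully tracking how the gauge transformation and the singular factor $x^{\pm\kappa}$ affect the asymptotics of the associated Hamiltonian. First I would observe that, as in Theorem~\ref{th:3.16}, one cannot naively absorb the potential $Q$ into the leading term when $A$ or $C$ are (rapidly or regularly) varying, so the argument must proceed via the explicit reduction $\tilde H(x)=\U(0,x)^*H(x)\U(0,x)$ and the asymptotics of $\U(0,\redot)$ from Lemma~\ref{lemPRDPhi}. The key point is that, by \eqref{eq:estphi}, the fundamental matrix $\U(0,x)$ factors through the diagonal matrix $\mathrm{diag}(C_\kappa^{-1}x^{-\kappa},C_\kappa x^{\kappa})$ times a matrix tending to $I_2$ (in the appropriate sense), so that the gauged Hamiltonian $\tilde H(x)=\U(0,x)^*\U(0,x)$ behaves asymptotically like \eqref{eq:as_H}. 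Thus the problem is reduced, up to lower-order perturbations, to the model Hamiltonian $H_\alpha$ with $\alpha$ related to $\kappa$.

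The next step is to split into cases according to the size of $\kappa$. For $|\kappa|<\tfrac12$ the expression $\tau$ is limit circle at $0$ and $\U^*\U\in L^1_{\mathrm{loc}}$, so the gauge transformation \eqref{eq:map} genuinely lands in a Hamiltonian system \eqref{eq:transf}; one then trace-normalizes via \eqref{eq:as_H2}, computes the reparametrization $\eta^{-1}$ as in \eqref{eq:eta02}, and applies Corollary~\ref{cor:3.08B} (for $\kappa\in(0,\tfrac12)$, with $\alpha=\tfrac{4\kappa}{1-2\kappa}$) or Theorem~\ref{th:4.1} (for $\kappa=0$) to read off \eqref{eq:4.12}. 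This already gives the claimed asymptotics $m(z)=M_\kappa(z)(1+\oo(1))$ for $|\kappa|<\tfrac12$, using \eqref{eq:FSol-}--\eqref{eq:tim=m} to pass from $\kappa\in(0,\tfrac12)$ to $\kappa\in(-\tfrac12,0)$, and then Theorem~\ref{th:tauber} gives the spectral-function asymptotics. I would carry out the verification that the matched constant is indeed $d^{1/(2+\alpha)}$ with $d$ from the lower-right entry of $\int_0^x\tilde H$, simplifying $C_\kappa^{4/(1-2\kappa)}(1+2\kappa)^{-1}(1-2\kappa)^{(1+2\kappa)/(1-2\kappa)}$ together with the value of $d_{1/(2+\alpha)}$ to obtain the $-1/\sin((\kappa+\tfrac12)\pi)$ prefactor; this is the one place where some honest computation with the Gamma-function identities $C_\kappa=\sqrt{\pi}/(2^\kappa\Gamma(\kappa+\tfrac12))$ and the reflection formula is unavoidable.

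For general $\kappa$ with $\kappa+\tfrac12\notin\Z$ and $\kappa>\tfrac12$, the remaining step is to invoke the structural result \eqref{Msum} from \cite{bekt14}: one Weyl function $M$ satisfies $M(z)=z^{2\lfloor\kappa+1/2\rfloor}M_0(z)$ where $M_0$ is a Weyl function of the radial Dirac operator with angular momentum $\kappa_0=\kappa-\lfloor\kappa+\tfrac12\rfloor\in(-\tfrac12,\tfrac12)$. Applying the $|\kappa|<\tfrac12$ case to $M_0$ gives $M_0(z)=M_{\kappa_0}(z)(1+\oo(1))$, and multiplying by $z^{2\lfloor\kappa+1/2\rfloor}$ and checking that $z^{2\lfloor\kappa+1/2\rfloor}M_{\kappa_0}(z)=M_\kappa(z)$ (an elementary identity among the explicit expressions in \eqref{eq:unpMrho}, since $\kappa_0+\tfrac12=\kappa+\tfrac12-\lfloor\kappa+\tfrac12\rfloor$ so the exponents add up correctly and $\cos(\pi\kappa_0)=\pm\cos(\pi\kappa)$) yields $m(z)-g(z)=M_\kappa(z)(1+\oo(1))$ for a real entire $g$ absorbing the polynomial ambiguity in the Weyl function. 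The spectral-function statement then follows from \eqref{Mir} together with Theorem~\ref{th:tauber} applied after dividing out the polynomial weight $(1+z^2)^{\lfloor\kappa+1/2\rfloor}$.

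I expect the main obstacle to be not any single deep step but the bookkeeping of lower-order terms: one must show that the $\oo(\cdot)$ corrections in \eqref{eq:M_0} and the contribution of $Q\not\equiv0$ (entering through $\U(0,x)\to I_2$) are genuinely negligible after the trace-normalization and the nonlinear reparametrization $\eta^{-1}$, in particular that the off-diagonal entry of $\int_0^x\tilde H$ is $\oo(x^{(2+\alpha)/2})$ so that the hypothesis $B(x)=\oo(x^{(2+\alpha)/2})$ of Corollary~\ref{cor:3.08B} is met; this uses the integrability statement $x^{-1}\widetilde\Phi_1,\ x^{-1}\widetilde\Theta_2\in L^1_{\mathrm{loc}}$ from Lemma~\ref{lemPRDPhi} in an essential way. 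A secondary subtlety is ensuring uniformity of all the $\oo(1)$ estimates over closed nonreal sectors, which is automatic from the way Proposition~\ref{prop:dB_corresp} and Theorem~\ref{th:} are stated but should be noted.
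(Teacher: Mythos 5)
Your proposal follows essentially the same route as the paper: the gauge transformation to the canonical system $H(x)=\U(0,x)^*\U(0,x)$ with the asymptotics \eqref{eq:M_0}--\eqref{eq:as_H} from Lemma~\ref{lemPRDPhi}, trace-normalization via \eqref{eq:as_H2}--\eqref{eq:eta02}, Corollary~\ref{cor:3.08B} with $\alpha=\frac{4\kappa}{1-2\kappa}$ (resp.\ the constant case for $\kappa=0$), the reflection \eqref{eq:FSol-}--\eqref{eq:tim=m} for negative $\kappa$, the reduction \eqref{Msum} for $\kappa>\frac12$, and Theorem~\ref{th:tauber} for the spectral function. This is exactly the argument of Sections~\ref{sec:04}.2--\ref{sec:04}.3, so no further comparison is needed.
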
 

\begin{remark}
If $\kappa+\frac{1}{2}\in \N$, then in the unperturbed case, the singular $m$-function is given by (see \cite[Section 4]{bekt})
\begin{align*}
M_\kappa(z) = - \frac{1}{\pi}z^{2\kappa} \log(-z^2),\quad z\in\C\backslash\R. 
%, \\
%\rho_\kappa(t) = \frac{1}{\pi(2\kappa+1)} {\rm sign}(t)|t|^{2\kappa+1},\quad t\in\R,
\end{align*}
The function $M_{\kappa}$ is not a Herglotz--Nevanlinna function and hence we cannot apply our approach directly. Using the gauge transformation \eqref{eq:map}, we can transform the equation $\tau y = zy$ into the first order system 
\be
JY' = z HY,\quad x\in (0,\infty),\qquad H(x) = \begin{pmatrix} x^{-\kappa} & 0 \\ 0 & x^{\kappa} \end{pmatrix},
\ee
which is limit point at both endpoints, at $x=0$ and $x=\infty$. The approach developed in Section \ref{sec:03} can be extended to this case by using the extension of de Branges theory developed by M.\ Langer and H.\ Woracek in \cite{lw1,lw2}. 
\end{remark}

%%%%%%%%%%%%%%%%%%%%%%%%%%%%%%%%%%%%%
\subsection{Spectral asymptotics for radial Schr\"odinger operators}\label{sec:05}
%%%%%%%%%%%%%%%%%%%%%%%%%%%%%%%%%%%%%

There is a straightforward connection with the standard theory for one-dimensional Schr\"odinger operators
(cf.\ \cite{kst2}) if our Dirac operator is supersymmetric, that is, $q_{\rm el}= q_{\rm sc}=0$. In this case, a straightforward computation verifies that 
\be\label{eqhsq}
\tau^2 = \begin{pmatrix}
a_q^- a_q^+  & 0\\ 0 & a_q^+a_q^- 
\end{pmatrix},\qquad a_q^\pm  = \pm \frac{d}{dx} + \frac{\kappa}{x} + q_{\rm am}(x).
\ee
Here, $a_q^+ a_q^-$ and $a_q^-a_q^+$ are generalized one-dimensional Schr\"odinger differential expressions
of the type considered in \cite{ahm,egnt1,egnt2}. If $q_{am}\in AC_{\loc}([0,\infty))$, then
\begin{align}
a_q^- a_q^+ & =-\frac{d^2}{dx^2}+\frac{\kappa(\kappa+1)}{x^2}+q_+, & a_q^+ a_q^- & =-\frac{d^2}{dx^2}+\frac{\kappa(\kappa-1)}{x^2}+q_-,
\end{align}
are the usual radial Schr\"odinger differential expressions where
\be\label{eq:q_pm}
q_\pm(x)=\pm\frac{2\kappa}{x}q_{\rm am}(x)+q_{\rm am}(x)^2\mp q_{\rm am}'(x)\in L^1_{\loc}([0,\infty)).
\ee
If $q_{\rm am}\in L^2_{\loc}([0,\infty))$, then \eqref{eq:q_pm} implies that $q_\pm$ is a $W^{-1,2}_{\loc}([0,\infty))$ distribution. This class of potentials received a lot of attention during the last 15 years (see, e.g., \cite{ahm, sash99}).  However, if $q\notin L^2_{\loc}([0,\infty))$, then $q_\pm$ need not be a distribution and for further details concerning this class of Sturm--Liouville operators, we refer to \cite{ahm, egnt2}. In particular, the singular Weyl--Titchmarsh theory was developed in \cite{egnt2}. 

Let  $\Phi(z,x)$ and $\Theta(z,x)$ be entire solutions to $\tau u=z u$ as in Lemma \ref{lemPRDPhi}. Then $\phi(z^2,x)=\Phi_1(z,x)$ and $\theta(z^2,x)=\Theta_1(z,x)$ are entire solutions to $a_q^- a_q^+ y=z^2 y$ satisfying $\theta(z,x)\phi'(z,x) - \theta(z,x)\phi'(z,x)\equiv 1$.  The singular Weyl function $m_q:\C\backslash\R\to \C$ for $a_q a_q^*$ is defined by the requirement
\be\label{eq:m_q}
\psi(z,\redot) = \theta(z,\redot) + m_q(z)\phi(z,\redot) \in L^2(1,\infty). 
\ee
It follows from \cite[Section 3]{bekt} that $m_q$ is closely connected with singular $m$-function \eqref{eq:radMfun} of the radial Dirac operator:
\be
m(z)=\frac{m_q(z^2)}{z},\quad z\in\C\backslash\R.
\ee
Combining this result with Theorem \ref{th:dir_bess}, we immediately obtain the high-energy asymptotics for the radial Schr\"odinger operator defined by $a_q^- a_q^+$ in $L^2(\R_+)$. Namely, there is a real entire function $\tilde{g}$ such that 
% the  Weyl function $m_q$ defined by \eqref{eq:m_q} satisfies
\be\label{eq:mqass}
m_q(z) - \tilde{g}(z) = \frac{-1}{\cos(\pi\kappa)} (-z)^{\kappa+\frac{1}{2}} (1+\oo(1)),\quad |z|\to \infty,
\ee
in any nonreal sector. 
Moreover, the corresponding spectral measure satisfies
\be\label{eq:rho_qass}
\rho_q(t) = \frac{t^{\kappa+\frac{3}{2}}}{\pi(\kappa+\frac{3}{2})}(1+\oo(1)),\quad t\to+\infty.
\ee

\begin{remark}
For $|\kappa|<\frac{1}{2}$ and $q_{\rm am} \in BV_{\loc}([0,\infty)$, formula \eqref{eq:rho_qass} was first announced in \cite{ka56}. Using a different approach, \eqref{eq:mqass} was established in \cite{kt2} (see also \cite{kst3, kt}) under the assumptions $q_{\rm am}\in AC_{\loc}(0,c)$ and $xq_{\rm am}'\in L^1(0,c)$ for some $c>0$. 
\end{remark}

%%%%%%%%%%%%%%%%%%%%%%%%%%%%%%%%%%
\section{Spectral asymptotics for Krein's strings}\label{sec:06}
%%%%%%%%%%%%%%%%%%%%%%%%%%%%%%%%%%

The main object of this section is the Krein string spectral problem of the form  
\be\label{eq:krein}
-y'' = z\, y\, \omega
\ee
 on an interval $[0,L)$ with $L\in (0,\infty]$ and a non-negative Borel measure $\omega$ on $[0,L)$. 
If $\int_{[0,L)}x^2d\omega(x)<\infty$, then we will impose a Dirichlet-type boundary condition at the right endpoint $L$. When the measure $\omega$ is locally absolutely continuous with respect to the Lebesgue measure on $[0,L)$, then the spectral problem \eqref{eq:krein} is equivalent to the usual Sturm--Liouville spectral problem, $-y''=z\, \Wr' y$, where we set $\Wr(x)=\omega([0,x))$ for all $x\in [0,L)$. 
If $\omega$ has a nontrivial singular component, then there are at least two ways to treat \eqref{eq:krein}: by using integral equations \cite{kk74} or by employing quasi-derivatives \cite{sash99}. In fact, both approaches lead to the same result and for further details we refer to \cite{CHPencil, IndString, MeasureSL}. 

To define the $m$-function for \eqref{eq:krein} we first introduce the fundamental system of solutions $\theta(z,x)$ and $\phi(z,x)$ of \eqref{eq:krein} with the initial conditions 
\begin{align}
\phi(z,0) & = \theta'(z,0-)=0, & \phi'(z,0-) & =\theta(z,0)=1.
\end{align}
It is known \cite{kk74} that such a system exists under the above assumptions on $\omega$ and  that $\phi(z,x)$ as well as $\theta(z,x)$ are entire functions in $z$ for every $x\in [0,L)$. 
 The Weyl function (or the Dirichlet $m$-function) $m_D$ of the string \eqref{eq:krein} is defined as the unique function $m_D:\C\backslash[0,\infty)\to\C$ such that the solution 
\be\label{eq:mDstring}
\psi(z,\redot) = \theta(z,\redot) + m_D(z)\phi(z,\redot) 
\ee
belongs to $L^2([0,L);\omega)$ and satisfies $\psi(z,L-)=0$ if $\int_0^Lx^2d\omega(x)<\infty$ for every $z\in\C\backslash[0,\infty)$. 
It is indeed a Herglotz--Nevanlinna function belonging to the Stieltjes class $(S^{-1})$, that is, it admits the integral representation
\be
m_D(z) = c_1z - c_2 +\int_{(0,\infty)} \left(\frac{1}{t-z} -\frac{1}{t} \right)d\rho_D(t),\quad z\in\C\backslash[0,\infty),
\ee
for some nonnegative constants $c_1$, $c_2\in[0,\infty)$ and a non-decreasing function $\rho_D$ on $(0,\infty)$ for which the integral  
\begin{align}
 \int_{(0,\infty)}\frac{d\rho_D(t)}{t+t^2}
\end{align}
is finite. The function $\rho_D$ is called the spectral function of the string \eqref{eq:krein}.

It is well known that \eqref{eq:krein} can be transformed into a canonical system \eqref{eq:II.01} with a trace normed Hamiltonian (see \cite{gk}). Indeed, set 
\be\label{eq:6.03}
s(x) =x+\Wr(x),\quad x\in [0,L),
\ee 
and denote by $\x$ its generalized inverse, $\x:[0,\infty)\to [0,L]$, defined by
\be\label{eq:6.04}
\x(s) =\begin{cases}
\min\{x\, | \, s(x)=s\}, & s\in [0,s(L-)),\\
L, & s\ge s(L-).
\end{cases}
\ee
The function $\x$ is locally absolutely continuous with $0\le \x'\le 1$ almost everywhere on $[0,\infty)$. With these definitions, the Weyl function $m$ for the canonical system 
\begin{align}\label{eq:cs_kre}
JY' & = z HY, & H(s) & =\begin{pmatrix} 1 - \x'(s) & 0 \\ 0 & \x'(s) \end{pmatrix}.
\end{align}
 is related to $m_D$ via  (see, e.g., \cite[Theorem 4.2]{kww})
\be\label{eq:m=MN}
m(z) z = m_D(z^2),\quad z\in \C_+.
\ee
Using this connection between the $m$-functions and the results from Section \ref{sec:03}, we can easily recover the main results of I.\ S.\ Kac \cite{ka71}, Y.\ Kasahara \cite{ka75} and C.\ Bennewitz \cite{ben} on the high-energy behavior of $m_D$. 

\begin{theorem}[\cite{ka75}]\label{th:askrein}
Let $\omega$ be  a non-negative Borel measure on $[0,L)$ and $m_D$ be the corresponding $m$-function. 
If we denote with $\tilde{f}:[0,\infty)\to[0,\infty)$ the generalized inverse of $x\mapsto 1/(x\Wr(x))$, then the following are equivalent:
\begin{enumerate}[label=\emph{(\roman*)}, ref=(\roman*), leftmargin=*, widest=iii]
\item We have $\Wr\in RV_{1+\alpha}(0)$ with some $\alpha\in (-1,\infty)$. 
\item With $d_{\nu}$ given by \eqref{eq:kac}, we have 
\be
m_D(r\mu) = -d_{\frac{1}{2+\alpha}} (-\mu)^{\frac{1}{2+\alpha}} \tilde{f}(r)^{-1} (1+\oo(1))
\ee
as $r\to\infty$, uniformly for all $\mu$ in compact subsets of $\C_+$.
\end{enumerate}
\end{theorem}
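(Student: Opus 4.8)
The plan is to reduce the Krein string problem to the canonical system case treated in Section~\ref{sec:03} via the transformation \eqref{eq:cs_kre}, and then transfer the regular variation hypothesis on $\Wr$ through the substitution $s = s(x) = x + \Wr(x)$. First I would record that, for the Hamiltonian in \eqref{eq:cs_kre}, the function $C$ from \eqref{eq:ABC} is
\[
C(s) = \int_0^s \x'(u)\,du = \x(s),
\]
since $\x(0)=0$. Thus, after the change of variables $u = s(x)$, one has $\x(s(x)) = x$, and for small $s$ the behavior of $C(s) = \x(s)$ is governed by the inverse of $x \mapsto x + \Wr(x)$. The key observation is that if $\Wr \in RV_{1+\alpha}(0)$, then: when $\alpha > 0$ we have $\Wr(x) = \oo(x)$, so $s(x) = x(1+\oo(1))$ and hence $\x(s) = s(1+\oo(1))$, which means $C \in RV_1(0)$ — this is the borderline case and one must look one order deeper, relating $\x(s) = C(s)$ to $g$ via Definition~\ref{def:g_func}; when $\alpha < 0$ we have $x = \oo(\Wr(x))$, so $s(x) = \Wr(x)(1+\oo(1))$ and $\x(s) \in RV_{\frac{1}{1+|\alpha|}}$ in $s$, which again requires care to match the indexing of Section~\ref{sec:5.2}; and when $\alpha = 0$ the case is covered separately. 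Here $B \equiv 0$ since $H$ is diagonal, so the second condition in \eqref{eq:5.07c} / \eqref{eq:5.07d3} is automatic.

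Rather than splitting into subcases by hand, the cleanest route is to work directly with the function $\tilde f$ defined in the statement as the generalized inverse of $x \mapsto 1/(x\Wr(x))$, and to relate it to the function $g$ attached to the canonical system Hamiltonian via Definition~\ref{def:g_func}. Concretely, $G$ is the inverse of $s \mapsto 1/(s C(s)) = 1/(s\,\x(s))$, and $g(r) = rG(r^2)$. The plan is to show, using the relation $\x(s(x)) = x$ and $s(x) \sim \max(x, \Wr(x))$ — more precisely $s(x) = x + \Wr(x)$ — that $G$ and $\tilde f$ are asymptotically related by a power, i.e. $g(r) = c\,\tilde f(\sqrt{\cdot})$-type identity, and that the regular variation index $1+\alpha$ of $\Wr$ corresponds exactly to the index needed in Corollary~\ref{cor:} (respectively Theorem~\ref{th:4.1} when $\alpha = 0$, or Corollary~\ref{cor:infty}/rapid variation if $\alpha \to \infty$, though the statement only claims $\alpha \in (-1,\infty)$). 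Then I would invoke Corollary~\ref{cor:} (or Theorem~\ref{th:}, after the row-column swap $\tilde H = -JHJ$ that exchanges the roles of $A$ and $C$) to get
\[
m(r\mu) = m_\alpha(\mu)\, g(r)\,(1+\oo(1)), \qquad r \to \infty,
\]
and finally apply the relation \eqref{eq:m=MN}, $m_D(z^2) = z\,m(z)$, together with the explicit form of $m_\alpha$ in \eqref{eq:m_alpha2}, to convert this into the claimed formula for $m_D(r\mu)$. The factor $z$ in \eqref{eq:m=MN} combined with $m_\alpha(\mu) = \frac{1}{\mu}\widetilde m_\alpha(\mu^2)$ and $\widetilde m_\alpha(\zeta) = -d_{\frac{1}{2+\alpha}}(-\zeta)^{\frac{1}{2+\alpha}}$ should produce exactly $-d_{\frac{1}{2+\alpha}}(-\mu)^{\frac{1}{2+\alpha}}\tilde f(r)^{-1}$ after matching $g$ with $\tilde f^{-1}$ up to the square-root rescaling inherent in passing from the $z$-variable to the $\zeta = z^2$-variable.

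For the converse direction, I would run the argument backwards: assuming the asymptotic formula for $m_D$, rewrite it via \eqref{eq:m=MN} as an asymptotic for $m(r\mu)$ of the form $m_\alpha(\mu) g(r)(1+\oo(1))$, apply the ``(ii) $\Rightarrow$ (i)'' implication of Corollary~\ref{cor:} (or Theorem~\ref{th:}) to conclude that $C = \x \in RV_{1+|\alpha|}(0)$ in the $s$-variable, and then transfer this back through $s \mapsto x$ to deduce $\Wr \in RV_{1+\alpha}(0)$. The transfer in both directions relies on Lemma~\ref{lem:a.asymp} from the appendix (composition and inversion properties of regularly varying functions), exactly as in the proof of Theorem~\ref{th:}.

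The main obstacle I anticipate is the bookkeeping around the borderline case $\alpha = 0$ and, more seriously, keeping the indexing of regular variation consistent across the two changes of variable ($x \leftrightarrow s$ via $s = x + \Wr$, and $r \leftrightarrow r^2$ via $\zeta = z^2$) so that the exponent $\frac{1}{2+\alpha}$ and the function $\tilde f$ in the final formula come out with precisely the right normalization; a sign or a factor of two in the wrong place would spoil the match with \eqref{eq:kac}. A secondary subtlety is that $\x(s) = s$ identically for $s \ge s(L-)$ when $L < \infty$, so one should restrict attention to small $s$ (equivalently, the left endpoint behavior of $\Wr$) throughout, which is harmless since only the $x \to 0$ asymptotics of $\Wr$ enter the hypothesis. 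Finally, one should double-check that the case $\Wr \in RV_1(0)$ with $\Wr(x)/x \to \infty$ or $\to 0$ — both genuinely inside $RV_1(0)$ — is handled correctly; here the relevant canonical-system coefficient $C(s) = \x(s)$ still lies in $RV_1(0)$ but the fine asymptotics of $g$ (equivalently $\tilde f$) differ, which is precisely why the statement is phrased in terms of $\tilde f$ rather than an explicit power.
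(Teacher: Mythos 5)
Your proposal follows the paper's proof essentially verbatim: reduce to the canonical system \eqref{eq:cs_kre} via \eqref{eq:m=MN}, note that the relevant integrated entry is $\x$, transfer the regular variation of $\Wr$ through the inversion $s\mapsto\x(s)$, invoke Theorem~\ref{th:}/Corollary~\ref{cor:}, and identify the normalizing function with $\tilde f^{\,-1}$ by repeated use of Lemma~\ref{lem:a.asymp} and the $z\mapsto z^2$ substitution (the paper carries this out only for $\alpha\in(-1,0)$ and declares the remaining cases analogous). The one slip is your handling of $\alpha>0$: there the decisive quantity is not $C(s)=\x(s)\in RV_1(0)$ (feeding that into Definition~\ref{def:g_func} yields an asymptotically constant $g$, which is useless), but rather $A(s)=s-\x(s)=\Wr(\x(s))\in RV_{1+\alpha}(0)$, to which Theorem~\ref{th:} and Definition~\ref{def:f_func} apply directly with $F\sim\tilde f$ --- the switch to Theorem~\ref{th:} that you mention parenthetically is exactly the needed fix, so this is a bookkeeping error rather than a gap in the method.
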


\begin{proof}
We will only consider the case when $\alpha \in (-1,0)$ since the other cases can be proved similarly. Then the function $s$ defined by \eqref{eq:6.03} is also regularly varying at zero with index $1+\alpha\in (0,1)$ and hence, by \cite[Theorem 1.5.12]{bgt}, $\x\in RV_{1+\tilde{\alpha}}(0)$ with $\tilde{\alpha} := \frac{-\alpha}{1+\alpha}\in (0,\infty)$. Therefore, we can apply Theorem \ref{cor:} and thus the $m$-function \eqref{eq:m=MN} satisfies
\[
m(r\mu) = rG(r^2) m_{-\tilde{\alpha}}(\mu)(1+\oo(1)),\quad r\to\infty,
\]
uniformly for all $\mu$ in compact subsets of $\C_+$. Here $G$ is the inverse of $1/(s\x(s))$ and $m_{-\tilde{\alpha}}$ is given by \eqref{eq:m_alpha2}. Using \eqref{eq:m=MN}, we get
\[
m_D(r\mu) = -\widetilde{m}_{-\tilde{\alpha}}(\mu)^{-1} rG(r)(1+\oo(1)),\quad r\to\infty,
\]
where $\widetilde{m}_{-\tilde{\alpha}}$ is given by \eqref{eq:m_alpha2C}. 
It remains to note that $(1+\tilde{\alpha})/(2+\tilde{\alpha}) = 1/(2+\alpha)$ and, moreover, by Lemma \ref{lem:a.asymp}
\[
rG(r) = \tilde{f}(r)^{-1}(1+\oo(1))
\]
as $r\to \infty$. Indeed, by Definition \ref{def:g_func}, $\x\circ G=1/(xG)$. On the other hand, since $\Wr \in RV_{1+\alpha}(0)$ with $\alpha\in(-1,0)$, Lemma \ref{lem:a.asymp} implies that $\x(r) = \Wr^{-1}(r)(1+\oo(1))$ as $r\to\infty$. Define the function $\widetilde{G}$ as the generalized inverse of $1/(x\Wr^{-1})$. Applying Lemma \ref{lem:a.asymp} once again, we see that $G(r)=\widetilde{G}(r)(1+\oo(1))$ as $r\to \infty$. Moreover, 
\[
\tilde{f} = \Wr^{-1}\circ \widetilde{G} \sim \x\circ\widetilde{G} \sim \x\circ G = \frac{1}{rG}
\] 
as $r\to\infty$.
\end{proof}

\begin{corollary}[\cite{ben, ka73, ka75}]\label{cor:askac}
Let $\omega$ be  a non-negative Borel measure on $[0,L)$ and $m_D$ be the corresponding $m$-function. 
Then, for given $\alpha\in(-1,\infty)$, there is a constant $C>0$ such that
\be\label{eq:6.10}
m_D(z) =  {-C}{d_{\frac{1}{2+\alpha}}} (-z)^{\frac{1}{2+\alpha}}(1+\oo(1)) 
\ee
as $z\to \infty$ uniformly in any nonreal sector if and only if 
\be\label{eq:6.11}
\Wr(x) = {C^{2+\alpha}} x^{1+\alpha}(1+\oo(1)), \quad x\to 0.
\ee
\end{corollary}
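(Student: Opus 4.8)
The plan is to deduce Corollary \ref{cor:askac} from Theorem \ref{th:askrein} by specializing to the case of power-type regular variation. First I would show that the hypothesis \eqref{eq:6.11}, namely $\Wr(x) = C^{2+\alpha}x^{1+\alpha}(1+\oo(1))$ as $x\to 0$, is equivalent to the combination of $\Wr\in RV_{1+\alpha}(0)$ together with the additional normalization that the slowly varying part tends to the constant $C^{2+\alpha}$. In particular, \eqref{eq:6.11} implies condition (i) of Theorem \ref{th:askrein}, so that (ii) of that theorem holds with $\tilde{f}$ the generalized inverse of $x\mapsto 1/(x\Wr(x))$. Conversely, if \eqref{eq:6.10} holds uniformly in nonreal sectors, then in particular \eqref{eq:tauberm}-type asymptotics hold along rays $z = r\mu$, and one reads off that condition (ii) of Theorem \ref{th:askrein} holds with $f(r) = C^{-1} r^{\frac{1}{2+\alpha}}(1+\oo(1))$, hence (i) holds, i.e. $\Wr \in RV_{1+\alpha}(0)$.

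The key computational step is to identify $\tilde{f}(r)$ explicitly under the assumption \eqref{eq:6.11}. Since $x\Wr(x) = C^{2+\alpha}x^{2+\alpha}(1+\oo(1))$ as $x\to0$, the function $x\mapsto 1/(x\Wr(x))$ behaves like $C^{-(2+\alpha)}x^{-(2+\alpha)}(1+\oo(1))$ and is regularly varying at $0$ with index $-(2+\alpha)$; by Lemma \ref{lem:a.asymp} in the appendix its generalized inverse satisfies
\be
\tilde{f}(r) = C^{-1} r^{-\frac{1}{2+\alpha}}(1+\oo(1)), \quad r\to\infty.
\ee
Plugging this into statement (ii) of Theorem \ref{th:askrein} yields
\be
m_D(r\mu) = -d_{\frac{1}{2+\alpha}}(-\mu)^{\frac{1}{2+\alpha}}\, C\, r^{\frac{1}{2+\alpha}}(1+\oo(1)) = -C\,d_{\frac{1}{2+\alpha}}(-r\mu)^{\frac{1}{2+\alpha}}(1+\oo(1))
\ee
as $r\to\infty$, which is precisely \eqref{eq:6.10} along the ray $z = r\mu$. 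To upgrade the ray-wise asymptotics (uniform for $\mu$ in compact subsets of $\C_+$) to uniformity in full nonreal sectors as $z\to\infty$, I would use the standard fact that a ratio of Herglotz--Nevanlinna functions with the stated homogeneous limit is automatically locally bounded and equicontinuous on sectors, so that locally uniform convergence of $m_D(r\mu)/(-(-r\mu)^{\frac{1}{2+\alpha}})$ in $\mu$ combined with the scaling $z=r\mu$ gives uniformity on closed nonreal sectors; alternatively one invokes a Vitali-type argument as in the passage following Proposition \ref{prop:dB_corresp}.

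For the converse direction, assuming \eqref{eq:6.10}, I would restrict to rays to obtain $m_D(r\mu) = -C\,d_{\frac{1}{2+\alpha}}(-\mu)^{\frac{1}{2+\alpha}}r^{\frac{1}{2+\alpha}}(1+\oo(1))$, match this against (ii) of Theorem \ref{th:askrein} to conclude $\tilde{f}(r)^{-1} = C\,r^{\frac{1}{2+\alpha}}(1+\oo(1))$, hence $\tilde{f}(r) = C^{-1}r^{-\frac{1}{2+\alpha}}(1+\oo(1))$. Then (i) of Theorem \ref{th:askrein} gives $\Wr\in RV_{1+\alpha}(0)$, and inverting the relation $\tilde{f} = (1/(x\Wr(x)))^{-1}$ via Lemma \ref{lem:a.asymp} once more recovers $x\Wr(x) = C^{2+\alpha}x^{2+\alpha}(1+\oo(1))$, i.e. \eqref{eq:6.11}. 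The main obstacle I anticipate is bookkeeping with the normalizing constants and the various inversions in the regular variation calculus — in particular making sure the constant $C$ (rather than $C^{2+\alpha}$ or $C^{\frac{1}{2+\alpha}}$) appears in the right places — together with the passage from ray asymptotics to sectorial uniformity, which requires the Herglotz structure and not merely regular variation.
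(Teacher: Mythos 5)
Your proposal is correct and follows essentially the same route as the paper: the corollary is obtained from Theorem \ref{th:askrein} by using Lemma \ref{lem:a.asymp} to show that $\tilde{f}(r)=(C^{2+\alpha}r)^{-\frac{1}{2+\alpha}}(1+\oo(1))$ as $r\to\infty$ if and only if \eqref{eq:6.11} holds, and your constant bookkeeping is right. The only superfluous step is the worry about upgrading ray asymptotics to sectorial uniformity: writing $z=r\mu$ with $|\mu|=1$, uniformity for $\mu$ in compact subsets of $\C_+$ is already the same as uniformity in closed nonreal sectors, so no Herglotz or Vitali argument is needed there.
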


\begin{proof}
We only need to mention that, by Lemma \ref{lem:a.asymp}, $\tilde{f}(r) = (C^{2+\alpha}r)^{-\frac{1}{2+\alpha}}(1+\oo(1))$ as $r\to \infty$ if and only if $\Wr$ satisfies \eqref{eq:6.11}.
\end{proof}

\begin{remark}
It was first noted by I.\ S.\ Kac \cite{ka56, ka71,ka72, ka73} (see also \cite[\S 11.6]{kk74}) that the behavior of the spectral function $\rho_D$ of the  string $\eqref{eq:krein}$ at $\infty$ depends on the behavior of its mass distribution function $\Wr$ at $x=0$. Moreover, in \cite{ka73}, I.\ S.\ Kac proved the if part of Corollary \ref{cor:askac} and also found some necessary conditions on $\Wr$ for \eqref{eq:6.11} to hold. The only if part of Theorem \ref{th:askrein} was established independently by Y.\ Kasahara \cite{ka75} and C.\ Bennewitz \cite{ben}. In particular, the proof of Y.\ Kasahara in \cite{ka75} was based on the use of scaling and continuity properties of the Krein correspondence for strings. 
\end{remark}

% % % % % % % % % % % % % % % % % % % % % % %
\section{Spectral asymptotics for generalized indefinite strings}\label{sec:07}
%%%%%%%%%%%%%%%%%%%%%%%%%%%%%%%

Fix some $L\in (0,\infty]$, let $\omega \in H^{-1}_{\loc}([0,L))$ and $\dip$ be a non-negative Borel measure on $[0,L)$. The main object of this section is the spectral problem of the form 
\be\label{eq:indkrein}
-y'' = z\, y\,\omega + z^2\, y\,\dip
\ee
on the interval $[0,L)$. 
For a detailed discussion of the meaning of this spectral problem we refer to \cite{IndString}. In order to define the $m$-function for \eqref{eq:indkrein} we first introduce the fundamental system of solutions $\theta(z,x)$ and $\phi(z,x)$ of \eqref{eq:indkrein} with the initial conditions
\begin{align}
\phi(z,0) & = \theta'(z,0-)=0, & \phi'(z,0-) & =\theta(z,0)=1.
\end{align}
We define the Weyl function $M$ for the problem \eqref{eq:indkrein} by requiring that the solution  
 \begin{align}\label{eq:m_pm}
   \psi(z,\redot) = \theta(z,\redot) + M(z)\,z\phi(z,\redot)  
 \end{align}
 lies in $L^2([0,L);\dip)$ and that $\psi'(z,\redot)$ is square integrable on $[0,L)$ for every $z\in\C\backslash\R$.
% The function $M$ is a Herglotz--Nevanlinna function and thus admits an integral representation of the form  
%\begin{align}\label{eqnWTmIntRep}
% M(z) = c_1 z + c_2 +  \int_\R \left(\frac{1}{t-z} - \frac{t}{1+t^2}\right) d\rho(t), \quad z\in\C\backslash\R, 
%\end{align}
%for some constants $c_1$, $c_2\in\R$ with $c_1\geq0$ 
%and a non-decreasing left-continuous function $\rho$ on $\R$ such that 
%\begin{align}
% \int_\R \frac{d\rho(t)}{1+t^2} & <\infty.
%\end{align}

Let us denote with $\Wr\in L^2_{\loc}([0,L))$ {\em the normalized anti-derivative} of  $\omega$, that is, 
 \begin{align}
    \omega(h) = - \int_0^L \Wr(x)h'(x)dx, \quad h\in H^1_{\mathrm{c}}([0,L)).
 \end{align} 
Note that if $\omega$ is a Borel measure, then $\Wr$ is simply given by the distribution function $\Wr(x)=\omega([0,x))$ for almost all $x\in [0,L)$.
Next define the following function
\begin{equation}\label{eq:7.08}
s(x) = x+\int_0^x \Wr^2(t)dt +\int_{[0,x)}d\dip(t), \quad x\in[0,L),
\end{equation}
as well as its generalized inverse $\x$ on $[0,\infty)$ via 
  \begin{align}\label{eqnIPxitrans}
   \x(s) = \sup\left\lbrace x\in[0,L)\,|\, s(x)\leq s\right\rbrace, \quad s\in[0,\infty). 
  \end{align}
  Let us point out explicitly that $\x(s)=L$ for $s\in[s(L),\infty)$ provided that $s(L)$ is finite.
  On the other side, if $s(L)$ is not finite, then $\x(s)<L$ for every $s\in[0,\infty)$ but $\x(s)\rightarrow L$ as $s\rightarrow\infty$. 
  Since $s$ is strictly increasing, we infer that $\x$ is non-decreasing and satisfies
  \begin{align}\label{eqnXiSigma}
   \x \circ s(x) & = x, \quad x\in[0,L); &  s\circ \x(s) & = \begin{cases} s, & s\in \ran{s},\\ \sup\lbrace t\in\ran{s}\,|\, t\leq s\rbrace, & s\not\in\ran{s}. \end{cases} 
  \end{align}
  Moreover, the function $\x$ is locally absolutely continuous with $0\leq \x'\leq 1$ almost everywhere on $[0,\infty)$. 

 With these definitions, the matrix function $H$ on $[0,\infty)$ given by 
  \begin{align}\label{eqnHamequ}
   H(s) = \begin{pmatrix} 1-\x'(s) & \x'(s)\Wr(\x(s)) \\ \x'(s)\Wr(\x(s)) & \x'(s) \end{pmatrix}, \quad s\in[0,\infty),
  \end{align}
  turns out to be a Hamiltonian and the $m$-function $m$ defined by \eqref{eq:II.06} of the corresponding canonical system is connected with the Weyl function $M$ of the spectral problem \eqref{eq:indkrein} via (for details we refer to the proof of Theorem 6.1 in \cite{IndString})
  \be\label{eq:M=-m}
  M(z) = -m(-z),\quad z\in \C\backslash\R. 
  \ee  

\begin{theorem}\label{th:7.1}
 Let $\omega \in H^{-1}_{\loc}([0,L))$, $\dip$ be a non-negative Borel measure on $[0,L)$ and $M$ be the corresponding Weyl function. Then the following conditions are equivalent:
 \begin{enumerate}[label=\emph{(\roman*)}, ref=(\roman*), leftmargin=*, widest=iii]
 \item For some $a_0\in[0,1)$ and $b_0\in\R$ with $h_0^2 = a_0(1-a_0)-b_0^2\geq0$ one has 
 \begin{align}\label{eq:7.16}
  s(x) \sim \frac{x}{1-a_0} \quad \text{and} \quad  \int_0^{x}\Wr(t)dt  \sim \frac{-b_0}{1-a_0} x, \qquad x\to 0. 
 \end{align}
 \item For some $\xi_0$ in the closure of $\C_+$ one has  
 \be\label{eq:7.15}
 M(z) \rightarrow \zeta_0 
\ee 
 as $ z\to \infty$ in any closed sector in $\C_+$.
\end{enumerate}
 In this case, the quantities $a_0$, $b_0$ and $\zeta_0$ are related as in~\eqref{eq:m_1} and~\eqref{eqn31rel}.
\end{theorem}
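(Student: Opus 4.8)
The strategy is to deduce the statement from Theorem~\ref{th:4.1} by means of the identification $M(z)=-m(-z)$ recorded in~\eqref{eq:M=-m}, where $m$ is the $m$-function of the canonical system with the Hamiltonian $H$ from~\eqref{eqnHamequ}. The first observation is that this $H$ is automatically trace normed, since $\tr\, H(s)=\bigl(1-\x'(s)\bigr)+\x'(s)=1$ for almost every $s\in[0,\infty)$, so Theorem~\ref{th:4.1} applies directly to $m$.

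Next I would translate condition~(ii) into a statement about $m$. Since $m$ is a Herglotz--Nevanlinna function, $m(z)^*=m(z^*)$, and hence $M(z)=-m(-z)=-\overline{m(-z^*)}$. The estimate in~(ii) holds in every closed sector of $\C_+$ precisely when it holds in every closed sector symmetric about the imaginary axis, and the map $z\mapsto -z^*$ leaves each such sector invariant; therefore condition~(ii) for $M$ with limit $\zeta_0$ is equivalent to $m(z)\to -\zeta_0^*$ as $z\to\infty$ in any closed sector of $\C_+$. Writing $c_0:=1-a_0$ and $h_0^2=a_0(1-a_0)-b_0^2$, one has $-\zeta_0^*=(\I h_0+b_0)/(1-a_0)$, which by~\eqref{eq:m_1}--\eqref{eqn31rel} is the constant associated with the trace normed matrix $\bigl(\begin{smallmatrix} a_0 & -b_0 \\ -b_0 & 1-a_0 \end{smallmatrix}\bigr)$. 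By Theorem~\ref{th:4.1}, condition~(ii) is thus equivalent to
\be
 \frac{1}{x}\int_0^x H(t)\,dt \longrightarrow \begin{pmatrix} a_0 & -b_0 \\ -b_0 & 1-a_0 \end{pmatrix}, \qquad x\to 0 .
\ee

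It then remains to evaluate the left-hand side with the help of~\eqref{eqnHamequ}. Using $\x(0)=0$ one gets $\int_0^x\bigl(1-\x'(t)\bigr)dt=x-\x(x)$ and $\int_0^x\x'(t)\,dt=\x(x)$, while the substitution $u=\x(t)$ --- legitimate because $\x$ is locally absolutely continuous with $0\le\x'\le1$ and $\Wr\in L^2_{\loc}\subset L^1_{\loc}$ --- yields $\int_0^x\x'(t)\Wr(\x(t))\,dt=\int_0^{\x(x)}\Wr(u)\,du$. So the displayed convergence is equivalent to $\x(x)/x\to 1-a_0$ together with $x^{-1}\int_0^{\x(x)}\Wr(u)\,du\to -b_0$ as $x\to 0$. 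Finally, since $\x$ is the generalized inverse of the strictly increasing function $s$ with $s(0)=0$, the first relation is equivalent to $s(x)\sim x/(1-a_0)$; granting this, the identity $x^{-1}\int_0^{\x(x)}\Wr=\bigl(\x(x)/x\bigr)\cdot\x(x)^{-1}\int_0^{\x(x)}\Wr$ shows that the second relation is equivalent to $y^{-1}\int_0^{y}\Wr(u)\,du\to -b_0/(1-a_0)$ as $y\to 0$, that is, to $\int_0^x\Wr(t)\,dt\sim -b_0 x/(1-a_0)$. These are precisely the two asymptotics in~\eqref{eq:7.16}, and the relations among $a_0$, $b_0$ and $\zeta_0$ claimed in the theorem have been produced along the way.

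I expect the only genuinely delicate point to be the passage between the generalized inverses $\x$ and $s$ near the origin. In particular one should check that under either hypothesis the measure $\dip$ has no atom at $0$ (if it had, $s$ would jump at $0$, $\x$ would vanish identically near $0$, the limiting Hamiltonian would be the excluded $\bigl(\begin{smallmatrix} 1 & 0 \\ 0 & 0 \end{smallmatrix}\bigr)$, and both~(i) and~(ii) would fail simultaneously), and one should make the equivalence ``$\x(x)/x\to c$ if and only if $s(x)\sim x/c$'' precise, together with the routine justification of the change-of-variables formula for $\Wr\in L^2_{\loc}$. Everything else is bookkeeping with Theorem~\ref{th:4.1} and the explicit formulas~\eqref{eq:m_1} and~\eqref{eqn31rel}.
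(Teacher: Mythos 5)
Your argument is correct and follows essentially the same route as the paper: reduce to Theorem~\ref{th:4.1} via the canonical system with Hamiltonian~\eqref{eqnHamequ} and the identification $M(z)=-m(-z)$, then translate the condition on $\frac{1}{s}\int_0^s H$ into~\eqref{eq:7.16} by inverting $s\leftrightarrow\x$ and a change of variables. The points you flag as delicate (the atom of $\dip$ at $0$ and the equivalence $\x(t)/t\to c\Leftrightarrow s(x)\sim x/c$) are resolved exactly as you indicate; the paper disposes of them by invoking Lemma~\ref{lem:a.asymp} together with $1-a_0\neq0$.
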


\begin{proof}
The proof immediately follows from Theorem \ref{th:4.1}. Indeed, by Theorem \ref{th:4.1}, \eqref{eq:7.15} is equivalent to
\begin{align}\label{eq:7.17}
{\rm x}(s) & = (1-a_0)s +\oo(s), & \int_0^{{\rm x}(s)}\Wr(t)dt & = -b_0 s +\oo(s),
\end{align}
as $s\to 0$. Applying Lemma \ref{lem:a.asymp} and noting that $1-a_0\not=0$, we conclude that $s(x) = \frac{x}{1-a_0}+\oo(x)$ as $x\to 0$. Making a change of variables in the second relation in \eqref{eq:7.17} and using the asymptotic behavior of $s(x)$ at $0$, we arrive at \eqref{eq:7.16}. 
\end{proof}

\begin{theorem}\label{th:7.2}
 Let $\omega \in H^{-1}_{\loc}([0,L))$, $\dip$ be a non-negative Borel measure on $[0,L)$ and $M$ be the corresponding Weyl function. Then, for given $\alpha>0$ and $D>0$, the following conditions are equivalent: 
 \begin{enumerate}[label=\emph{(\roman*)}, ref=(\roman*), leftmargin=*, widest=iii]
 \item We have 
  \be\label{eq:7.19}
   s(x) = D^{\frac{2+\alpha}{1+\alpha}}{x}^{\frac{1}{1+\alpha}} (1+\oo(1))\quad \text{and}\quad \int_0^{x}\Wr(t)dt = \oo(x^{\frac{2+\alpha}{2+2\alpha}}), \qquad x\to 0.
  \ee
 \item With the function $m_\alpha$ given by \eqref{eq:m_alpha2}, we have 
\be\label{eq:7.18}
M(z)= -\frac{D}{m_\alpha(z)}(1+\oo(1))
\ee 
as $ z\to \infty$ uniformly in any closed sector in $\C_+$. 
\end{enumerate}
\end{theorem}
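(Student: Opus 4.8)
The strategy is to pass, via the identity $M(z)=-m(-z)$ from \eqref{eq:M=-m}, to the canonical system with the trace normed Hamiltonian $H$ of \eqref{eqnHamequ} and then invoke Corollary~\ref{cor:3.08B}, transferring the conditions on the coefficients back and forth with the asymptotic‑inverse Lemma~\ref{lem:a.asymp}. Throughout, $A$, $B$, $C$ denote the antiderivatives \eqref{eq:ABC} of the entries of $H$. For the Hamiltonian \eqref{eqnHamequ} I would first note that, since $\x(0)=0$ and $\x$ is locally absolutely continuous with $0\le\x'\le1$, the change of variables $u=\x(t)$ yields
\[
 C(s)=\int_0^s\x'(t)\,dt=\x(s),\qquad B(s)=\int_0^s\x'(t)\,\Wr(\x(t))\,dt=\int_0^{\x(s)}\Wr(u)\,du .
\]

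Next I would rewrite condition~(ii). By \eqref{eq:m_alpha2} one has $m_{-\alpha}=-1/m_\alpha$, so the asserted relation $M(z)=-D\,m_\alpha(z)^{-1}(1+\oo(1))$ reads $M(z)=D\,m_{-\alpha}(z)(1+\oo(1))$. Also from \eqref{eq:m_alpha2}, $m_\alpha(-z)=-m_\alpha(z)$ and hence $m_{-\alpha}(-z)=-m_{-\alpha}(z)$ on $\C\setminus\R$; combining this with $M(z)=-m(-z)$ and the substitution $w=-z$ shows that (ii) is equivalent to $m(w)=D\,m_{-\alpha}(w)(1+\oo(1))$ as $w\to\infty$ in any closed sector of $\C_-$. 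Since $m$ and $m_{-\alpha}$ are Herglotz functions (so on $\C_-$ they are the complex conjugates of their restrictions to $\C_+$) and $D$ is real, this is in turn the same as
\[
 m(z)=m_{-\alpha}(z)\bigl(D+\oo(1)\bigr),\qquad z\to\infty\ \text{in any closed sector of }\C_+,
\]
that is, precisely condition~(ii) of Corollary~\ref{cor:3.08B} with $d^{-1/(2+\alpha)}=D$, i.e.\ $d=D^{-(2+\alpha)}$.

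By Corollary~\ref{cor:3.08B} this is equivalent to $C(s)=s^{1+\alpha}\bigl(D^{-(2+\alpha)}+\oo(1)\bigr)$ and $B(s)=\oo\bigl(s^{\frac{2+\alpha}{2}}\bigr)$ as $s\to0$, hence, by the formulas for $C$ and $B$ above, to
\[
 \x(s)=D^{-(2+\alpha)}s^{1+\alpha}(1+\oo(1)),\qquad \int_0^{\x(s)}\Wr(u)\,du=\oo\bigl(s^{\frac{2+\alpha}{2}}\bigr),\qquad s\to0 .
\]
Since $\x$ is the generalized inverse of the strictly increasing function $s(\redot)$ from \eqref{eq:7.08}, Lemma~\ref{lem:a.asymp} converts the first of these into $s(x)=D^{\frac{2+\alpha}{1+\alpha}}x^{\frac{1}{1+\alpha}}(1+\oo(1))$ as $x\to0$; substituting $x=\x(s)$ in the second and using this asymptotics of $s$ gives $\int_0^x\Wr(u)\,du=\oo\bigl(s(x)^{\frac{2+\alpha}{2}}\bigr)=\oo\bigl(x^{\frac{2+\alpha}{2+2\alpha}}\bigr)$. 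These are exactly the two relations in~(i), and every step above is reversible, so (i) $\Leftrightarrow$ (ii).

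The only place requiring genuine care is the second paragraph: tracking the reflection $z\mapsto-z$ together with the Herglotz symmetry so that the qualifier ``in any closed sector in $\C_+$'' is preserved, and pinning down the precise constant $D=d^{-1/(2+\alpha)}$. A minor additional point is to observe that Lemma~\ref{lem:a.asymp} may be applied even though $s$ need not be continuous (atoms of $\dip$), since the regular‑variation index $\tfrac{1}{1+\alpha}$ is $<1$, so such jumps are negligible. Everything else is routine.
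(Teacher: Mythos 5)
Your proof is correct and follows essentially the same route as the paper: pass to the canonical system via $M(z)=-m(-z)$, identify $C(s)=\x(s)$ and $B(s)=\int_0^{\x(s)}\Wr$, apply the $C$-asymptotics corollary, and invert with Lemma~\ref{lem:a.asymp} plus a change of variables. You simply supply more detail than the paper, which compresses the reflection/oddness bookkeeping into one line and, incidentally, cites Corollary~\ref{cor:3.08} where the statement actually being used is Corollary~\ref{cor:3.08B}, exactly as you correctly invoke it.
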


\begin{proof}
Combining \eqref{eq:M=-m} with Corollary \ref{cor:3.08}, we conclude that \eqref{eq:7.18} is equivalent to
\begin{align*}
\x(s) & = \frac{s^{1+\alpha}}{D^{2+\alpha}}(1+\oo(1)), & \int_0^{\x(s)} \Wr(t)dt & = \oo(s^{\frac{2+\alpha}{2}})
\end{align*}
as $s\to 0$. Applying Lemma \ref{lem:a.asymp} and using a change of variables as in the proof of Theorem \ref{th:7.1}, we arrive at \eqref{eq:7.19}. 
\end{proof}

\appendix

\section{Regularly varying functions}\label{app:osv}

Let us recall the concept of regularly varying functions (see, e.g., \cite{bgt}, \cite{Kor04}).

\begin{definition}\label{def:f_rv}
Let $f:(a,\infty)\to[0,\infty)$ be measurable and eventually positive.
The function $f$ is called slowly varying at $\infty$ if
\be
\lim_{x\to \infty}\frac{f(xt)}{f(x)}=1,\quad t>0.
\ee
The function $f$ is called regularly varying  at $\infty$ with index $\alpha\in\R$ (we shall write $f\in RV_{\alpha}(\infty)$) if 
\be\label{eq:lo}
\lim_{x\to \infty}\frac{f(xt)}{f(x)}=t^\alpha, \quad t>0.
\ee
If the limit in \eqref{eq:lo} equals $\infty$ for all $t>1$, then $f$ is called rapidly varying at $\infty$ (we shall write $f\in RV_{\infty}(\infty)$).

The function $f:(0,b)\to [0,\infty)$ is called regularly (rapidly) varying at $0$ if the function $x\mapsto 1/f(1/x)$ is 
regularly (rapidly) varying at $\infty$. The corresponding notation in this case is $f\in RV_\alpha(0)$ ($f\in RV_{\infty}(0)$).
\end{definition}

Clearly, the class of slowly varying functions coincides with the class of regularly varying functions with index $0$. Note also that a regularly varying function with index $\alpha$ admits the representation $f(x)=x^\alpha \tilde{f}(x)$, where $\tilde{f}$ is a slowly varying function. Moreover, by the Karamata representation theorem \cite[Theorem IV.2.2]{Kor04}, $f$ is slowly varying at infinity precisely if there is $x_0\ge a$ such that
\be\label{eq:a.3}
f(x)=\exp\Big\{\eta(x)+\int_{x_0}^{x} \frac{\varepsilon(t)}{t}dt\Big\},\quad x\ge x_0>0,
\ee 
where $\eta$ is a bounded measurable function on $(x_0,\infty)$ such that $\lim_{x\to \infty}\eta(x)=\eta_0$, and $\varepsilon$ is a continuous function satisfying $\lim_{x\to\infty}\varepsilon(x)=0$. Regularly varying functions can be characterized by their behavior under integration against powers and this is the content of the Karamata characterization theorem (see \cite[\S IV.5]{Kor04}).

Concerning rapidly varying functions, 
it immediately follows from Definition \ref{def:f_rv} that the limit in \eqref{eq:lo} exists and equals $0$ whenever $t\in (0,1)$. Moreover, the convergence is uniform on every interval $(\lambda,\infty)$ and $(0,\lambda^{-1})$, $\lambda>1$. 
It also follows from \cite[Theorems 2.4.7 (ii) and 1.5.6]{bgt} that for every $\gamma>0$ there is a constant $C>0$ such that $f(x)\ge Cx^\gamma$ as $x\to \infty$ ($f(x)\le Cx^\gamma$ as $x\to 0$ if $f\in RV_\infty(0)$). 

We also need the following fact on the asymptotic behavior of functions and their inverses (see \cite[Chapter 1.5.7]{bgt} and \cite{dtj}).

\begin{lemma}\label{lem:a.asymp}
Let $F_0$, $F$ be two strictly increasing and positive functions on $(a,\infty)$. Let also $f_0$, $f$ denote their inverses, respectively. Suppose that $F_0$ and $F$ are regularly varying at infinity with index $\alpha\in(0,\infty]$ and $\lim_{x\to 0} F(x)/F_0(x)=1$. Then $f_0$, $f\in RV_{1/\alpha}(\infty)$ and $\lim_{y\to \infty} f(y)/f_0(y)=1$.
\end{lemma}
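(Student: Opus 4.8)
The plan is to reduce everything to the standard result that if $F$ is regularly varying at infinity with positive index $\alpha$ (possibly $\alpha=\infty$), then its inverse $f$ is regularly varying with index $1/\alpha$; this is exactly \cite[Theorem~1.5.12]{bgt} for finite $\alpha$, and the rapidly varying case $\alpha=\infty$ corresponds to $1/\alpha=0$, i.e.\ $f$ is slowly varying, which is \cite[Theorem~2.4.7]{bgt}. So the only genuinely new content is the comparison statement $\lim_{y\to\infty}f(y)/f_0(y)=1$. (I note in passing that the hypothesis ``$\lim_{x\to 0}$'' in the statement should read ``$\lim_{x\to\infty}$'', since $F_0$, $F$ are defined on $(a,\infty)$; I would carry out the proof with the limit at infinity.)

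First I would dispose of the case $0<\alpha<\infty$. Fix $\eps\in(0,1)$. By the uniform convergence theorem for regularly varying functions \cite[Theorem~1.5.2]{bgt}, $F_0(\lambda x)/F_0(x)\to\lambda^\alpha$ uniformly for $\lambda$ in compact subsets of $(0,\infty)$; combined with $F(x)/F_0(x)\to 1$ this gives, for all large $x$, the two-sided bound $(1-\eps)\lambda^\alpha \le F(\lambda x)/F_0(x)\le(1+\eps)\lambda^\alpha$ on any fixed compact $\lambda$-interval. Now evaluate at $x=f_0(y)$, so $F_0(x)=y$, and choose $\lambda_\pm = (1\pm 2\eps)^{1/\alpha}$: then for large $y$ one gets $F\big(\lambda_- f_0(y)\big)\le y \le F\big(\lambda_+ f_0(y)\big)$. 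Since $F$ is strictly increasing with inverse $f$, applying $f$ yields $\lambda_- f_0(y)\le f(y)\le \lambda_+ f_0(y)$, i.e.\ $(1-2\eps)^{1/\alpha}\le f(y)/f_0(y)\le(1+2\eps)^{1/\alpha}$ for all large $y$. Letting $\eps\downarrow 0$ gives $f(y)/f_0(y)\to 1$, and that $f,f_0\in RV_{1/\alpha}$ is \cite[Theorem~1.5.12]{bgt}.

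For the case $\alpha=\infty$ the same scheme works, but now ``regularly varying with index $\infty$'' means: for every $\lambda>1$, $F_0(\lambda x)/F_0(x)\to\infty$, and for every $\lambda\in(0,1)$ it tends to $0$ (with the convergence uniform on $(\mu,\infty)$ and on $(0,\mu^{-1})$ for $\mu>1$, as recalled in Section~\ref{sec:5.3}). Fix $\lambda>1$. Using $F(x)/F_0(x)\to 1$ together with $F_0(\lambda x)/F_0(x)\to\infty$ and $F_0(\lambda^{-1}x)/F_0(x)\to 0$, we obtain, for all large $x$, $F(\lambda^{-1}x)<F_0(x)<F(\lambda x)$. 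Setting $x=f_0(y)$ and applying $f$ gives $\lambda^{-1}f_0(y)\le f(y)\le \lambda f_0(y)$ for all large $y$. Since $\lambda>1$ was arbitrary, $f(y)/f_0(y)\to 1$; that $f,f_0$ are slowly varying is \cite[Theorem~2.4.7]{bgt}. The main obstacle, such as it is, is purely bookkeeping: making sure the various ``for large $x$'' thresholds can be chosen uniformly so that the substitution $x=f_0(y)$ (legitimate because $f_0(y)\to\infty$) lands in the region where all the estimates hold simultaneously; the monotonicity of $F$ and $f$ then does the rest, with no real analytic difficulty.
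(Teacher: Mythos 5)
Your proof is correct. There is, however, nothing in the paper to compare it against: Lemma \ref{lem:a.asymp} is stated in the appendix without proof, with a pointer to \cite[Chapter 1.5.7]{bgt} and to the paper of Djur\v{c}i\'c, Toga\v{s}ev and Je\v{s}i\'c \cite{dtj} on strong asymptotic equivalence and generalized inverses. Your sandwich argument --- combining $F(x)/F_0(x)\to1$ with $F_0(\lambda x)/F_0(x)\to\lambda^\alpha$ to trap $y=F_0(f_0(y))$ between $F\bigl(\lambda_- f_0(y)\bigr)$ and $F\bigl(\lambda_+ f_0(y)\bigr)$ and then inverting --- is precisely the standard mechanism underlying those references, and in fact you need only pointwise (not uniform) convergence at the two fixed ratios $\lambda_\pm=(1\pm2\eps)^{1/\alpha}$, so the appeal to the uniform convergence theorem is a convenience rather than a necessity. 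Your separate treatment of the case $\alpha=\infty$ is consistent with the properties of rapidly varying functions recalled in Section \ref{sec:5.3}, and you are right that the hypothesis should read $x\to\infty$ rather than $x\to0$. The one point you gesture at rather than settle is the one you name yourself: if $F_0$, $F$ are merely strictly increasing and not continuous, then $f_0$, $f$ must be read as generalized inverses and the identity $F_0(f_0(y))=y$ can fail at jump values, so the inequalities have to be run with one-sided limits; the reference \cite{dtj} exists precisely to cover that situation, and in the paper's applications (integrals of locally integrable functions and their generalized inverses) the issue is harmless. In short, you have supplied the self-contained proof that the paper delegates to the literature.
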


\bigskip
\noindent

{\bf Acknowledgments.}
We thank the referee for the careful reading of our manuscript and several critical remarks.

\end{document}